\theoremstyle{plain}
\newtheorem{thm}{\textbf{Theorem}}[section]
\newtheorem{prop}[thm]{Proposition}
\newtheorem{cor}[thm]{Corollary}
\newtheorem{lem}[thm]{Lemma}
\theoremstyle{definition}
\newtheorem{defi}[thm]{Definition}
\theoremstyle{remark}
\newtheorem{rem}[thm]{Remark}
\newtheorem{ex}[thm]{Example}
\newcommand{\ms}{\mathscr}
\newcommand{\mf}{\mathrm}
\newcommand{\mc}{\mathcal}
\newcommand{\mb}{\mathbf}
\newcommand{\ov}{\overline}
\newcommand{\K}{\mathcal K}
\newcommand{\W}{\mathcal W}
\newcommand{\V}{\mathcal V}
\newcommand{\U}{\mathcal U}
\newcommand{\I}{\mc I}
\renewcommand{\t}{\mathsf{T}}			
\newcommand{\tu}{\mathsf{T_\forall}}			
\newcommand{\tw}{\mathsf{T_W}}
\newcommand{\trap}[1]{\sqrt[\t]{#1}^+}
\newcommand{\trad}[1]{\sqrt[\t]{#1}}
\newcommand{\w}{\mathcal W}
\newcommand{\wk}{\mathcal W_\K}
\newcommand{\uk}{\mathcal U_\K}
\newcommand{\vk}{\mathcal V_\K}
\newcommand{\mo}{\models}
\newcommand{\onto}{\twoheadrightarrow}
\newcommand{\into}{\hookrightarrow}
\renewcommand{\iff}{\Leftrightarrow}
\newcommand{\imp}{\Rightarrow}
\renewcommand{\phi}{\varphi}
\newcommand{\N}{\mathbb N}
\title{Prime types and geometric completeness}
\author{Jean Berthet}
\begin{document}

\begin{abstract}
The geometric form of Hilbert's Nullstellensatz may be understood as a property of ``geometric saturation'' in algebraically closed fields.
 We conceptualise this property in the language of first order logic, following previous approaches and borrowing ideas from classical model theory,
  universal algebra and positive logic. This framework contains a logical equivalent of the algebraic theory of prime and radical ideals, as well
   as the basics of an ``affine algebraic geometry'' in quasivarieties. Hilbert's theorem may then be construed as a model-theoretical property,
    weaker than and equivalent in certain cases to positive model-completeness, and this enables us to geometrically reinterpret model-completeness
     itself. The three notions coincide in the theories of (pure) fields and we apply our results to group-based algebras, which supply a way of dealing with
      certain functional field expansions.
\end{abstract}

\maketitle

 \section*{Introduction}
Several analogues of Hilbert's Nullstellensatz have been proved in various contexts, for example in real algebraic geometry (\cite{BCR}, Theorem 4.1.4)
 or in the study of formally $p$-adic fields (\cite{PR}, Theorem 7.6 and \cite{SK}, Theorem 3). There is also a record of several attempts to
 develop a universal framework unifying these or other results (see for example \cite{CH} III.6, \cite{MK}, \cite{BP}, \cite{WW}). Although they
 exhibit different motivations or points of view, those cited here all use model theory or universal algebra in a broad meaning : the idea
 is that first order logic is suitable for dealing with the different kinds of algebraic structures underlying the Nullstellens\"atze.\\
The work expounded here is extracted and extended from our Ph.D. thesis (\cite{JB}) and finds its place in this general setting.
 We borrow ideas from different approaches to produce a unifying theory in the context of first order logic, building on the
  following considerations. First, G.Cherlin proved that model-completeness readily applies to produce a general Nullstellensatz
   for inductive theories of commutative rings (\cite{CH}, III.6) and McKenna used these ideas to characterise various Nullstellens\"atze in the context
    of model-complete theories of fields (\cite{MK}). Secondly, the stronger notion of positive model-completeness introduced
    by A.MacIntyre has ``geometric'' properties (\cite{MI}) and was reintroduced in another spirit in the language of positive logic
    by I. Ben Yaacov and B. Poizat (\cite{BY} and \cite{BYP}) as an analogue of model-completeness; this logic with its homomorphisms seems to be closer
    to the algebraic basic concepts.
    Thirdly, an inspection of the structure of affine
    coordinate rings in affine algebraic geometry reminded us of the theory of quasivarieties, a piece of universal algebra
     (\cite{HD}, chapter 9). Lastly, the affine objects of algebraic geometry may be defined in the context of equational varieties, a
     special case of quasivarieties (\cite{BP}).\\
Now all this takes place in the realm of first order logic, and it was possible to combine those insights in the following theory we
called ``geometric completeness'' and which consists of a formalisation of the property stated in Hilbert's theorem,
 in connection with the very close notions of model-completeness and positive model-completeness.\\
 After reviewing the basic elements needed
  in model theory, universal algebra and algebraic geometry in part~\ref{BASICS}, we will in part~\ref{FA} build a generalised
  theory of ``logical ideals'' which we call ``$a$-types'' and define the notions of prime and ra\-di\-cal ones as in the ring theory
   underlying algebraic geometry; this culminates in the representation theorem (\ref{rep}), which somehow generalises the representation of
    semiprime rings and makes the algebraic bridge between the quasivarieties of universal algebra and the universal classes of model theoretic algebra.\\
 The $a$-types are then used in part~\ref{GCS} to define a kind of ``formal algebraic geometry'' with ``affine algebraic invariants'' in first order
     structures, quite as in the classical case, after what we introduce through what we call ``geometrically closed structures'', a formal analogue
      of the property expressed in the geometric Nullstellensatz; here formal ideals ($a$-types) as well as first order formulas are used in the expression of
     the property (see Theorem~\ref{CHARGC}); this leans on the two traditions implied, the algebraico-geometric one and the model-theoretic one.\\
 Part~\ref{GC} is the core of the paper, and characterising algebraically closed fields among integral domains and non-trivial rings supplies
 the intuition for connecting the work of section~\ref{GCS} to model theory, in the definition of ``geometrically complete theories''
        and the statement of a ``logical'' Nullstellensatz (Theorem~\ref{POSNS}), the converse of which is tackled under mild assumptions
         in Theorem~\ref{PMCGC}, strongly connecting the theory presented here to positive model-completeness. The remainder of part~\ref{GC}
          explores the connection between geometric completeness, classical model-completeness and Cherlin's original theorem.\\
Part~\ref{APP} deals with some applications, firstly in the particular case of theories of fields, and secondly in the universally
            algebraic setting of group-based algebras, in which the Nullstellensatz and the affine algebraic invariants may be construed in an algebraic
             language, suitable for example for some expansions of fields by additional operators.

 \section{Preliminaries}\label{BASICS} 
We work in and assume standard knowledge of one-sorted first order logic (pre\-di\-ca\-te calculus). The reader is refered to \cite{HD}
or \cite{CHK} for basic model theory. Unless specified, all the concepts and results of this paper are valid in many-sorted first order logic.
Throughout this section, $\ms L$ will denote a
 first order language and as usual, if $\ov x$ is a tuple of variables, $\phi(\ov x)$ will denote an $\ms L$-formula whose free
 variables are \emph{among} $\ov x$ and $|\ov x|$ will stand for the length of $\ov x$. Sometimes we will allow formulas with tuples
 of parameters in a given structure $A$, for which we will use letters of the beginning of the alphabet,
 like in $\phi(\ov a,\ov x)$. We will loosely write $\ms L\sqcup A\sqcup \ov x$ to denote the addition to $\ms L$ of new \emph{constant symbols} for
 naming the elements of $A$ and $\ov x$. We remind the reader that an \emph{elementary class} $\K$ of $\ms L$-structures is the class of models
 of a first order theory $\t$ in $\ms L$. It is noted here $Mod(\t)$ and is naturally closed under isomorphic copies.\\

  Let $\t$ be a first order theory in $\ms L$ and $\K=Mod(\t)$. As we are algebraically-minded here, we will consider not only
  \emph{embeddings} of $\ms L$-structures, but also and mainly \emph{homomorphisms}.
\begin{defi}
 If $A$ and $B$ are $\ms L$-structures and $f:A\to B$ is a map, then we say that $f$ is a \emph{homomorphism},
 if for every \emph{atomic} sentence $\phi(\ov a)$ of $\ms L$ with parameters in $A$ such that $A\mo\phi$,
 one has $B\mo\phi^f$ ($\phi^f$ denotes $\phi(f\ov a)$).\\
Remember that $f$ is an \emph{embedding}, if in addition the converse is true : for every atomic sentence $\phi(\ov a)$ defined in $A$,
 $A\mo\phi$ if and only if $B\mo\phi^f$. If moreover the property is true for \emph{every} sentence $\phi(\ov a)$, then $f$
 is said to be \emph{elementary}.
\end{defi}
   Remember that if $A$ is an $\ms L$-structure, then $D^+A$ denotes the \emph{atomic diagram of $A$}, the set of
  all atomic sentences $\phi$ with parameters in $A$ such that $A\mo\phi$. If $B$ is another $\ms L$-structure, then
   homomorphisms from $A$ into $B$ are in bijection with $\ms L(A)$-expansions $B^*$ of $B$ such that $B^*\mo D^+A$.
    If $f:A\to B$ is a homomorphism and $\phi(\ov a)$ is a sentence with parameters in $A$, we will write $f\mo\phi$
  to mean that $B\mo \phi^f$; in other words the $\ms L(A)$-expansion $(B,f)$ induced by $f$ satisfies $\phi$.
   
 \subsection{Quasivarieties}
This is the occasion to introduce \emph{quasivarieties}, which are elementary classes in which one may carry on
 the basic constructions of algebra, even in a relational language. When we speak about \emph{products} of $\ms L$-structures,
 we allow the \emph{empty} product, which is defined as a \emph{trivial} structure, a structure with one element and such that
 every relational interpretation is full. Such a structure is noted here $\mb 1$.

\begin{defi}\label{QVAR}
$\K$ is a \emph{quasivariety} if it is closed under (any) products and substructures (in particular, if $\mb 1$ is in $\K$).
\end{defi}

Quasivarieties will be used here in connection with a universal version of coordinate algebras of affine algebraic geometry and of
 the representation of semiprime rings (rings with no nilpotent elements). The axiomatisation of quasivarieties is connected to
 certain formulas in the syntax of which are reflected their algebraic properties.

\begin{defi}
An $\ms L$-formula $\phi(\ov x)$ of the form  $\forall \ov y\ \bigwedge\Phi(\ov x,\ov y) \imp \psi(\ov x,\ov y)$, where $\Phi\cup\{\psi\}$ is
 a finite set of atomic formulas, will be said \emph{quasi-algebraic}. The set of quasi-algebraic consequences of $\t$ will be denoted $\tw$.
\end{defi}
Quasi-algebraic formulas are usually called \emph{(basic) universal strict Horn formulas} (see~\cite{HD}, section~9.1). We introduce this
 terminology in order to avoid confusion with Horn formulas in general, and to follow the idea of ''quasi-identities`` introduced by
 B.Plotkin in~\cite{BP} (lecture 3, section 2). Among quasivarieties containing $\K$, we find the class of all $\ms L$-structures,
  and a smallest one, ''generated by $\K$``.

\begin{thm}(\cite{HD}, 9.2.3)\label{GENQV}
The class $Mod(\tw)$, noted here $\wk$, is the smallest quasivariety containing $\K$. It is the smallest class of
 structures containing $\K$ and closed under isomorphic copies, substructures and products.
\end{thm}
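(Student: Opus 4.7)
The plan is to split the proof into two parts: first, verify that $\wk=Mod(\tw)$ is itself a quasivariety containing $\K$, and second, establish minimality among all such classes.

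For the first part, $\K\subseteq\wk$ is immediate since $\tw$ consists of quasi-algebraic consequences of $\t$. Closure under isomorphic copies is automatic. Closure under substructures follows because a quasi-algebraic formula $\forall\ov y\,\bigwedge\Phi(\ov x,\ov y)\imp\psi(\ov x,\ov y)$ is (after outer universal closure over $\ov x$) logically equivalent to a universal formula $\forall\ov x\,\forall\ov y\,(\neg\bigwedge\Phi\vee\psi)$, and universal sentences pass to substructures. Closure under arbitrary products, including the empty product $\mathbf 1$, follows from the componentwise verification that if each factor satisfies $\bigwedge\Phi(\ov a,\ov b)\imp\psi(\ov a,\ov b)$ for every choice of parameters, then so does the product; the empty product satisfies every such formula vacuously.

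For the second (harder) part, let $\mc Q$ be any class containing $\K$ and closed under isomorphic copies, substructures and products, and let $A\mo\tw$. I would show that $A$ embeds into a product of structures of $\K$, which suffices. The key claim is: for every atomic sentence $\phi(\ov a)$ with parameters in $A$ such that $A\not\mo\phi$, there exist $B_\phi\in\K$ and a homomorphism $f_\phi:A\to B_\phi$ with $B_\phi\not\mo\phi^{f_\phi}$. Suppose this fails for some $\phi(\ov a)$; then every $\ms L(A)$-expansion of a $\K$-structure satisfying $D^+A$ also satisfies $\phi(\ov a)$, i.e.\ $\t\cup D^+A\mo\phi(\ov a)$. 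By compactness there is a finite $\Phi_0(\ov a,\ov b)\subseteq D^+A$ (where $\ov b$ lists the extra parameters involved) with $\t\mo\forall\ov x\,\forall\ov y\,\bigl(\bigwedge\Phi_0(\ov x,\ov y)\imp\phi(\ov x)\bigr)$. The universal closure of this implication is a quasi-algebraic sentence, hence belongs to $\tw$; since $A\mo\tw$ and $A\mo\bigwedge\Phi_0(\ov a,\ov b)$, we obtain $A\mo\phi(\ov a)$, contradicting the choice of $\phi$.

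Granted the claim, form $B=\prod_\phi B_\phi$ over all atomic $\phi$ failing in $A$ (including atomic equalities $a_1=a_2$ for distinct $a_1,a_2\in A$, which handles injectivity); the tuple $(f_\phi)_\phi$ yields a homomorphism $f:A\to B$. By construction $B\mo\phi^f$ whenever $A\mo\phi$ (atomic, since each $f_\phi$ is a homomorphism), and $B\not\mo\phi^f$ whenever $A\not\mo\phi$ (atomic, witnessed in the $\phi$-coordinate). Hence $f$ is an embedding, so $A$ is isomorphic to a substructure of $B\in\mc Q$, giving $A\in\mc Q$. The case where no atomic sentence fails in $A$ forces $A$ to be trivial, and $\mathbf 1\in\mc Q$ as an empty product.

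The main technical obstacle is the compactness step: one must carefully partition the parameters appearing in $\Phi_0$ into the "exposed" tuple $\ov a$ (destined to become the free $\ov x$) and the "auxiliary" parameters $\ov b$ (destined to be quantified inside as $\ov y$), and verify that the resulting formula is of the shape required by the definition of quasi-algebraic. Everything else is a routine diagram-chasing argument, modulo the harmless bookkeeping needed in the many-sorted setting mentioned in Section~\ref{BASICS}.
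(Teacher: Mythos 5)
Your proof is correct. Note that the paper offers no argument for this statement at all --- it is quoted from Hodges, 9.2.3 --- so there is no in-paper proof to compare against; what you give is essentially the standard textbook argument. The comparison worth making is with Lemma~\ref{REPQV}: the hard half of your minimality step (every model of $\tw$ embeds into a product of models of $\t$, obtained by compactness from $D^+A$ and the failing atomic sentences) is exactly the content of that lemma, which the paper instead derives \emph{from} Theorem~\ref{GENQV} by iterating the closure operations. Your route is thus slightly more economical for the paper's purposes, since Lemma~\ref{REPQV} (and, with the surjectivity of the projections onto the quotients, Corollary~\ref{REP}) would fall out of the same construction. The one delicate point is the one you flag: after extracting a finite $\Phi_0\subseteq D^+A$ by compactness, the partition of its parameters into the exposed tuple $\ov x$ and the auxiliary tuple $\ov y$ is harmless because both end up universally quantified in the resulting quasi-algebraic sentence, so any split yields a sentence of the required shape. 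You also correctly isolate the two degenerate cases: if no atomic sentence fails in $A$ then $A\simeq\mb 1$, which lies in every quasivariety as the empty product; and if $\t\cup D^+A$ happens to be inconsistent the compactness step still goes through and simply produces the same contradiction.
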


\begin{ex}\label{EX1}
The theory of semiprime rings (without nilpotent elements) is quasi-algebraic in $\langle +,-,\times, 0,1\rangle$.
 The class $\W$ of its models is the quasivariety generated by (algebraically closed) fields, or even integral domains.\\
Analogously, call a ring \emph{real} if it satisfies the axioms $\forall x_1,\ldots,x_n\ [\bigwedge_{i=1}^n (x_i^2=0) \imp (x_1=0)]$ and
 those for semisimple rings in the same language. The class of real rings is then the quasivariety generated by the class of
  (subrings of) real (closed) fields  (see \cite{BCR}, section~1.1).\\
Partially ordered sets in the language $\langle \leq\rangle$ form a quasivariety, but not in $\langle < \rangle$.\\
  \end{ex}

\subsection{Universal classes}
Quasivarieties are special cases of \emph{universal (elementary) classes}.
\begin{defi}
 $\K$ is a \emph{universal} class if it is closed under substructures.
\end{defi}

As for quasivarieties, universal classes are axiomatised by their consequences of a certain form.
\begin{defi}
 An $\ms L$-formula is \emph{(basic) universal} if it is of the form \newline $\forall\ov y\ [\bigwedge\Phi(\ov x,\ov y)
\imp\bigvee\Psi(\ov x,\ov y)]$, where $\Phi$ and $\Psi$ are finite sets of atomic formulas. The set of (basic) universal consequences
 of $\t$ is noted $\tu$.
\end{defi}

The following characterisation is well-known in model theory and analogous to Theorem~\ref{GENQV}.
\begin{thm}
The class $Mod(\tu)$, noted here $\uk$, is the smallest universal class containing $\K$. If $A$ is an $\ms L$-strucure, then 
$A\in\uk$ if and only if $A$ embeds into a model of $\t$.
\end{thm}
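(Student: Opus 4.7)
The plan is to prove the embedding characterisation (second half) first, and then deduce the structural first half almost for free. One direction of the characterisation is easy: if $A$ embeds into some $M\mo\t$, then identifying $A$ with a substructure of $M$ and using the standard fact that basic universal formulas are preserved downwards under substructures yields $A\mo\tu$, hence $A\in\uk$. In particular $\K\subseteq\uk$, which disposes of the ``containing $\K$'' clause of the first half as well.

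The converse is the main obstacle and will be handled via compactness. Given $A\mo\tu$, I would consider the theory $T^{*}=\t\cup D^{+}A\cup D^{-}A$ in the expanded language $\ms L\sqcup A$, where $D^{-}A$ denotes the set of negated atomic sentences true in $A$. By the correspondence noted in the preliminaries between homomorphisms out of $A$ and $\ms L(A)$-expansions satisfying $D^{+}A$, any model of $T^{*}$ encodes an embedding of $A$ into a model of $\t$. Suppose $T^{*}$ were inconsistent. Compactness would then furnish finitely many literals $\phi_{1}(\ov a),\ldots,\phi_{m}(\ov a),\neg\chi_{1}(\ov a),\ldots,\neg\chi_{n}(\ov a)$ from the diagram, with the $\phi_{i},\chi_{j}$ atomic $\ms L$-formulas and $\ov a$ a common tuple of parameters from $A$, inconsistent with $\t$. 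Because the constants naming $\ov a$ do not occur in $\t$, universal generalisation gives
\[
\t\vdash\forall\ov x\,\bigl[\,\bigwedge_{i}\phi_{i}(\ov x)\imp\bigvee_{j}\chi_{j}(\ov x)\bigr],
\]
a basic universal consequence of $\t$, hence a member of $\tu$. But $A\mo\phi_{i}(\ov a)$ for every $i$ (as $\phi_{i}(\ov a)\in D^{+}A$) while $A\not\mo\chi_{j}(\ov a)$ for every $j$ (as $\neg\chi_{j}(\ov a)\in D^{-}A$), contradicting $A\mo\tu$ applied at $\ov a$. Consistency of $T^{*}$ therefore holds, and any of its models supplies the desired embedding.

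With the characterisation established, the first half follows cleanly. Closure of $\uk$ under substructures is immediate from transitivity of embeddings: a substructure of $A\into M\mo\t$ also embeds into $M$. Thus $\uk$ is a universal class containing $\K$. For minimality, any universal class $\mc C$ containing $\K$ is closed under isomorphic copies (being elementary) and under substructures, so any $A\in\uk$, being isomorphic to a substructure of some $M\in\K\subseteq\mc C$, must already lie in $\mc C$. Hence $\uk\subseteq\mc C$, completing the proof.
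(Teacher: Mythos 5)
Your proof is correct. The paper itself gives no argument here --- it states the result as ``well-known in model theory'' and moves on --- and what you have written is precisely the standard proof it is implicitly invoking: downward preservation of basic universal sentences for one direction, consistency of $\t\cup D^{+}A\cup D^{-}A$ via compactness and generalisation on the new constants for the other, with the minimality clause falling out of the embedding characterisation. The only point worth making explicit is the degenerate case $n=0$ in your compactness step, where the disjunction $\bigvee_{j}\chi_{j}$ is empty; this is still covered by the paper's definition of a basic universal formula (take $\Psi=\emptyset$, so the conclusion is $\bot$), so nothing breaks.
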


\begin{rem}
It should be obvious that the quasivariety $\wk$ generated by $\K$ is also generated by $\uk$. Formally, we have $\wk=\W_{\uk}$.
\end{rem}
  
Section~\ref{PRT} will expound a structural relationship between universal classes and quasivarieties, and we will characterise $\wk$
 in Corollary~\ref{REP} by the following operation, under which quasivarieties are also closed.

\begin{defi}
 If $A$ is a structure, $A$ is a \emph{subdirect product} of objects in $\K$, if it is
 isomorphic to a substructure $B$ of a product of objets of $\K$, such that every projection of $B$ onto the components of the
 product is surjective.
\end{defi}
Remark that any subdirect product of structures of a universal class $\U$, is in $\W_\U$.

\begin{ex}\label{EX2}
 Going back to examples~\ref{EX1}, integral domains make up the universal class generated by fields, and real integral domains
  (real and integral) the universal class generated by real fields. The sentence saying that an order is total
   is universal, and lattice-ordered subrings of real closed fields are the totally ordered integral domains.\\
 Every semiprime ring is representable as a subdirect product of integral domains; this is true for real rings (see~\ref{EX1})
  and for $f$-rings (see example~\ref{EX5}), and will be systematised in Theorem~\ref{rep}.
\end{ex}

Analogous to universal formulas are $h$-universal ones, which characterise which structures are the domain of a homomorphism into a
model of $\t$ (see~\cite{BYP} or~\cite{BY}).
 \begin{defi}
  A formula $\phi(\ov x)$ is $h$-universal if it is of the form $\forall\ov y\ \neg\bigwedge\Phi(\ov x,\ov y)$, where $\Phi$
   is a finite set of atomic formulas. The set of $h$-universal consequences of $\t$ is $\t_u$.
 \end{defi}
A syntactical characterisation of homomorphisms into a model of $\t$ is the following.
\begin{prop}
 An $\ms L$-structure $A$ is a model of $\t_u$ if and only if there exists a homomorphism from $A$ into a model of $\t$.
\end{prop}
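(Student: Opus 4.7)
The proof splits naturally into the two implications, both turning on the elementary fact that homomorphisms are exactly the maps preserving the atomic diagram.

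For the easy direction, suppose $f:A\to B$ is a homomorphism into a model $B\mo\t$, and let $\phi(\ov x)=\forall\ov y\ \neg\bigwedge\Phi(\ov x,\ov y)\in\t_u$. Pick any tuple $\ov a$ in $A$; if some $\ov b$ in $A$ witnessed $A\mo\bigwedge\Phi(\ov a,\ov b)$, then since every conjunct is atomic and homomorphisms preserve atomic sentences, we would obtain $B\mo\bigwedge\Phi(f\ov a,f\ov b)$, contradicting $B\mo\phi(f\ov a)$. Hence $A\mo\phi(\ov a)$, and $A\mo\t_u$.

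For the converse, I would use the standard diagram argument. Assume $A\mo\t_u$, and consider the $\ms L\sqcup A$-theory $\t\cup D^+A$. If this theory has a model $B^*$, then its $\ms L$-reduct $B$ is a model of $\t$, and the map $a\mapsto a^{B^*}$ is a homomorphism from $A$ to $B$, exactly by the correspondence between homomorphisms and $\ms L(A)$-expansions satisfying $D^+A$ recalled in the introduction. So it suffices to show $\t\cup D^+A$ is consistent.

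Suppose for a contradiction that it is not. By compactness, there are finitely many atomic sentences $\psi_1(\ov a),\ldots,\psi_n(\ov a)\in D^+A$ (with $\ov a$ listing all parameters occurring) such that $\t\cup\{\psi_1(\ov a),\ldots,\psi_n(\ov a)\}$ is inconsistent. Since the elements of $\ov a$ are new constant symbols absent from $\t$, this yields
\[
\t\mo\forall\ov x\ \neg\bigwedge_{i=1}^{n}\psi_i(\ov x),
\]
which is an $h$-universal sentence (with empty ``free'' tuple in the notation of the definition), hence belongs to $\t_u$. But $A\mo\t_u$ would then force $A\mo\neg\bigwedge_i\psi_i(\ov a)$, contradicting the fact that each $\psi_i(\ov a)$ lies in $D^+A$ and is therefore true in $A$.

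The only genuinely delicate point is the bookkeeping in the converse: one must ensure that all parameters occurring in the finitely many selected atomic sentences are listed into a single tuple $\ov x$ so that the resulting universal closure genuinely is $h$-universal. Once this is set up cleanly, the result is immediate from compactness.
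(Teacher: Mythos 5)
Your proof is correct. The paper states this proposition without proof (it is the standard characterisation of $h$-universal consequences from positive logic, cf.\ Ben Yaacov--Poizat), and your argument --- preservation of atomic sentences for the easy direction, and the compactness/diagram argument with the observation that the universal closure of $\neg\bigwedge_i\psi_i$ is itself $h$-universal for the converse --- is exactly the expected one.
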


\subsection{Presentations and functional existentiation}\label{CAR}
Algebraic presentations, as in groups and rings, generalise to quasivarieties.
 \begin{defi}
 An \emph{$\ms L$-presentation} is a couple $(X,P)$, where $X$ is a set of additional constants and $P$ is a set of atomic
  sentences in $\ms L\sqcup X$. A \emph{model} of the presentation $(X,P)$ \emph{in $\K$} is an $\ms L(X)$-structure $A$ which
  satisfies $P$ and whose $\ms L$-reduct $A_0$ is in $\K$.
\end{defi}

\begin{prop}(\cite{HD}, 9.2.2)
 If $\t$ is quasi-algebraic, then every presentation $(X,P)$ has an initial model in $\K$, generated by the interpretation of $X$.
 Such a structure is \emph{presented by $(X,P)$ in $\K$}.
\end{prop}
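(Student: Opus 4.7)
The plan is to adapt the standard product construction of universal algebra. Since $\t$ is quasi-algebraic, Theorem~\ref{GENQV} guarantees that $\K = \wk$ is closed under both products and substructures, and these are exactly the two operations required.

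First, I would observe that $(X,P)$ always has at least one model in $\K$: the trivial structure $\mb 1$, with every $x \in X$ interpreted as the unique element, satisfies every atomic $\ms L\sqcup X$-sentence and in particular $P$, and $\mb 1 \in \K$. Next, for any model $(A, \iota_A)$ of $(X,P)$ in $\K$, the $\ms L(X)$-substructure $\langle \iota_A X\rangle$ is again in $\K$ (closure under substructures) and still satisfies $P$ (atomic formulas persist downwards); moreover, any such $X$-generated model has cardinality at most $\kappa = |\ms L \sqcup X| + \aleph_0$. This allows me to fix a set $\F$ of representatives of the isomorphism classes of $X$-generated models of $(X,P)$ in $\K$. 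Now form $B := \prod_{A \in \F} A$, with each $x\in X$ interpreted diagonally. Its $\ms L$-reduct lies in $\K$ by closure under products, and $B \mo P$ because atomic sentences are preserved (and reflected) by products. Setting $A_* := \langle X^B \rangle \subseteq B$ yields, again by closure under substructures, a model of $(X,P)$ in $\K$ which is generated by $X$.

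To see that $A_*$ is initial, take any model $(A, \iota_A)$ of $(X,P)$ in $\K$: replacing $A$ by $\langle \iota_A X\rangle$ gives an isomorphic copy of some $A_i \in \F$, and the projection $\pi_i : B \to A_i$ restricts to a homomorphism $A_* \to A_i$ sending each diagonal $x^B$ to $\iota_A(x)$. Uniqueness of such a homomorphism is automatic since $X$ generates $A_*$. The main obstacle is the set-theoretic one: the collection of all models of $(X,P)$ is a priori a proper class, and this is circumvented by reducing to $X$-generated models via the cardinality bound $\kappa$. Past this step, the argument is essentially bookkeeping with the two defining closure properties of a quasivariety, and an alternative formulation as a quotient of the absolutely free $\ms L$-term algebra on $X$ by the smallest congruence forced by $P$ and $\tw$ would give the same object.
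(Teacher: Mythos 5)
Your argument is correct: the paper offers no proof of this proposition, citing it directly from \cite{HD} (9.2.2), and your construction --- cut each model of $(X,P)$ down to its $X$-generated substructure, bound the cardinality of such models to extract a set $\F$ of representatives, form their product with $X$ interpreted diagonally, and pass to the substructure generated by $X$ --- is precisely the standard argument given there, with the two closure properties of a quasivariety used exactly where they are needed and the set-theoretic issue handled correctly. The alternative you sketch in your last sentence, the quotient of the term algebra on $\ms L\sqcup X$ by the smallest closed $a$-type containing $P$ and forced by $\mathsf{T_W}$, is the route the paper itself takes later in Corollary~\ref{PRESWK}.
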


If $\K$ is a quasivariety, $A$ is a structure in $\K$ and $X$ is a set, the presentation $(X,\emptyset)$ has an initial
 model $A[X]$ in $\K$, which is the analog of a polynomial algebra. In fact, the quotients introduced in section~\ref{FA} may be used
  to explicitly construct those $A[X]$.
Quasi-algebraic theories are special cases of \emph{limit theories}, which employ a little bit of existential quantification
 (\cite{CO2}, section 2). We will use here the underlying idea of existentiation on functional variables in order to define the
 category of affine algebraic varieties in~\ref{AFFQU}.
 
 \begin{defi}
  If $\phi(\ov x,\ov y)$ is a conjunction of atomic formulas such that $\t\mo\forall\ov x,\ov y,\ov y'\ [\phi(\ov x,\ov y)\wedge\phi
  (\ov x,\ov y')\imp \ov y=\ov y']$, then $\phi$ will be called \emph{functional in $\ov x$ (for $\t$)}.
 \end{defi}
 We will be interested in primitive positive formulas where existential quantification is allowed only on functional variables, in order
  to define morphisms of affine varieties in section~\ref{AFFQU}. The following lemma is an easy property of quasivarieties and a version of
  \cite{CO}, II 4.5.
    \begin{lem}\label{QTERMS}
Let $\phi(\ov x)=\exists\ov y\ \psi(\ov x,\ov y)$, where $\psi$ is a conjunction of atomic formulas, functional in $\ov y$ for $\t$.
  If $\t$ is quasi-algebraic and $\t\mo\forall\ov x\ [\phi(\ov x)\iff \theta(\ov x)]$ for a conjunction $\theta$ of atomic formulas,
 then there exist $|\ov y|$ terms $t_j(\ov x)$ such that $\t\mo\forall \ov x\ [\theta(\ov x)\iff\psi(\ov x,\ov t(\ov x))]$.
 \end{lem}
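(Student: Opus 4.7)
My plan is to exploit the initial-model construction granted by the quasi-algebraicity of $\t$. Treating $\ov x$ as a tuple of new constants and the conjunction $\theta(\ov x)$ as a finite set of atomic sentences in $\ms L\sqcup\ov x$, the presentation $(\ov x,\theta(\ov x))$ admits an initial model $A$ in $\K$ by the proposition cited from \cite{HD}~9.2.2; moreover $A$ is generated by the interpretation of $\ov x$, so every element of $A$ is of the form $t^A(\ov x)$ for some $\ms L$-term $t$.

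Since $A\mo\theta(\ov x)$ by construction and $\t\mo\phi\iff\theta$ by hypothesis, we have $A\mo\phi(\ov x)$, hence a tuple $\ov b\in A$ witnessing $A\mo\psi(\ov x,\ov b)$, unique by the functionality of $\psi$. Writing each $b_j=t_j^A(\ov x)$ with an $\ms L$-term $t_j$ yields the candidate tuple $\ov t$ of length $|\ov y|$, with $A\mo\psi(\ov x,\ov t(\ov x))$. To propagate this identity to $\t$, let $B\mo\t$ and pick $\ov c\in B$ with $B\mo\theta(\ov c)$; the $\ms L(\ov x)$-expansion $(B,\ov c)$ is then a model of the presentation $(\ov x,\theta(\ov x))$, so by initiality there is a homomorphism $h\colon A\to B$ sending $\ov x\mapsto\ov c$. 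As $\psi(\ov x,\ov t(\ov x))$ is a conjunction of atomic formulas, $h$ preserves it and we get $B\mo\psi(\ov c,\ov t(\ov c))$. This establishes $\t\mo\forall\ov x\,[\theta(\ov x)\imp\psi(\ov x,\ov t(\ov x))]$. The reverse implication is purely logical: $\psi(\ov x,\ov t(\ov x))\imp\exists\ov y\ \psi(\ov x,\ov y)=\phi(\ov x)$ by existential introduction, and $\phi\imp\theta$ is the hypothesis.

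I do not foresee any serious obstacle. The single point deserving attention is the fact that the initial model $A$ is generated by the constants $\ov x$ of the presentation; this is crucial for extracting the terms $t_j$, but it is part of the standard construction of initial models of quasi-algebraic theories as quotients of the term algebra on the new constants modulo the congruence cut out by $\tw$ together with the atomic sentences of the presentation.
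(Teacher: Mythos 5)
Your proposal is correct and follows essentially the same route as the paper: take the initial model $A$ of the presentation $(\ov x,\theta(\ov x))$ in $\K$, use the equivalence with $\phi$ to find a witness tuple, write it with terms since $A$ is generated by $\ov x$, and push the atomic conjunction $\psi(\ov x,\ov t(\ov x))$ into any model $B\mo\t\cup\{\theta(\ov b)\}$ via the initial homomorphism. The only difference is presentational: you spell out more explicitly why $A\mo\phi(\ov x)$ and why the reverse implication is trivial, which the paper leaves implicit.
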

\begin{proof}
 Let $\ov x=x_1,\ldots,x_n$ and $A$ be an initial model of $(\{x_1,\ldots,x_n\},\theta(\ov x))$ in $\K=Mod(\t)$. By hypothesis,
  there are elements $\ov a$ in $A$ such that $A\mo\psi(\ov x^{A},\ov a)$. As $A$ is generated by the interpretations
   of $\ov x$, this means there are terms $\ov t(\ov x)$ such that $\ov a=\ov t^{A}(\ov x^A)$.
 Now let $B\mo\t$ : if $B\mo\theta(\ov b)$, there is a unique morphism $f:A\to B$, $\ov x\mapsto \ov b$,
     so $B\mo\psi(\ov b,\ov t^B(\ov b))$, as $\ov t^B(\ov b)=f(\ov t^A(\ov x^A))$. If $B\mo\psi(\ov b,\ov t(\ov b))$, then obviously
     $B\mo\theta(\ov b)$.
\end{proof}

\subsection{Varieties and group-based algebras}\label{VAR}
 
 \paragraph{Equational varieties}
 Among quasivarieties we find \emph{varieties} which theorise the notion of ``algebraic structure''. We suppose here
 that $\ms L$ is an equational language, i.e. that it contains no relation symbol. Let $\K$ be a class of $\ms L$-structures.
\begin{defi}
A formula is \emph{algebraic} if it is of the form $\forall\ov x\ \phi(\ov x)$, where $\phi$ is atomic. The class $\K$
is a \emph{(equational) variety (in $\ms L$)} if it as axiomatised by a set of algebraic sentences.
\end{defi}

Varieties are characterised by a theorem of G.Birkhoff (\cite{HD}, 9.2.8).
\begin{thm}
Equivalently, $\K$ is a \emph{(equational) variety} if and only if it is closed under (any) products, substructures and
 homomorphic images.
\end{thm}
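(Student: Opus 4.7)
The plan is to handle each direction separately. The forward implication is immediate: in an equational language an atomic formula is an equation $s=t$, and such equations are preserved by homomorphisms (by the very definition of homomorphism, applied to atomic formulas), by substructures (atomic sentences are absolute between a substructure and the ambient structure), and by products (evaluation of terms in a product is coordinatewise). Hence any class axiomatised by algebraic sentences is closed under $H$, $S$, and $P$.

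For the converse, suppose $\K$ is closed under $H$, $S$, $P$, and let $E$ be the set of all algebraic sentences true in every member of $\K$, so that $\K\subseteq Mod(E)$. Fix $A\mo E$; my goal is to produce a surjective homomorphism onto $A$ from a structure of $\K$, so that closure under $H$ forces $A\in \K$. The key construction is a ``free $\K$-algebra over $A$''. Setting $X:=A$ and fixing a cardinal bound $\kappa=|A|+|\ms L|+\aleph_0$, let $I$ be a set of representatives of pairs $(B,h)$ where $B\in\K$ satisfies $|B|\leq\kappa$ and $h:X\to B$ is a map. Define $G$ to be the substructure of $\prod_{(B,h)\in I}B$ generated by the diagonal $\delta:X\to\prod B$, $\delta(x)=(h(x))_{(B,h)}$. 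Closure under products and under substructures gives $G\in\K$.

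Next I would show that the identity map $X\to A$ extends to a (surjective) homomorphism $G\onto A$. Let $F_{\ms L}(X)$ denote the absolutely free $\ms L$-term algebra on $X$, and consider the canonical surjections $\pi_G:F_{\ms L}(X)\onto G$ and $\pi_A:F_{\ms L}(X)\onto A$ induced by $\delta$ and by the identity respectively. The factorisation exists as soon as $\ker\pi_G\subseteq\ker\pi_A$. So suppose $\tau(\ov x)$ and $\sigma(\ov x)$ are terms with $\ov x$ in $X$ satisfying $\pi_G(\tau)=\pi_G(\sigma)$. By construction of $G$, this means $h(\tau)=h(\sigma)$ in every $B\in\K$ of size at most $\kappa$ and every $h:X\to B$. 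A finite tuple of witnesses in an arbitrary $\K$-structure always generates a substructure of size at most $\kappa$, which still lies in $\K$ by closure under $S$, so in fact $\forall \ov x\ \tau=\sigma$ holds throughout $\K$. Thus this algebraic sentence lies in $E$, and since $A\mo E$, we get $\pi_A(\tau)=\pi_A(\sigma)$, as required.

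The induced map $G\to A$ is then a homomorphism (routine check on terms) and surjective (its image contains $X$, which generates $A$), whence $A\in H(\K)=\K$. The main delicate point is the set-theoretic trimming underlying the product defining $G$: the cardinality bound $\kappa$ must be chosen so that any failure of an equation witnessed anywhere in $\K$ is already witnessed in a structure of size $\leq\kappa$, which is precisely what closure under $S$ affords. Everything else reduces to bookkeeping around the universal property of $F_{\ms L}(X)$.
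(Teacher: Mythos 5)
Your proof is correct. Note that the paper does not prove this statement at all: it is Birkhoff's HSP theorem, quoted with a reference to Hodges (9.2.8), and the argument you give --- preservation of equations under $H$, $S$, $P$ for the easy direction, and for the converse the construction of a relatively free algebra as the substructure generated by the diagonal in a set-indexed product of small members of $\K$, together with the observation that closure under $S$ lets one witness any failed identity in a structure of size at most $\kappa$ --- is precisely the classical proof found there. The only points worth polishing are routine: the pairs $(B,h)$ must be taken up to isomorphism to form a set (as you do), and extending a finite assignment $\ov x\mapsto\ov b$ to all of $X$ uses that the generated substructure is nonempty, a non-issue except in the degenerate case of closed terms in a constant-free language.
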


Notice that varieties are quasivarieties of a certain kind in an equational language. In general, the set $\mathsf{T_V}$ of algebraic
 consequences of $\t$ axiomatises the smallest variety $\vk$ containing $\K$, and we have $\K\subset \wk\subset \vk$. One interest of varieties lies in the description of the homomorphisms by their ``kernels'', which may be defined in a familiar
 way in the following situation.
 
 \paragraph{Group-based algebras}
 
 It is possible to generalise the notion of commutative ring, by considering certain varieties of groups with additional operations,
  introduced in~\cite{HI} (see also~\cite{BP}, lecture 4, section 1).
  \begin{defi}
   Suppose that $\ms L$ has only one sort and contains symbols $*,^{-1},e$ for group operations. An $\ms L$-structure $A$ is a
   \emph{group-based algebra} (for $\ms L$), if its reduct to $\langle *,^{-1},e\rangle$
     is a group (not necessarily abelian) and for every functional symbol $F$ of positive arity $n$,
      we have $F(e,\ldots,e)=e$. $\K$ is a variety of \emph{group-based algebras} in $\ms L$ if every object of $\K$
     is a group-based algebra.
  \end{defi}
Notice that the class of \emph{all} group-based algebras for $\ms L$ is already a variety (axiomatised in $\ms L$
 by the sentences $F(e,\ldots,e)=e$ for all functional symbols $F$), so a variety of group-based algebras
is merely a subvariety of this one. Note also that there is no condition on the \emph{constants} of the language.
\begin{ex}\label{EX3}
 The class of rings in the language of rings is a variety a group-based algebras, because $0\times 0=0$. This is also
  the case for Lie rings, which are abelian groups equipped with a Lie Bracket (see~\cite{BP} for this and other examples).\\
 Closer to model theory are the variety of differential rings in $\langle +,-,\times,d,0,1 \rangle$ (Example~\ref{APEX1})
  and the variety of rings with an additional endomorphism in $\langle +,-,\times,\sigma,0,1\rangle$ (Example~\ref{APEX2}).
\end{ex}
A homomorphism $f:A\to B$ of group-based algebras is in particular a group homomorphism, so the \emph{algebraic}
kernel of $f$ is at least a normal subgroup of $A$.
\begin{defi}
 An \emph{ideal} of a group-based algebra $A$ in $\ms L$ is a normal subgroup $I$ of $A$, such that for every functional
  symbol $F$ of $\ms L$, of arity $n$, and every $n$-tuples $\ov a\in I^n$, $\ov b\in A^n$, one has $F(\ov a)\in I$ and
 $F(\ov a)^{-1}*F(\ov b)^{-1}*F(\ov a*\ov b) \in I$. The element $F(\ov a)^{-1}*F(\ov b)^{-1}*F(\ov a*\ov b)$ is the
 \emph{commutator} of $\ov a$, $\ov b$ and $F$ ($\ov a*\ov b$ denotes the coordinatewise product of the tuples).
\end{defi}

  \subsection{Existentially closed structures and model completeness}
The notion of model-completeness may be introduced along with the broader notion of existential completeness, which in turn may be
  expressed using universal formulas. Replacing these by quasi-algebraic ones, we will get an analog of the ``geometric saturation''
  we find in algebraically closed fields. Positive existential completeness is as well an analog piece of first order logic
   (as in~\cite{MI}), or may be seen as a generalisation of first order logic in the context of ``positive model theory''
   (as in~\cite{BY} and \cite{BYP}). For the remainder of this section, $\K$ is \emph{any} class of $\ms L$-structures.
\begin{defi}
 An $\ms L$-formula is \emph{existential} if it is a prenex formula mentionning only existential quantifiers in its prefix. An
  existential formula is \emph{positive (existential)}, or \emph{coherent}, if it does not mention any negation.
\end{defi}

Existential and coherent sentences with parameters are ``preserved'' by respectively, embeddings and homomorphisms. The theory
 of existential completeness deals with the reverse property.
\begin{defi}
An embedding of $\ms L$-structures $f:A\to B$ is \emph{existentially closed} if for every existential sentence
$\phi(\ov a)$ with parameters in $A$ such that $B\mo \phi(f\ov a)$, one has $A\mo\phi(\ov a)$. Likewise, a homomorphism
$f:A\to B$ is an \emph{immersion} if it reflects the validity of every coherent sentence $\phi(\ov a)$ with parameters in $A$.
\end{defi}
Notice that existentially closed embeddings are immersions, because immersions are embeddings and coherent formulas are existential.
 The model-theoretic following notions of ``saturation'' lie at the heart of the methods and problems expounded here. 
\begin{defi}
 If $A$ is an object of $\K$, then $A$ is said \emph{existentially closed in $\K$} if every \emph{embedding} $f:A\into B$ in $\K$
  is existentially closed. It is \emph{positively existentially closed in $\K$}, if every \emph{homomorphism} $f:A\to B$ in $\K$ is an immersion.
\end{defi}
 Notice that every existential formula is equivalent to a finite disjunction of ``primitive formulas'', i.e. of the form
  $\chi(\ov x)=\exists \ov y\ [\bigwedge_{i=1}^n\phi_i(\ov x,\ov y)\wedge \bigwedge_{j=1}^m\neg\psi_j(\ov x,\ov y)]$, where the
   $\phi_i$'s and the $\psi_j$'s are atomic. Likewise, every coherent formula is equivalent to a finite disjunction of
    positive primitive ones (the same form as $\chi$ but without the negations). This means existential completeness
     may be reformulated as follows.
\begin{prop}
Let $f:A\to B$ a map of $\ms L$-structures. If $f$ is an embedding, then it is existentially closed if and only if, for every (basic) universal sentence
   $\phi(\ov a)$ defined in $A$ and such that $A\mo\phi(\ov a)$, one has $B\mo\phi(f\ov a)$. If $f$ is a homomorphism, then it is an
   immersion if and only if it preserves the validity of $h$-universal sentences $\phi(\ov a)$ defined in $A$ (i.e. of the form
   $\forall \ov y\ [\neg\bigwedge\Phi(\ov a,\ov y)]$, where $\Phi$ is a finite set of atomic formulas).
\end{prop}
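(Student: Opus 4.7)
The plan is to observe that basic universal formulas are, up to logical equivalence, precisely the negations of primitive existential formulas, while $h$-universal formulas are the negations of positive primitive formulas. Combined with the already-noted fact that every existential formula is equivalent to a finite disjunction of primitives and every coherent formula to a finite disjunction of positive primitives, the proposition becomes a contrapositive restatement of the definitions.

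First I would verify the syntactic translation. A basic universal formula
\[
\forall\ov y\,[\bigwedge\Phi(\ov x,\ov y)\imp\bigvee\Psi(\ov x,\ov y)]
\]
is logically equivalent to $\neg\exists\ov y\,[\bigwedge\Phi(\ov x,\ov y)\wedge\bigwedge_{\psi\in\Psi}\neg\psi(\ov x,\ov y)]$, i.e.\ the negation of a primitive existential formula; conversely the negation of any primitive formula rewrites in this shape. The analogous correspondence between $h$-universal and positive primitive formulas is immediate: $\forall\ov y\,\neg\bigwedge\Phi(\ov x,\ov y)$ is just $\neg\exists\ov y\,\bigwedge\Phi(\ov x,\ov y)$.

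For the first equivalence, if $f:A\to B$ is an existentially closed embedding and $A\mo\phi(\ov a)$ with $\phi$ basic universal, then $\neg\phi$ is existential; if $B\not\mo\phi(f\ov a)$, existential closedness would force $A\not\mo\phi(\ov a)$, a contradiction. Conversely, assume $f$ preserves basic universal sentences satisfied in $A$, and let $\phi(\ov a)$ be existential with $B\mo\phi(f\ov a)$. Writing $\phi$ as a disjunction $\chi_1\vee\cdots\vee\chi_k$ of primitives, if $A\not\mo\phi(\ov a)$ then each $\neg\chi_i$ is basic universal and satisfied in $A$ at $\ov a$, hence by hypothesis in $B$ at $f\ov a$, contradicting $B\mo\phi(f\ov a)$. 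The second equivalence is obtained by the exact same scheme, replacing ``existential'' by ``coherent'', ``primitive'' by ``positive primitive'', and ``basic universal'' by ``$h$-universal''; the preservation of atomic formulas by the homomorphism $f$ (as opposed to equivalence in the embedding case) does not enter, since we only use the definition of immersion applied to the coherent $\chi_i$'s and the preservation hypothesis applied to the $\neg\chi_i$'s.

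The argument is essentially mechanical: no genuine obstacle arises. The only step requiring slight bookkeeping is the pushing of negations through the prenex normal form to establish the syntactic correspondence between basic universal (resp.\ $h$-universal) sentences and negations of primitives (resp.\ positive primitives); everything else is a contrapositive manipulation distributing over finite disjunctions.
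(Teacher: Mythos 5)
Your proof is correct and follows exactly the route the paper intends: the proposition is stated there without proof, as an immediate consequence of the preceding remark that existential (resp.\ coherent) formulas are finite disjunctions of primitive (resp.\ positive primitive) ones, whose negations are precisely the basic universal (resp.\ $h$-universal) formulas. Your write-up simply makes explicit the contrapositive bookkeeping the paper leaves to the reader.
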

 This is the spirit in which we will introduce \emph{geometric completeness} in sections~\ref{GCS} and~\ref{GC}.
  We will focus here on the situations where $\K$ is elementary, and sometimes where the subclass of (positively) existentially closed
structures in $\K$ also is.
\begin{defi}
 Suppose $\K=Mod(\t)$ is elementary. If every object of $\K$ is existentially closed in $\K$, then $\t$ (and then $\K$) is said
 \emph{model-complete}. If every object of $\K$ is positively existentially closed in $\K$, then $\t$ (and then $\K$) is said
 \emph{positively model-complete}.
\end{defi}

 We mention that the analogy between existential and positive existential completeness goes beyond what is presented here, as the first
 notion may be reduced to the second through ``Positive Morleyisation'' (see~\cite{BY}, 1.2 and \cite{BYP}). 
   Moreover, every positively model-complete theory is model-complete, which was in the definition given in~\cite{MI}, section 2, but
   here is a consequence of Lemma~\ref{POSEL}, itself deriving from the following one.

\begin{lem}(\cite{BYP}, lemma 15)\label{PMCCOH}
 If $\t$ is positively model-complete, then every coherent formula has a coherent complement modulo $\t$.
\end{lem}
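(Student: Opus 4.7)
The plan is to proceed by a standard compactness argument combined with the defining property of positive model-completeness, namely that every homomorphism between models of $\t$ is an immersion. Fix a coherent formula $\phi(\ov x)$; the goal is to produce a coherent $\Theta(\ov x)$ satisfying $\t\mo\forall \ov x\ [\phi(\ov x)\vee\Theta(\ov x)]$ and $\t\mo\forall\ov x\ \neg[\phi(\ov x)\wedge\Theta(\ov x)]$, which is exactly what ``coherent complement modulo $\t$'' should mean.

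Let $\Psi$ be the set of coherent formulas $\psi(\ov x)$ such that $\t\mo\forall\ov x\ \neg[\phi(\ov x)\wedge\psi(\ov x)]$. I would first claim that $\t\cup\{\neg\phi(\ov c)\}\cup\{\neg\psi(\ov c):\psi\in\Psi\}$ (with $\ov c$ fresh constants) is inconsistent. Suppose it has a model $A$ with $\ov a=\ov c^A$; then $A\mo\t$, $A\not\mo\phi(\ov a)$, and $A\not\mo\psi(\ov a)$ for all $\psi\in\Psi$. Consider the theory $\t\cup D^+A\cup\{\phi(\ov a)\}$ in $\ms L\sqcup A$: a model of it amounts to a homomorphism $f:A\to B$ with $B\mo\t$ and $B\mo\phi(f\ov a)$. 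If this theory were consistent, positive model-completeness would force $f$ to be an immersion, whence $A\mo\phi(\ov a)$, contradicting the choice of $A$.

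Hence $\t\cup D^+A\cup\{\phi(\ov a)\}$ is inconsistent, and compactness yields finitely many atomic formulas $\chi_1(\ov a,\ov b),\ldots,\chi_n(\ov a,\ov b)\in D^+A$ (with $\ov b$ naming the extra parameters needed) such that $\t\mo\forall \ov x,\ov y\ \neg[\bigwedge_i\chi_i(\ov x,\ov y)\wedge\phi(\ov x)]$. Setting $\chi(\ov x):=\exists\ov y\ \bigwedge_i\chi_i(\ov x,\ov y)$, this $\chi$ is coherent, lies in $\Psi$, and is satisfied by $\ov a$ in $A$, which contradicts $A\not\mo\psi(\ov a)$ for all $\psi\in\Psi$. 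So the original set is indeed inconsistent.

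Applying compactness once more, there exists a finite $\Psi_0\subset\Psi$ with $\t\mo\forall\ov x\ [\phi(\ov x)\vee\bigvee_{\psi\in\Psi_0}\psi(\ov x)]$, and the disjunction $\Theta(\ov x):=\bigvee_{\psi\in\Psi_0}\psi(\ov x)$ is again coherent and incompatible with $\phi$ modulo $\t$ (each individual $\psi$ is, by membership in $\Psi$). The only delicate point, and what I would treat as the heart of the argument, is the reduction to positive model-completeness via the atomic diagram: one must be careful to view models of $\t\cup D^+A$ as $\ms L(A)$-structures induced by homomorphisms out of $A$, and then invoke immersion to reflect the coherent sentence $\phi(\ov a)$ back into $A$. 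Once that step is in place, the two compactness steps are routine.
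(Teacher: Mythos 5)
Your argument is correct: the paper does not prove this lemma itself but simply cites \cite{BYP}, Lemma 15, and your two-step compactness argument (realising $\t\cup D^+A\cup\{\phi(\ov a)\}$ as a homomorphism out of $A$, invoking positive model-completeness to make it an immersion and reflect $\phi(\ov a)$, then extracting first a finite coherent formula from $D^+A$ and second a finite disjunction over $\Psi$) is exactly the standard proof of the cited result. The only point worth flagging is a convention: $\top$ and $\bot$ must be admitted as coherent formulas to handle the degenerate cases where the finite subdiagram, respectively the finite set $\Psi_0$, is empty.
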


   \subsection{Nullstellens\"atze}\label{HCNS}
Speaking about ``saturated'' structures, in algebraic geometry the fundamental ones are algebraically closed fields. One way
 of stating Hilbert's Nullstellensatz is the following ``geometric'' form.
 
\begin{thm}(\cite{RH}, 1.2)\label{HILNUL}
 Let $k$ be an algebraically closed field and $I$ be an ideal of a polynomial algebra $k[X_1,\ldots,X_n]$ over $k$.
  The set $\ms I(\ms Z(I))$ of all polynomials of $k[\ov X]$ vanishing on the set $\ms Z(I)$ of zeros of $I$ in
   $k^n$, is the algebraic radical of $I$, $\sqrt{I}$.
\end{thm}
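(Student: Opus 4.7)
The plan is the classical two-step reduction: establish the easy inclusion directly, then handle the hard one via Rabinowitsch's trick, reducing it to the weak Nullstellensatz, where the hypothesis that $k$ is algebraically closed genuinely intervenes.

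First, $\sqrt{I}\subseteq\ms I(\ms Z(I))$ is immediate. If $f^n\in I$, then for any $\ov a\in\ms Z(I)$ we have $f(\ov a)^n=0$; since $k$ is a field, $f(\ov a)=0$, so $f\in\ms I(\ms Z(I))$.

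For the reverse inclusion, let $f\in\ms I(\ms Z(I))$. By Hilbert's basis theorem, $I=(g_1,\ldots,g_r)$ is finitely generated. Following Rabinowitsch, I would introduce a fresh indeterminate $Y$ and set $J=(g_1,\ldots,g_r,\,1-Yf)\subseteq k[\ov X,Y]$. A direct check shows $\ms Z(J)=\emptyset$ in $k^{n+1}$: any common zero $(\ov a,b)$ would have $\ov a\in\ms Z(I)$, hence $f(\ov a)=0$ by hypothesis, contradicting $1-bf(\ov a)=0$. Granting the weak Nullstellensatz (every proper ideal of $k[\ov X,Y]$ admits a zero in $k^{n+1}$), it follows that $J=(1)$, so there is an expression $1=\sum_{i}h_i g_i+h\cdot(1-Yf)$ in $k[\ov X,Y]$. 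Working in $k(\ov X)$, substituting $Y=1/f$ kills the last term, and multiplying through by a sufficiently large power $f^N$ yields $f^N\in I$, i.e. $f\in\sqrt{I}$.

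The main obstacle is thus the weak Nullstellensatz: a proper ideal of $k[\ov X,Y]$ has a common zero in $k^{n+1}$. I would argue algebraically by taking a maximal ideal $\mathfrak m$ containing the given proper ideal, noting that $K=k[\ov X,Y]/\mathfrak m$ is a field finitely generated as a $k$-algebra, and invoking Zariski's lemma to conclude that $K$ is a finite, hence algebraic, extension of $k$; since $k$ is algebraically closed, $K=k$, and the images of the variables form the sought-after common zero. A more model-theoretic alternative, closer to the spirit of the paper, is to embed $K$ into some algebraically closed extension $K'$ of $k$ and exploit the model-completeness of the theory of algebraically closed fields to pull back to $k$ the existential sentence expressing that the images of $X_1,\ldots,X_n,Y$ in $K'$ annihilate the generators of $\mathfrak m$. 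In either approach, this is the one place where the hypothesis on $k$ is used; the Rabinowitsch reduction is purely formal.
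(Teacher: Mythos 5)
Your argument is correct and complete in outline. The paper itself does not prove Theorem~\ref{HILNUL}: it is quoted as a classical result with a citation to Hartshorne, and the whole point of the surrounding development is to take this statement as the prototype to be abstracted, not to re-derive it. So there is no internal proof to compare against; what you give is the standard route, and it is sound. The easy inclusion $\sqrt{I}\subseteq\ms I(\ms Z(I))$ uses only that $k$ is an integral domain; the Rabinowitsch reduction correctly converts the strong statement into the weak Nullstellensatz for $k[\ov X,Y]$ (the only degenerate case, $f=0$, is trivial since then $f\in I$ already); and the weak Nullstellensatz is correctly reduced to Zariski's lemma, which is where algebraic closedness of $k$ enters. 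Two small remarks. First, your model-theoretic alternative (pulling the existential sentence back along $k\preceq K'$ using model-completeness of ACF) is legitimate only if model-completeness of ACF is established independently, e.g.\ by quantifier elimination; the paper itself remarks that model-completeness of ACF is ``an easy corollary to the Nullstellensatz'', so within the paper's own logical ordering that alternative would be circular, whereas the Zariski-lemma route you put first is not. Second, you should state explicitly that Zariski's lemma (a field finitely generated as a $k$-algebra is finite over $k$) is being taken as known, since it carries essentially all the content of the weak form; with that acknowledged, the proof is complete.
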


In the ``language of rings'' $\langle +,-,\times,0,1\rangle$, the theory of algebraically closed fields is model-complete (\cite{MA}),
which is an easy corollary to the Nullstellensatz, and inspired G.Cherlin to develop the following little theory.

\begin{defi}
 Let $\t$ be a theory of commutative rings and $A$ be a ring. Let $X$ be any set and $I$ be an ideal of $A[X]$.
The \emph{$\t$-radical of $I$}, noted $\trad I$, is the intersection of all ideals $J$ of $A[X]$ such that $I\subseteq J$,
$A[X]/J\mo\tu$ and $J\cap i(A) =(0)$, where $i(A)$ denotes the copy of $A$ in $A[X]$.
\end{defi}
In other words, we keep for the $\t$-radical those ideals which are the kernel of an evaluation morphism at a rational
point of $I$ in an extension of $A$, model of $\tu$ (or $\t$). 

\begin{thm}\label{CH}(\cite{CH}, III.6, theorem 73)
 Let $\t$ be a theory of commutative rings and $A$ an existentially closed model of $\t$. If $X=X_1,\ldots,X_n$ is a finite
  tuple of variables, $\I$ a finitely generated ideal of $A[X]$ and $P$ a polynomial of $A[X]$, then $\ms Z_A(I)\subset\ms Z_A(P)$
   if and only if $P\in\trad I$.
\end{thm}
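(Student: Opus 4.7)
My plan is to split the equivalence into its two standard directions: the $(\Leftarrow)$ part I would dispatch by considering evaluation homomorphisms at rational points, while for $(\Rightarrow)$ I would transport the hypothesis $\zi{A}{\I}\subseteq\zi{A}{P}$ along a suitable extension of $A$, using that $A$ is existentially closed in $\t$. Throughout I would write $i:A\into A[X]$ for the canonical inclusion and fix a finite generating set $f_1,\ldots,f_k$ of $\I$; this finiteness will be essential at the end.

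For $(\Leftarrow)$, assuming $P\in\trad{\I}$, I would pick any $\ov a\in\zi{A}{\I}$ and consider the evaluation homomorphism $\mathrm{ev}_{\ov a}:A[X]\to A$, $X_j\mapsto a_j$. Its kernel $J_{\ov a}$ contains $\I$ (since $\ov a$ annihilates each $f_j$), meets $i(A)$ only in $(0)$ (because $\mathrm{ev}_{\ov a}$ restricts to the identity on $i(A)$), and yields $A[X]/J_{\ov a}\cong A\mo\t$, hence $\mo\tu$. Thus $J_{\ov a}$ belongs to the family whose intersection defines $\trad{\I}$, forcing $P\in J_{\ov a}$, i.e.\ $P(\ov a)=0$.

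For $(\Rightarrow)$, assuming $\zi{A}{\I}\subseteq\zi{A}{P}$, I would take an arbitrary ideal $J$ of $A[X]$ with $J\supseteq\I$, $A[X]/J\mo\tu$ and $J\cap i(A)=(0)$, and aim to show $P\in J$. Setting $B:=A[X]/J$, the third condition supplies an embedding $A\into B$, and $B\mo\tu$ lets me further embed $B\into C$ into some $C\mo\t$. Writing $\ov b\in C^n$ for the image of $\ov X$, one has $f_j(\ov b)=0$ in $C$ for every $j$. The geometric hypothesis is precisely the universal sentence with parameters in $A$
\[
\forall\ov y\ \Bigl[\bigwedge_{j=1}^{k} f_j(\ov y)=0\ \rightarrow\ P(\ov y)=0\Bigr],
\]
which holds in $A$. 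Since $A$ is existentially closed in $\t$, universal sentences with parameters in $A$ that hold in $A$ are preserved under embeddings into models of $\t$; applied to $A\into C$ this gives $P(\ov b)=0$ in $C$, whence also in $B$ (as $B\into C$), that is, $P\in J$.

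The only substantive point I anticipate is this transfer step in $(\Rightarrow)$. It hinges on two things: first, that $\I$ be \emph{finitely} generated, so that ``$\ov y\in\z{\I}$'' is a finite conjunction of atomic formulas and the inclusion of vanishing loci becomes a genuine first order universal sentence; second, that the existential-closedness of $A$ apply to an embedding of $A$ into a proper model of $\t$, not merely of $\tu$. The intermediate factorisation $A\into B\into C$ with $C\mo\t$, available precisely because $B\mo\tu$, is exactly what makes this invocation legitimate.
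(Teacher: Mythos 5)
The paper states this result only as a citation to Cherlin (\cite{CH}, III.6, Theorem 73) and gives no proof of its own, so there is nothing internal to compare against; your argument is correct and is the standard one. Both directions are sound: for $(\Leftarrow)$ the kernel of each evaluation $\mathrm{ev}_{\ov a}$ at a point of $\ms Z_A(\I)$ is indeed one of the ideals $J$ entering the intersection defining $\trad{\I}$, and for $(\Rightarrow)$ the factorisation $A\into A[X]/J\into C$ with $C\mo\t$, together with preservation of the basic universal sentence $\forall\ov y\,[\bigwedge_j f_j(\ov y)=0\imp P(\ov y)=0]$ under the existentially closed embedding $A\into C$, gives $P\in J$. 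You have also correctly isolated the two points on which the argument genuinely depends, namely the finite generation of $\I$ and the need to pass from $A[X]/J\mo\tu$ to a model of $\t$ itself before invoking existential closedness.
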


\section{A formal algebra}\label{FA}
Let $A$ be an $\ms L$-structure. If $f:A\to B$ is a homomorphism, then the set of all atomic sentences with parameters
 in $A$ satisfied in $B$ through $f$, characterises the image of $f$, much as an ideal in ring theory. In this section, we develop
 a little theory of ``abstract ideals'' which we call $a$-types. For this purpose, as we focus on homomorphisms,
 we introduce the notation $f\models \phi$, if $\phi(\ov a)$ is a sentence with parameters in $A$ : this means that
 $B\models \phi(f\ov a)$.

  \subsection{Algebras and $a$-types}
\begin{defi}
An $\ms L$-homomorphism $f:A\to B$ will be called an \emph{$A$-algebra}.\\
If $f:A\to B$ and $g:A\to C$ are $A$-algebras, then a \emph{morphism of $A$-algebras} from $f$ into $g$ is
 an $\ms L$-homomorphism $h:B\to C$, such that $h\circ f=g$.
\end{defi}

We will factorise algebras based on the structure $A$ by their ``logical kernel'', which will be defined as a particular case of
 the following ``linguistic'' notion.
\begin{defi}
 An \emph{$a$-type of $A$} ($a$ for ``atomic'') is a set of atomic sentences of $\ms L(A)$.
 If $f:A\to B$ is an $\ms L$-homomorphism, then the \emph{$a$-type of $f$}, noted $tp_a(f)$, is the set of all atomic
 sentences with parameters in $A$, satisfied by $f$.
\end{defi}

If $\pi$ is an $a$-type of $A$, we want to define the ``quotient of $A$ by $\pi$'', so that this quotient
 be a ``universal'' model of $\pi$. Let then $\tilde\pi$ be the set of $L(A)$-atomic sentences $\phi$ such that
  $D^+A\cup\pi\models\phi$. We
 define on the elements of $A$ an equivalence relation $\sim$ by putting $a\sim b$ if and only if the formula $a=b$ is in $\tilde\pi$.
\begin{defi}
 An $a$-type $\pi$ of $A$ is \emph{closed} if $\pi=\tilde\pi$.
\end{defi}
 Then define as in \cite{BY}, 1.25, an $\ms L$-structure on the quotient $A/\pi$, interpreting each sort symbol in a natural way : the
  set of equivalence classes of elements of its interpretation in $A$. If $F$ is a functional symbol of $\ms L$ and $\ov a$ is an appropriate tuple of $A$, then
 $F^\pi([\ov a]):=[F(\ov a)]$ describes the interpretation of $F$ in $A/\pi$, and if $R$ is a relational symbol, we define its extension
 $R^\pi$ in $A/\pi$ as the set of appropriate tuples $[\ov a]$ such that the sentence $R(a)$ is in $\tilde\pi$ (here $[\ov a]$ denotes
 the equivalence class of the tuple $\ov a$). We leave to the reader to check that this is well-defined. The canonical projection is then a
 homomorphism $f_\pi:A\to A/\pi$ because is satisfies $D^+A$, and by construction $f_\pi\models \pi$.

\begin{defi}
 The homomorphism $f_\pi:A\to A/\pi$ is the \emph{quotient of $A$ by $\pi$}.
\end{defi}

If $f:A\to B$ is an homomorphism and $f\models \pi$, then put $g([a]):=f(a)$ for all $a\in A$. If $a,b\in A$ and $a\sim b$, then
 $f\models a=b$, so we readily see that $g$ well defines an homomorphism of $A$-algebras from $f_\pi$ to $f$, which is necessarily
 unique by construction; this is summarised in the following

\begin{prop}
 If $f:A\to B$ is a homomorphism and $f\mo\pi$, then there exists a unique homomorphism of $A$-algebras from $f_\pi$
 to $f$.
\end{prop}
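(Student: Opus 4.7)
The proof is essentially the construction already outlined in the paragraph immediately preceding the proposition; the plan is to verify that the map $g\colon A/\pi\to B$ defined by $g([a]):=f(a)$ is well-defined, is a homomorphism of $\ms L$-structures, is a morphism of $A$-algebras from $f_\pi$ to $f$, and is uniquely determined by these requirements.

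First I would check well-definedness. Suppose $a\sim b$ in $A$, so that the atomic sentence $a=b$ belongs to $\tilde\pi$, i.e.\ $D^+A\cup\pi\mo (a=b)$. Now $f$ satisfies $D^+A$ (because $f$ is a homomorphism, so the induced $\ms L(A)$-expansion $(B,f)$ is a model of the atomic diagram) and $f\mo\pi$ by hypothesis, hence $f\mo (a=b)$, that is $f(a)=f(b)$. So the value $g([a])$ does not depend on the chosen representative. Since the underlying set of $A/\pi$ consists by construction of the equivalence classes $[a]$ for $a\in A$, this defines $g$ on all of $A/\pi$.

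Next I would verify that $g$ is an $\ms L$-homomorphism. For a functional symbol $F$ of arity $n$ and $\ov a\in A^n$ appropriate, one computes
\[
g(F^\pi([\ov a]))=g([F(\ov a)])=f(F(\ov a))=F^B(f\ov a)=F^B(g[\ov a]),
\]
using the definition of $F^\pi$ and the fact that $f$ is a homomorphism. For a relational symbol $R$ and $\ov a\in A^n$, if $R^\pi([\ov a])$ holds then by definition $R(\ov a)\in\tilde\pi$, so by the same argument as above $f\mo R(\ov a)$, i.e.\ $R^B(g[\ov a])$ holds. Thus $g$ is an $\ms L$-homomorphism, and the identity $g\circ f_\pi=f$ is immediate from $g([a])=f(a)$ and $f_\pi(a)=[a]$, so $g$ is a morphism of $A$-algebras $f_\pi\to f$.

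Finally uniqueness follows because $f_\pi\colon A\to A/\pi$ is surjective on underlying sets: any morphism $h\colon f_\pi\to f$ of $A$-algebras satisfies $h([a])=h(f_\pi(a))=f(a)=g([a])$ for every $a\in A$, and every element of $A/\pi$ has the form $[a]$, so $h=g$. I do not anticipate any serious obstacle; the only point requiring a little attention is to invoke $D^+A$ together with $\pi$ (rather than $\pi$ alone) when deducing that $f$ satisfies every sentence in $\tilde\pi$, which is exactly the definition of the deductive closure $\tilde\pi$.
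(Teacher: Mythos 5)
Your proof is correct and follows essentially the same route as the paper, which defines $g([a]):=f(a)$ in the paragraph preceding the proposition, checks well-definedness via $f\mo (a=b)$ for $a\sim b$, and leaves the remaining verifications implicit. You have simply made explicit the homomorphism and uniqueness checks that the paper declares ``readily seen''.
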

This universal property easily leads to the following ``isomorphism theorem''.

\begin{thm}\label{iso}
 Let $f:A\to B$ be a homomorphism and $\pi=tp_a(f)$. We have an isomorphism $A/\pi\simeq f(A)$.
\end{thm}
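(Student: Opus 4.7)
My plan is to build the isomorphism explicitly by invoking the universal property stated in the preceding proposition, so the proof reduces to checking injectivity and that the map reflects atomic formulas.

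First I would observe that $\pi = tp_a(f)$ is automatically closed. Indeed, if $\phi$ is an atomic $\ms L(A)$-sentence with $D^+A\cup\pi\models\phi$, then since $f$ is a homomorphism we have $f\mo D^+A$, and by definition $f\mo\pi$; hence $f\mo\phi$, so $\phi\in tp_a(f)=\pi$. Thus $\pi=\tilde\pi$, and the equivalence relation $\sim$ together with the interpretations in $A/\pi$ are defined directly in terms of $\pi$ itself. In particular $A/\pi\mo\phi([\ov a])$ if and only if $\phi(\ov a)\in\pi$, for every atomic $\phi$.

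Next, since $f\mo\pi$, the preceding proposition supplies a unique morphism of $A$-algebras $g:A/\pi\to B$ with $g\circ f_\pi=f$, given by $g([a])=f(a)$. Its image is exactly $f(A)$, which is a substructure of $B$: closure under the functional symbols follows from $f$ being a homomorphism, and we equip it with the induced interpretations of relational symbols, namely $R^{f(A)}:=R^B\cap f(A)^{\mathrm{ar}(R)}$. Corestricting $g$ yields a surjective homomorphism $g:A/\pi\onto f(A)$.

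It remains to show $g$ is an isomorphism. For injectivity, if $g([a])=g([b])$ then $f\mo a=b$, so $(a=b)\in\pi$, hence $a\sim b$ and $[a]=[b]$. For the inverse to be a homomorphism, it is enough to check that $g$ reflects every atomic formula, and this follows from the chain
\[
A/\pi\mo\phi([\ov a])\iff\phi(\ov a)\in\pi\iff f\mo\phi(\ov a)\iff B\mo\phi(f\ov a)\iff f(A)\mo\phi(f\ov a),
\]
where the first equivalence uses closedness of $\pi$, the second is the definition of $tp_a(f)$, and the last holds because $f(A)$ is a substructure of $B$ (so atomic sentences with parameters in $f(A)$ transfer between $f(A)$ and $B$). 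The main obstacle is minor but worth the care: one must keep track of the fact that the structure on $f(A)$ is the induced substructure, so that both relational and equational atoms on $A/\pi$ correspond precisely to atoms satisfied in $f(A)\subseteq B$; without this compatibility one would only obtain a surjective homomorphism rather than an isomorphism.
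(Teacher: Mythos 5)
Your proof is correct and follows exactly the route the paper intends: the paper gives no written proof of this theorem, merely remarking that the universal property of the quotient ``easily leads to'' it, and your argument supplies precisely those omitted details (closedness of $tp_a(f)$, the induced map $g$ with $g\circ f_\pi=f$, injectivity, and reflection of atomic sentences onto the induced substructure $f(A)$). Nothing to correct.
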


  \subsection{Prime and radical $a$-types}\label{PRT}
Here we introduce in this syntactic context the notions of \emph{prime} and \emph{radical ideals} of a commutative ring with unit.
The underlying idea of this development is the observation that a prime ideal in a ring is the kernel of a homomorphism into
 an integral domain, and that an integral domain is always a \emph{subring} of an algebraically closed field. Furthermore,
 any radical ideal is the intersection of the primes containing it, which is interpreted as a characterisation of
 semisimple rings in the \emph{representation theorem} for these rings (see \cite{CH}, III.5, definition 66). We will generalise these
 definitions and this representation, and connect them later to a generalised Nullstellensatz through the theory of quasivarieties : indeed, the affine
 coordinate ring of an affine algebraic variety is a semisimple ring, ``represented'' as a subring of a product of
  integral domains.\\
Let then $\t$ be a first order theory in $\ms L$, $\K$ the class of its models and $A$ an $\ms L$-structure.

 \begin{defi}\label{PRAT}
  Let $\pi$ be an $a$-type of $A$. 
  We will say that : 
\begin{itemize}
 \item $\pi$ is \emph{($\t$-)prime} if it is closed and the quotient structure $A/\pi$ is a model of $\tu$
 \item the intersection of the set of all prime $a$-types $\mf p$ containing $\pi$ is the \emph{(positive $\t$-)radical of $\pi$},
 written $\trap \pi$
 \item $\pi$ is \emph{(positively $\t$-)radical} if $\pi=\trap\pi$.
\end{itemize}
 \end{defi}

 \begin{rem}
 If $\pi$ is an $a$-type of $A$, then a compactness argument shows that $\pi$ is prime if and only if for every finite set $\Phi$ of atomic
 sentences with parameters
  in $A$, if $\t\cup D^+A\cup\pi\mo\bigvee \Phi$, then $\Phi\cap \pi\neq\emptyset$, i.e. there is $\phi$ in $\Phi$ such that
  $\phi\in \pi$; this reminds us of the definition of a prime ideal in ring theory.\\
  Similarly, if $\pi$ is radical, then $\pi$ is closed and in fact, the next theorem (\ref{rep}) may be used to show that if
  $\phi(\ov a)$ is a sentence with parameters in $A$ such that $\t\cup\pi\mo\phi$, then $\phi\in \pi$ (we may replace $\t$ by $\tw$;
  by compactness, this means that there is a finite $\pi_0(\ov a,\ov b)\subset\pi$, such that
  $\t\mo\forall\ov x\ov y\ [\bigwedge\pi_0(\ov x,\ov y)\imp\phi(\ov x)]$). This may alternatively be seen in the fact that the $\t$-radical
  $a$-types are the $\tw$-prime ones (see the comment following Theorem~\ref{rep}).
 \end{rem}

 \begin{ex}\label{EX4}
We go back to examples~\ref{EX1}.\\
If $\t$ is the theory of algebraically closed fields, then the prime $a$-types of a ring $A$ are in bijection with prime ideals,
 whereas radical $a$-types correspond to radical ideals.\\
If $\t$ is the theory of real-closed fields, then the prime $a$-types correspond to real prime ideals and radical $a$-types
 to real ideals (see~\cite{BCR}, section 4.1 for these notions).\\
If $\t$ is the theory of real-closed ordered fields in the language $\ms L=\langle +,-,\times,<,0,1\rangle$ of strictly ordered rings,
 every ring can be turned into an $\ms L$-structure with the empty interpretation for $<$. If then $P$ is a prime cone of $A$
  (see~\cite{BCR}, 4.2 and 4.3), one may associate to $P$ the $a$-type of the homomorphism $A\to (A/supp(P),<_P)$, where $<_P$ is the
   induced total order on $A/supp(P)$. In fact, this establishes a bijection with the prime $a$-types of $A$.\\
If $\t$ is the ``same'' theory in the language with large order $\langle +,-,\times,\leq,0,1\rangle$, turn a ring $A$
 into an $\ms L$-structure by interpreting $\leq$ as equality. In the same way, prime cones are identified with prime $a$-types in $A$.
 \end{ex}

Notice that different theories may give rise to the same notion of prime and/or radical $a$-types. For instance, the theories
 of integral domains, of fields and of algebraically closed fields in the language of rings, produce the same notion
 of prime $a$-types, which correspond to prime ideals. Analogous is the case of real fields and real-closed fields and
  their subrings. This obviously comes from the fact that the notion of prime $a$-type
 is based on the \emph{images} of homomorphisms. The puzzling fact will be that this suffices to develop an analog
 of Hilbert's Nullstellensatz, building on the deeper observation that in some way, the radical $a$-types are
 enough for the theory (sections~\ref{GCS} and~\ref{GC}). Here we show how to interpret the definition in terms of the quasivariety
 generated by $\K$. Indeed, if $\ms P$ is the set of all prime $a$-types of $A$, then for every $\mf p\in \ms P$ we have
  the canonical projection $f_\mf p:A\onto A/\mf p$.

\begin{defi}
 The \emph{representation of $A$ relatively to $\t$}, is the product $f_{\ms P}:A\to \prod_{\mf p\in \ms P} A/\mf p$ of the quotient morphisms
  $f_\mf p:A\onto A/\mf p$.
\end{defi}
If $\pi$ is an $a$-type of $A$ and $f_\pi:A\onto A/\pi$ is the projection, if $f$ denotes the representation of $A/\pi$ relatively
 to $\t$, then the radical $\trap\pi$ of $\pi$ is nothing else than the $a$-type of the composite morphism $f\circ f_\pi$. This gives
 us an ``algebraic'' characterisation of radical $a$-types (Theorem~\ref{rep}).
 
\begin{lem}\label{REPQV}
 The quasivariety $\wk$ generated by $\K$ is the class of all structures which are isomorphic to a substructure of a product of
  models of $\t$.
\end{lem}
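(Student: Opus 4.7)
The plan is to prove the lemma by showing that the class
\[
\K' := \{A : A \text{ is isomorphic to a substructure of a product of models of } \t\}
\]
coincides with $\wk$, and this will be done by the usual double inclusion. Invoking Theorem~\ref{GENQV}, $\wk$ is the smallest class containing $\K$ that is closed under isomorphic copies, substructures and (arbitrary, possibly empty) products; so one inclusion amounts to verifying that $\K'$ itself enjoys these closure properties, while the other inclusion is immediate from the definition of a quasivariety.

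First I would check $\K \subseteq \K'$: any $M \in \K$ identifies with itself viewed as a one-factor product, and with the empty product one sees $\mathbf{1} \in \K'$ as well, so the ``empty index'' case in Definition~\ref{QVAR} causes no trouble. Closure of $\K'$ under isomorphic copies is tautological. For substructures, if $A \hookrightarrow A'$ and $A' \hookrightarrow \prod_{i \in I} M_i$ with $M_i \models \t$, composing the two embeddings puts $A$ in $\K'$. For products, given a family $(A_j)_{j \in J}$ with embeddings $A_j \hookrightarrow \prod_{i \in I_j} M_{ji}$, the induced map
\[
\prod_{j \in J} A_j \longrightarrow \prod_{j \in J}\prod_{i \in I_j} M_{ji} \;\cong\; \prod_{(j,i)} M_{ji}
\]
is still an embedding, placing $\prod_j A_j$ into $\K'$. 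By Theorem~\ref{GENQV}, $\wk \subseteq \K'$.

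Conversely, any model of $\t$ lies in $\wk$ (since $\K \subseteq \wk$), so any product of models of $\t$ lies in $\wk$ by closure under products, any substructure of such a product lies in $\wk$ by closure under substructures, and any isomorphic copy remains in $\wk$; thus $\K' \subseteq \wk$.

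I do not expect any genuine obstacle here: the argument is the familiar ``closure-properties'' verification for classes defined by $\mathbf{SP}$-operators. The only minor point to keep in mind is the empty product, which must be allowed so that $\mathbf{1}$ sits in $\K'$ as required by Definition~\ref{QVAR}; this is automatic in the formulation above.
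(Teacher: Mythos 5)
Your proposal is correct and follows essentially the same route as the paper: both arguments observe that the class of structures isomorphic to substructures of products of models of $\t$ contains $\K$, is closed under substructures, and is closed under products (via flattening a product of products into a single product), and then invoke Theorem~\ref{GENQV} for the minimality, with the reverse inclusion being immediate from the closure properties of a quasivariety. Your explicit remark about the empty product and $\mb 1$ is a harmless extra precaution that the paper leaves implicit.
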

\begin{proof}
Call ``admissible'' a structure isomorphic to a substructure of a product of objects of $\K$. Such a structure is certainly in $\wk$.
Suppose now that $(B_i)_{i\in I}$ is a family of admissible structures : every $B_i$ may be construed as a substructure of $C_i$,
itself embedded in a product $\prod_{j\in J_i}C_{j}$ of objects of $\K$. This means that we have an embedding of
 $B=\prod_{i\in I} B_i$ into $\prod_{i\in I} (\prod_{j\in J_i} C_j)$, which shows that $B$ itself is admissible. As
  any substructure of an admissible one is admissible, all this shows that every structure obtained from $\K$ by
   a finite iteration of products or substructures is admissible. By Theorem~\ref{GENQV}, this means that $\wk$ is exactly
    the class of admissible structures.
\end{proof}

\begin{thm}\label{rep}
 An $a$-type $\pi$ is radical if and only if $A/\pi$ is a model of $\tw$.
\end{thm}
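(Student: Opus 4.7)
The plan is to use the canonical representation morphism introduced just before the statement, together with Lemma~\ref{REPQV}. I will rely throughout on the always-valid chain $\pi\subseteq\tilde\pi\subseteq\trap\pi$: the second inclusion holds because every prime $\mf p\supseteq\pi$ is the $a$-type of a homomorphism whose target (in its $\ms L(A)$-expansion) satisfies $D^+A\cup\pi$, hence absorbs every atomic consequence. I will also use the standard correspondence, via pullback along $f_\pi$, between prime $a$-types $\mf q$ of $A/\pi$ and primes $\mf p$ of $A$ containing $\pi$, with $(A/\pi)/\mf q\cong A/\mf p$.

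For the forward direction, assume $\pi$ is radical, so $\pi=\trap\pi$ and in particular $\pi$ is closed. Let $r:A/\pi\to\prod_{\mf p\supseteq\pi}A/\mf p$ be the representation of $A/\pi$ relatively to $\t$. A direct computation on atomic sentences gives $tp_a(r\circ f_\pi)=\bigcap_{\mf p\supseteq\pi}\mf p=\trap\pi=\pi=tp_a(f_\pi)$, and Theorem~\ref{iso} then forces $r$ itself to be an embedding, since both $f_\pi$ and $r\circ f_\pi$ realise the canonical identification of $A/\pi$ with their image. Each factor $A/\mf p$ satisfies $\tu$, hence embeds into a model of $\t$; so $\prod_{\mf p} A/\mf p$ embeds into a product of models of $\t$, which belongs to $\wk$ by Lemma~\ref{REPQV}. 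Therefore $A/\pi\in\wk=Mod(\tw)$.

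For the converse, assume $A/\pi\mo\tw$. Lemma~\ref{REPQV} supplies an embedding $e:A/\pi\into\prod_{i\in I}C_i$ with each $C_i\mo\t$, and I set $g_i:=\mathrm{pr}_i\circ e\circ f_\pi:A\to C_i$ and $\mf p_i:=tp_a(g_i)$. Theorem~\ref{iso} yields $A/\mf p_i\cong g_i(A)\subseteq C_i$, so each $\mf p_i$ is prime and contains $\pi$. For any atomic $\phi\in\bigcap_i\mf p_i$, $\phi$ holds in each $C_i$, hence in the product, and the embedding $e$ reflects this back to $A/\pi$, giving $\phi\in tp_a(f_\pi)=\tilde\pi$. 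Thus $\trap\pi\subseteq\bigcap_i\mf p_i\subseteq\tilde\pi$, which combined with the general chain collapses to $\tilde\pi=\trap\pi$; when $\pi$ is closed, this is exactly $\pi=\trap\pi$.

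The main delicate point will be the closedness bookkeeping. Radicality automatically implies closedness, so the forward direction is unconditional; the converse, taken literally, only shows that $\tilde\pi$ is radical. However, $A/\pi$ depends only on $\tilde\pi$ and the primes above $\pi$ coincide with those above $\tilde\pi$, so the equivalence is naturally read modulo identifying $\pi$ with its closure. Past this cosmetic issue, the real content is the identification of the representation morphism of $A/\pi$ as the vehicle translating between the syntactic intersection $\trap\pi$ and the algebraic fact of $A/\pi$ being embeddable in a product of models of $\t$; once this is in place, both directions follow formally from Lemma~\ref{REPQV} and Theorem~\ref{iso}.
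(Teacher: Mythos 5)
Your proof is correct and follows essentially the same route as the paper's: the forward direction identifies the representation morphism of $A/\pi$ as an embedding via Theorem~\ref{iso} and concludes by Lemma~\ref{REPQV} (the paper invokes $\W_{\uk}=\wk$ at this point), and the converse pulls back the projections of an embedding supplied by Lemma~\ref{REPQV} to obtain prime $a$-types whose intersection recovers $\pi$. Your closedness remark is a genuine, if minor, precision that the paper glosses over --- the converse literally yields $\tilde\pi=\trap\pi$, so the equivalence should indeed be read for closed $a$-types (radicality forces closedness anyway in the forward direction).
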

\begin{proof}
Write as before $f$ for the representation of $A/\pi$ and $g$ for the composition $f\circ f_\pi:A\onto A/\pi \to B$, where $B$ is
 the product of the prime quotients of $A/\pi$.
 Suppose that $\pi=\trap\pi$. As $\trap\pi=tp_a(g)$ this means by the isomorphism theorem (\ref{iso}) that $f$ is an embedding
and $A/\pi$ is in $\w_{\uk}=\wk$.\\
For the converse, suppose that $A/\pi$ is a model of $\tw$. By the characterisation of $\wk$ from the preceding Lemma~\ref{REPQV},
there exists a (possibly empty) family $(M_i)_{i\in I}$ of models of $\t$, and an embedding $g:A/\pi\into \prod_{i\in I} M_i$.
If $g_i:\prod_I M_i\onto M_i$ denotes the $i^{th}$ projection of the product, let $\mf p_i:=tp_a(g_i\circ g\circ f_\pi)$ : $\mf p_i$ is a prime $a$-type
 containing $\pi$. Now as $g$ is an embedding, $\pi$ is the intersection of the $\mf p_i$'s, by definition of the satisfaction
 of atomic sentences in products : we have $\trap\pi\subset\bigcap_{i\in I} \mf p_i =\pi\subset\trap\pi$, so $\pi=\trap\pi$
 is radical.
\end{proof}

Remark that, as quasivarieties are universal classes, by this theorem a $\t$-radical $a$-type is the same
 thing as a $\tw$-prime one.

\begin{cor}\label{REP}
 The quasivariety $\wk=Mod(\tw)$ generated by $\K=Mod(\t)$ is the class of subdirect products of objects of $\uk=Mod(\tu)$.
\end{cor}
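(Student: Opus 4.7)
The plan is to split the asserted equality into two inclusions, the reverse of which is essentially already recorded: just after the definition of subdirect product, it was remarked that any subdirect product of structures of a universal class $\U$ lies in $\w_\U$, and since $\uk = Mod(\tu)$ and $\wk = \w_\uk$, any subdirect product of models of $\tu$ is automatically in $\wk$. So the substantive content is to show that every $A \mo \tw$ is isomorphic to a subdirect product of objects of $\uk$.

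For this, the key idea is to apply Theorem~\ref{rep} to the atomic diagram $D^+A$ itself, regarded as an $a$-type of $A$. I would first check that $D^+A$ is closed and that the quotient $A/D^+A$ is canonically isomorphic to $A$: the equivalence relation $\sim$ reduces to ordinary equality in $A$, and the interpretation of each relation $R(\ov a)$ in the quotient agrees with its interpretation in $A$. Since $A \mo \tw$, Theorem~\ref{rep} then yields that $D^+A$ is radical, i.e.\ $D^+A = \trap{D^+A} = \bigcap_{\mf p \in \ms P} \mf p$, where $\ms P$ denotes the family of all prime $a$-types of $A$.

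Next I form the representation $f_{\ms P}: A \to \prod_{\mf p \in \ms P} A/\mf p$. Its $a$-type is precisely $\bigcap_{\mf p} \mf p = D^+A$, because an atomic sentence with parameters in $A$ holds in the product through $f_{\ms P}$ iff it holds in each quotient $A/\mf p$ through $f_\mf p$, iff it lies in every $\mf p$. The isomorphism Theorem~\ref{iso} then identifies $f_{\ms P}(A)$ with $A/D^+A \cong A$, so $f_{\ms P}$ is an embedding onto its image. Because each composition $A \to \prod A/\mf p \onto A/\mf p$ of $f_{\ms P}$ with a projection equals the surjective quotient map $f_\mf p$, the image $f_{\ms P}(A)$ projects onto every factor, so it is indeed a subdirect product. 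As each $A/\mf p$ is a model of $\tu$ by the very definition of prime $a$-type (\ref{PRAT}), this realises $A$ as a subdirect product of objects of $\uk$.

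There is no genuine obstacle here: the corollary is basically the representation theorem applied to the diagram, combined with the universal property of the quotient in Theorem~\ref{iso}. The only mildly delicate point is the translation between the hypothesis $A \mo \tw$ and the statement that $D^+A$ is $\t$-radical, which goes through the canonical identification $A \cong A/D^+A$.
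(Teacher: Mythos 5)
Your proposal is correct and follows essentially the same route as the paper: both reduce to the nontrivial inclusion via the remark on subdirect products, then apply Theorem~\ref{rep} to $D^+A$ (using $A/D^+A\simeq A$) to get that the representation $A\to\prod_{\mf p\in\ms P}A/\mf p$ is an embedding whose compositions with the projections are the surjective quotient maps. Your write-up merely spells out in more detail the identification of the $a$-type of the representation with $\bigcap_{\mf p}\mf p$, which the paper leaves implicit.
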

\begin{proof}
 A subdirect product of objects of $\uk$ is in $\w_{\uk}=\wk$, so we only need to prove that an object $A$ of $\wk$ is such
 a subdirect product. By the theorem, $\trap{D^+A}=D^+A$, because $A/D^+A\simeq A$, so the representation $g:A\to \prod_{\ms P} A/\mf p$
 of $A$ relatively to $\t$ is an embedding. As $g_\mf p\circ g$ is surjective for every $\mf p$
 ($g_\mf p$ is the $\mf p^{th}$ projection of the product), $A$ is a subdirect product of the $A/\mf p$'s, which are in $\uk$.
\end{proof}

\begin{ex}\label{EX5}
The class of semisimple $f$-rings is the quasivariety generated by real-closed ordered fields in
$\langle +,-,\times,\wedge,\vee,0,1\rangle$, the language of lattice-ordered rings (\cite{GAR}, 9.1.1 and 9.3.1).
\end{ex}

The second corollary to the representation theorem gives us an ``explicit'' description of universal models of presentations in $\wk$.

\begin{cor}\label{PRESWK}
 If $(X,P)$ is an $\ms L$-presentation and $A$ is the term algebra of the language $\ms L\sqcup X$, then $A/\trap P$ is presented
  in $\wk$ by $(X,P)$.
\end{cor}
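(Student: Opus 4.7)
My plan is to verify the three defining features of being \emph{presented by $(X,P)$ in $\wk$}: namely that $A/\trap P$ lies in $\wk$, that it is a model of the presentation (with $X$ interpreted via the projection) and generated by this interpretation, and that it satisfies the appropriate universal property among such models. Throughout, I would identify each constant of $X$ with its image as a closed term in $A$, so that $P$ is naturally read as an $a$-type of the $\ms L$-reduct of $A$.

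The first two points are essentially immediate. Since $\trap P$ is radical by construction, Theorem~\ref{rep} yields $A/\trap P\mo\tw$, hence $A/\trap P\in\wk$. The canonical projection $f_{\trap P}:A\to A/\trap P$ satisfies $\trap P$, and $P\subseteq\trap P$ by the very definition of the positive radical, so $f_{\trap P}\mo P$; interpreting each $x\in X$ in $A/\trap P$ as $f_{\trap P}(x)$ therefore yields an $\ms L\sqcup X$-structure that models $P$. Since $A$ is generated by $X$ as a term algebra and $f_{\trap P}$ is surjective, the quotient is generated by the image of $X$.

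The real content is the universal property, which I would establish in two stages. Given an $\ms L\sqcup X$-structure $B$ in $\wk$ modelling $P$ with interpretation $\iota:X\to B$, the freeness of the term algebra extends $\iota$ uniquely to an $\ms L$-homomorphism $g:A\to B$, and $B\mo P$ transfers to $g\mo P$, i.e. $P\subseteq tp_a(g)$. To conclude I would need $\trap P\subseteq tp_a(g)$, and for this it suffices to show that $tp_a(g)$ is already radical: it would then automatically contain the intersection of prime $a$-types above $P$. Here I would observe that $g$ factors through its image $g(A)\subseteq B$, which lies in $\wk$ because $\wk$ is a quasivariety and hence closed under substructures (Definition~\ref{QVAR}, Theorem~\ref{GENQV}); the isomorphism theorem~\ref{iso} then gives $A/tp_a(g)\simeq g(A)\in\wk$, so Theorem~\ref{rep} declares $tp_a(g)$ radical. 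The universal property of the quotient $f_{\trap P}$ (the proposition preceding Theorem~\ref{iso}) finally supplies the unique $\ms L$-homomorphism $h:A/\trap P\to B$ with $h\circ f_{\trap P}=g$, and this $h$ is automatically the unique $\ms L\sqcup X$-morphism sought, since $A/\trap P$ is generated by the image of $X$. The one genuine obstacle is the passage from ``contains $P$'' to ``contains $\trap P$'', which is exactly what the representation theorem was designed to deliver.
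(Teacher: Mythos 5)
Your proof is correct and follows essentially the same route as the paper's: both reduce the universal property to the observation that the $a$-type of the induced map $A\to B$ is radical (because its image is a substructure of $B\in\wk$, via the isomorphism theorem~\ref{iso} and Theorem~\ref{rep}) and contains $P$, hence contains $\trap P$. The only cosmetic difference is that the paper factors first through $A/P$ before passing to $A/\trap P$, whereas you factor directly.
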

\begin{proof}
 We look at $P$ as an $a$-type of $A$; by the representation theorem, the $\ms L$-reduct of $A/\trap P$ is in $\wk$ and is clearly a model of $P$. Let $f:X\to B$
  be a map into an object $B$ of $\wk$, such that $B^f=(B,fX)$ is a model of $(X,P)$. By universal property of the quotient $A/P$
   in $\ms L\sqcup X$, there exists a unique $\ms L\sqcup X$-homomorphism $\tilde f:A/P\to B$. 
    As $B$ is in $\wk$, the $a$-type of $\tilde f$ is radical and contains $P$, so it contains the smallest such one, i.e. $\trap\pi$.
    This means that $\tilde f$ factors itself uniquely through an $\ms L\sqcup X$-homomorphism $g:A/\trap\pi\to B^f$. This also
     means that $g$ is the unique $\ms L\sqcup X$-homomorphism from $A/\trap\pi$ into $B^f$. By definition, $A/\trap\pi$ is
     presented in $\wk$ by $(X,P)$.
\end{proof}

\section{Geometrically closed structures}\label{GCS}
   \subsection{Affine algebraic geometry in quasivarieties}\label{AFFQU}
    
 In~\cite{BP}, B.Plotkin defines the basic notions of algebraic geometry in varieties of algebras. He there speaks of
 ``quasi-identities'', which he uses as a ``signature'' of the rational points of a variety in a structure (lecture 3, section 2).
 Those are the motivation for our terminology of ``quasi-algebraic formulas'', and we introduce here analog considerations in the more
 general context of quasivarieties, i.e. elementary classes axiomatised by quasi-algebraic sentences.\\
 Let us begin with a few generalities. If $A$ is an $\ms L$-structure and $\ov x$ is a finite tuple of variables, then the
 term algebra in the language $\ms L\sqcup A\sqcup\ov x$ comes with a natural structure of $A$-algebra $A\to A[\ov x]$,
  which induces a natural expansion of $A[\ov x]$ to an $\ms L(A)$-structure.
 
\begin{defi}
 The structure $A[\ov x]$ will be called the \emph{($A$-)algebra of terms with parameters in $A$ and in variables $\ov x$}.
\end{defi}
In the quasivariety of all $\ms L$-structures, these algebras of terms are the ``po\-ly\-nomial algebras'' of section~\ref{CAR} : if $\ov b$ is an
$|\ov x|$-tuple of a structure $B$ and $f:A\to B$ is a homomorphism, there exists a unique homomorphism $\tilde f$ of $A$-algebras from
$A[\ov x]$ to $B$ such that $\tilde f(\ov x)=\ov b$. In particular, for every $|\ov x|$-tuple $\ov a$ in $A$, we have the evaluation
morphism $e_{\ov a}:A[\ov x]\onto A$, which $a$-type is $tp_a^A(\ov a)$, so $A[\ov x]/tp_a^A(\ov a)\simeq A$ by the isomorphism theorem
\ref{iso}.\\
We will develop in section~\ref{GCSS} an analog (in fact, a true generalisation) of algebraically closed fields, from a formalisation of Hilbert's Nullstellensatz. The ideals of rings have been formalised as ``$a$-types'', and a finitely generated
 ideal of a ring of polynomials in finitely many variables is formalised here as a ``finite'' $a$-type $\pi$ of an $A$-algebra of the
 form $A[\ov x]$. Notice that for the canonical expansion of $A[\ov x]$ to $\ms L(A)$, we have $A[\ov x]\mo D^+A$; in fact $D^+A$ axiomatises
 $D^+(A[\ov x])$ in $\ms L\sqcup A\sqcup\ov x$.\\
In this context, $\ms L$-structures will play the role of rings and $a$-types in finitely many variables the role
of sets of polynomials in finitely many variables. The analog of the set of zeros $\ms Z_A(T)$ in a ring $A$ of a
subset $T$ of a polynomial algebra in $n$ variables is the set $\pi(A)=\{\ov a\in A^{|\ov x|}
  : A\mo\bigwedge\pi(\ov a)\}$, where $\pi$ is an $a$-type of $A[\ov x]$. Likewise, the analog of the set of polynomials $\ms I(S)$ vanishing
 on a subset $S$ of a finite power $A^n$ of a ring $A$ is the set $tp_a^A(S)=\bigcap_{\ov a\in S} tp_a^A(\ov a)$, where
 $S\subset A^{|\ov x|}$.\\

Let now $\t$ be a quasi-algebraic theory, $\W=Mod(\t)$ and $A$ be a model of $\t$. We want to define affine algebraic varieties in
 $A$ and their morphisms, much in the same way as what is usually done in affine algebraic geometry. The most natural way of achieving
 this is to replace the finite sets of polynomial equations in rings by finite sets of atomic formulas.

  \begin{defi}
   If $n$ is a natural number, a subset $V$ of $A^n$ is an \emph{affine (algebraic) variety (over $A$)}, if there exists a
 \emph{finite} $a$-type $\pi$ of $A[x_1,\ldots,x_n]$ such that $V=\pi(A)$.
  \end{defi}

The definition of \emph{morphisms} between affine algebraic varieties should then be construed along these lines, i.e.
 morphisms should be defined by finite $a$-types defining a function in $A$ between two varieties, or equivalently by finite
 conjonctions of atomic formulas. Unfortunately, there is a difficulty in composing them if we define them in this way, because
 we should use an existential quantifier to build a formula defining the composite morphism. We would at least need to ``eliminate''
 those existential quantifiers, so we will work in a certain quasi-algebraic theory in order to apply the notions of section~\ref{CAR}.

\begin{defi}
 The \emph{quasi-algebraic diagram} of $A$, noted $Th_W(A|A)$, is the set of all quasi-algebraic sentences \emph{with parameters in $A$}
 satisfied in $A$, for the ``canonical'' expansion of $A$. 
\end{defi}
 The canonical expansion of $A$ is of course the interpretation of the additional symbol constants of $\ms L(A)$ by their
 corresponding elements in $A$. The theory $Th_W(A|A)$ is then an axiomatisation of a quasivariety noted $\W(A)$ in which
 we find (the canonical expansion of) $A$ and other forthcoming $A$-algebras. Notice indeed that $\t\cup D^+A\subset Th_W(A|A)$.

\begin{defi}
 An $A$-algebra $f:A\to B$ is \emph{reduced} if $B$ is in $\W$. We will note $\W_A$ the \emph{category} of reduced $A$-algebras.
\end{defi}
The reader should not merge $\W_A$ with $\W(A)$, but keep in mind that every object in $\W(A)$ has a natural structure
 of reduced $A$-algebra, so that $\W(A)$ embeds in $\W_A$. The equivalence of the two classes will be the subject of the next
 section~\ref{GCSS}.

 \begin{defi}\label{MORVAR}
  If $V=\pi_1(A)$ and $W=\pi_2(A)$ are two affine varieties of $A$, say $\pi_1\subset A[x_1,\ldots,x_n]=A[\ov x]$ and
  $\pi_2\subset A[y_1,\ldots,y_m]=A[\ov y]$, then a \emph{morphism} from $V$ into $W$ is a map $f:V\to W$, for which there exists
   a conjonction $\phi(\ov a,\ov x,\ov y)$ of atomic formulas (with parameters in $A$), with the following properties :
 \begin{itemize}
  \item $\phi$ is functional in $\ov x$ modulo $Th_W(A|A)$
  \item $Th_W(A|A)\mo\forall\ov x\ [\bigwedge\pi_1(\ov x)\iff \exists\ov y\ \phi(\ov x,\ov y)]$
  \item $Th_W(A|A)\cup \{\phi(\ov x,\ov y)\}\mo\pi_2(\ov y)$
  \item for all $(\ov b,\ov c)\in V\times W$, $f(\ov b)=\ov c\iff A\mo\phi(\ov b,\ov c)$.
 \end{itemize}    
 \end{defi}
In other words, we will keep for morphisms between varieties in $A$ only those con\-jonc\-tions of atomic formulas
 which define a functional relation modulo $Th_W(A|A)$, and not merely in $A$. This means that in contrast to the definition of
  affine varieties, the notion of \emph{morphism} is connected to what happens in $A$, and is not merely ``formal''; it even does not
  only depend on $\t$. As all this is valid when $\t=\emptyset$, the hypothesis that $A\mo\t$ is purely a matter of keeping
   in mind that we will deal with structures other than $A$, and we want to choose them in a specified quasivariety.\\

\begin{rem}\label{EQMOR}
 Notice that if two conjunctions of atomic formulas $\phi_1$ and $\phi_2$ with the same free variables define two
  morphisms $f_1$ and $f_2$ and are equivalent modulo $Th_W(A|A)$, then they define the same morphism in $A$.
\end{rem}

If $f:V\to W$ and $g:W\to Z$ are two morphisms, represented by $\phi(\ov x,\ov y)$ and $\psi(\ov y,\ov z)$, it is natural to represent
 the composition $g\circ f$ by the formula $\exists\ov y\ \phi(\ov x,\ov y)\wedge\psi(\ov y,\ov z)$;
 here lemma~\ref{CAR} will help eliminating the existential quantifier. 
  
  \begin{prop}
   The composition $g\circ f$ is a morphism, i.e. it is represented by a conjonction of atomic formulas.
  \end{prop}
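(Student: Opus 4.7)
The plan is to eliminate the existential quantifier in the natural candidate $\exists \ov y\, [\phi(\ov x,\ov y)\wedge\psi(\ov y,\ov z)]$ by invoking Lemma~\ref{QTERMS} separately on $\phi$ and $\psi$. This is legitimate because the quasi-algebraic diagram $Th_W(A|A)$ is a quasi-algebraic theory by construction.

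Applied to $\phi$ --- which is a conjunction of atomic formulas, functional in $\ov x$ modulo $Th_W(A|A)$, and satisfies $Th_W(A|A)\models\forall\ov x\ [\bigwedge\pi_1(\ov x)\iff\exists\ov y\ \phi(\ov x,\ov y)]$ --- Lemma~\ref{QTERMS} yields terms $\ov t(\ov x)$ (with parameters in $A$) such that $Th_W(A|A)\models\forall\ov x\ [\bigwedge\pi_1(\ov x)\iff\phi(\ov x,\ov t(\ov x))]$. Applied analogously to $\psi$, one obtains terms $\ov s(\ov y)$ with $Th_W(A|A)\models\forall\ov y\ [\bigwedge\pi_2(\ov y)\iff\psi(\ov y,\ov s(\ov y))]$. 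By the fourth clause of Definition~\ref{MORVAR}, this means $f$ acts on $V$ by evaluation of $\ov t$ and $g$ on $W$ by evaluation of $\ov s$.

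I then set $\theta(\ov x,\ov z):=\bigwedge\pi_1(\ov x)\wedge(\ov z=\ov s(\ov t(\ov x)))$, a conjunction of atomic formulas with parameters in $A$, and verify the four clauses of Definition~\ref{MORVAR} for it: functionality in $\ov x$ is trivial since $\ov z$ is forced by the equation; the equivalence $\bigwedge\pi_1(\ov x)\iff\exists\ov z\ \theta(\ov x,\ov z)$ modulo $Th_W(A|A)$ is immediate; the implication from $\theta$ to $\pi_3(\ov z)$ (with $\pi_3$ the $a$-type defining $Z$) is obtained by chaining the third clauses of Definition~\ref{MORVAR} first to $\phi$ (giving $\bigwedge\pi_2(\ov t(\ov x))$ from $\bigwedge\pi_1(\ov x)$) and then to $\psi$ at $\ov y=\ov t(\ov x)$ (giving $\bigwedge\pi_3(\ov s(\ov t(\ov x)))$); finally, for $(\ov b,\ov c)\in V\times Z$, $A\models\theta(\ov b,\ov c)$ expresses precisely that $\ov c=\ov s^A(\ov t^A(\ov b))=g(f(\ov b))$. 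By Remark~\ref{EQMOR}, $\theta$ is equivalent modulo $Th_W(A|A)$ to the naive formula $\exists\ov y\ [\phi(\ov x,\ov y)\wedge\psi(\ov y,\ov z)]$, confirming it represents the composition $g\circ f$.

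There is no serious obstacle once Lemma~\ref{QTERMS} is brought to bear: the essential feature of the quasi-algebraic setting is exactly that functional existential formulas can be rewritten through term substitution, whence composition of morphisms amounts to composition of the representing terms. The only point calling for a small amount of care is the double reliance on the quasi-algebraicity of $Th_W(A|A)$ rather than merely of the ambient theory $\t$, and the bookkeeping of parameters when applying the lemma.
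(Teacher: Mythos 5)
Your proof is correct and follows essentially the same route as the paper: both rest on Lemma~\ref{QTERMS} (applied to the quasi-algebraic theory $Th_W(A|A)$) to replace the existentially quantified functional variables by terms, the paper obtaining the representing conjunction $\phi(\ov x,\ov t(\ov x))\wedge\psi(\ov t(\ov x),\ov z)$ while you push one step further and also term-reduce $\psi$, arriving at $\bigwedge\pi_1(\ov x)\wedge(\ov z=\ov s(\ov t(\ov x)))$. That extra step in effect anticipates the paper's subsequent Lemma~\ref{REDMOR}, and your explicit verification of the four clauses of Definition~\ref{MORVAR} is a sound (if more detailed) version of what the paper leaves implicit.
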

\begin{proof}
 By hypothesis, $\phi$ is functional in $\ov x$ modulo $Th_W(A|A)$, so by Lemma~\ref{QTERMS}, there is an appropriate tuple of terms $\ov t(\ov x)$ with parameters in $A$, such that
    $\exists\ov y\ \phi(\ov x,\ov y)$ is equivalent to $\phi(\ov x,\ov t(\ov x))$ in $Th_W(A|A)$. This means that
     $\exists\ov y\ [\phi(\ov x,\ov y)\wedge\psi(\ov y,\ov z)]$ is equivalent to
      $\phi(\ov x,\ov t(\ov x))\wedge\psi(\ov t(\ov x),\ov z)$. This in turn entails that this last formula, functional
       in $\ov x$ modulo $Th_W(A|A)$, represents $g\circ f$.
 \end{proof}

In fact, all this shows that we may restrict ourselves to terms in order to define morphisms, in every such structure $A$.

\begin{lem}\label{REDMOR}
 Let $f:V\to W$ be a morphism. There exists a $|\ov y|$-tuple $\ov t(\ov x)$ of terms with parameters in $A$, such
 that for every $\ov a\in V$, we have $f(\ov a)=\ov t^A(\ov a)$.
\end{lem}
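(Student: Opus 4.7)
The plan is to apply Lemma~\ref{QTERMS} directly to the quasi-algebraic theory $Th_W(A|A)$ and the formula representing $f$. By Definition~\ref{MORVAR}, $f$ is encoded by a conjunction $\phi(\ov x,\ov y)$ of atomic formulas with parameters in $A$, functional in $\ov x$ modulo $Th_W(A|A)$, with
$$Th_W(A|A) \mo \forall \ov x \, [\bigwedge \pi_1(\ov x) \iff \exists \ov y \, \phi(\ov x,\ov y)].$$
Since $\pi_1$ is finite, $\theta(\ov x) := \bigwedge \pi_1(\ov x)$ is a finite conjunction of atomic formulas, and $Th_W(A|A)$ is by definition a set of quasi-algebraic $\ms L(A)$-sentences, hence a quasi-algebraic theory. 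All hypotheses of Lemma~\ref{QTERMS} are therefore met (in the language $\ms L(A)$), producing a tuple $\ov t(\ov x)$ of $\ms L(A)$-terms, that is, terms with parameters in $A$, such that
$$Th_W(A|A) \mo \forall \ov x \, [\bigwedge \pi_1(\ov x) \iff \phi(\ov x,\ov t(\ov x))].$$

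Specialising to $A$ finishes the argument. The canonical $\ms L(A)$-expansion of $A$ is itself a model of $Th_W(A|A)$, so for any $\ov a \in V = \pi_1(A)$ we have $A \mo \bigwedge \pi_1(\ov a)$, whence $A \mo \phi(\ov a, \ov t^A(\ov a))$. The fourth clause of Definition~\ref{MORVAR} then yields $f(\ov a) = \ov t^A(\ov a)$, exactly the desired conclusion.

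There is no genuine obstacle: the lemma is a packaging of the term-elimination principle of Lemma~\ref{QTERMS} adapted to morphisms of affine varieties. The only delicate point worth flagging is that one must work modulo the rich theory $Th_W(A|A)$ rather than merely modulo $\t$, because it is the specific behaviour of the base $A$ which forces $\exists \ov y \, \phi(\ov x,\ov y)$ to be equivalent to the quantifier-free formula $\bigwedge \pi_1(\ov x)$. This also explains why the representing terms $\ov t$ are allowed to carry parameters from $A$.
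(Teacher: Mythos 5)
Your proof is correct and follows essentially the same route as the paper: apply Lemma~\ref{QTERMS} to the quasi-algebraic theory $Th_W(A|A)$ to extract the terms $\ov t$, then evaluate in $A$. The only (cosmetic) difference is the last step: the paper concludes from $A\mo\phi(\ov a,f(\ov a))$ and $A\mo\phi(\ov a,\ov t^A(\ov a))$ via functionality of $\phi$ in $\ov x$, whereas you invoke the fourth clause of Definition~\ref{MORVAR}, which strictly speaking first requires noting that $\ov t^A(\ov a)\in W$ (immediate from the third clause).
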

\begin{proof}
 By lemma~\ref{QTERMS}, let $\ov t(\ov x)$ be a witness for $\phi(\ov x,\ov y)$ representing $f$, such that we have
 $Th_W(A|A)\mo\forall\ov x,\ov y\ \phi(\ov x,\ov t(\ov x))\iff \phi(\ov x,\ov y)$. If $\ov a\in V$, then $A\mo\phi(\ov a,f(\ov a))$,
 so by functionality of $\phi$ we have $f(\ov a)=\ov t^A(\ov a)$. 
\end{proof}
Every morphism $f:V\to W$ represented by $\phi(\ov x,\ov y)$ is then also defined by $\bigwedge\pi_1(\ov x)\wedge \bigwedge \pi_2(\ov x)
 \wedge \bigwedge_{j=1}^m y_j=t_j(\ov x)$, for some witnesses $t_j$'s.

The introduction of morphisms enables us to speak of the category $Aff_A$ of affine algebraic varieties over $A$, and we describe now the ``algebraic invariants''
 of affine varieties.
 
  \begin{defi}
 The \emph{affine coordinate algebra} $A[V]$ of $V=\pi_1(A)$ is the quotient of the term algebra
 $A[\ov x]$ by the $a$-type $tp_a^A(\pi_1(A))$.
 \end{defi}
 
Each coordinate algebra of the type $A[V]$ comes with its natural structure of $A$-algebra, which induces a canonical expansion to
$\ms L(A)$. In fact, in the category of $A$-algebras we have the factorisation $A[\ov x]\onto A[V]\into A^V$ induced by the product of
the morphisms $e_{\ov a}:A[\ov x]\to A$ corresponding to each point $\ov a$ of $V$, which also means that $A\to A[V]$ is in $\W_A$ (it is reduced).
 The structure $A^V$ is also equipped with its structure of $A$-algebra induced by the diagonal embedding and as $Th_W(A|A)$ is
 quasi-algebraic, it is stable under products and substructures, so the corresponding $\ms L(A)$-expansion of $A^V$ is in $\W(A)$, as well
 as $A[V]$.
  
  \begin{prop}
   The natural $\ms L(A)$-structure on $A[V]$ is presented in $\W(A)$ by $(\ov x,\pi_1)$.
  \end{prop}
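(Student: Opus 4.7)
The plan is to verify directly the universal property that defines a presentation in $\W(A)$ by $(\ov x, \pi_1)$. First I note what is already in hand from the paragraph above: the canonical $\ms L(A)$-expansion of $A[V]$ lies in $\W(A)$ (being a substructure of $A^V$, which belongs to $\W(A)$), and the distinguished $|\ov x|$-tuple $[\ov x]$ of $A[V]$ satisfies $\pi_1$. The latter is because each $\phi \in \pi_1$ holds on every $\ov c \in V = \pi_1(A)$, so $\pi_1 \subset tp_a^A(V)$, and $A[V] = A[\ov x]/tp_a^A(V)$ is by construction a model of $tp_a^A(V)$.

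For the universal property, let $B \in \W(A)$ and let $\ov b$ be an $|\ov x|$-tuple in $B$ with $B \mo \pi_1(\ov b)$. Since $A[\ov x]$ is the term $A$-algebra, there is a unique $\ms L(A)$-homo\-mor\-phism $g \colon A[\ov x] \to B$ with $g(\ov x) = \ov b$. By the universal property of the quotient $A[\ov x] \onto A[V]$, the homomorphism $g$ factors (necessarily uniquely) through $A[V]$ if and only if $tp_a^A(V) \subset tp_a(g)$; equivalently, for every atomic $\ms L(A)$-formula $\phi(\ov x)$ such that $A \mo \phi(\ov c)$ for all $\ov c \in V$, one needs $B \mo \phi(\ov b)$.

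This last step is where the definition of affine variety does its work: because $\pi_1$ is \emph{finite}, the sentence
\[
\forall \ov x \bigl[ \textstyle\bigwedge \pi_1(\ov x) \imp \phi(\ov x) \bigr]
\]
is a quasi-algebraic $\ms L(A)$-sentence. By hypothesis on $\phi$, it holds in $A$, hence belongs to $Th_W(A|A)$, and therefore holds in $B \in \W(A)$. Combined with $B \mo \pi_1(\ov b)$, this yields $B \mo \phi(\ov b)$, which is what we wanted. The induced homomorphism $h \colon A[V] \to B$ satisfies $h([\ov x]) = \ov b$, and its uniqueness comes for free from the fact that $A[V]$ is generated, as $\ms L(A)$-structure, by the interpretations of $\ov x$ (inherited from the term algebra $A[\ov x]$ via the surjection onto $A[V]$).

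The only conceptual obstacle is the pivotal step above — recognising that the finiteness of $\pi_1$, built into the definition of affine variety, is precisely what makes the implication $\bigwedge \pi_1 \imp \phi$ quasi-algebraic, and hence transferable from $A$ to an arbitrary $B$ in $\W(A)$ via $Th_W(A|A)$. The rest is routine: keeping track of which language ($\ms L(A)$ vs.\ $\ms L(A) \sqcup \ov x$) one is in, and applying Theorem~\ref{iso} and the universal properties of term algebras and of quotients by $a$-types as developed in section~\ref{FA}.
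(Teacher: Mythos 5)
Your proof is correct and follows essentially the same route as the paper's: both hinge on observing that for $\phi \in tp_a^A(V)$ the sentence $\forall\ov x\,[\bigwedge\pi_1(\ov x)\imp\phi(\ov x)]$ is quasi-algebraic (thanks to the finiteness of $\pi_1$), hence lies in $Th_W(A|A)$ and transfers to any $B$ in $\W(A)$, forcing $tp_a^A(V)\subset tp_a(f_{\ov b})$ and the unique factorisation through $A[V]$. You are somewhat more explicit than the paper about the preliminary facts ($A[V]\in\W(A)$, $[\ov x]$ satisfies $\pi_1$, uniqueness via generation), but these are exactly the points the paper delegates to the surrounding discussion.
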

\begin{proof}
 Suppose that $f:A\to B$ is the $A$-algebra structure corresponding to a model $B$ of $Th_W(A|A)$ and that $\ov b$ is an $n$-tuple of
 $B$ such that $B\mo\bigwedge\pi_1(\ov b)$. By the universal property of the term algebra $A[\ov x]$, there is a unique (evaluation)
 morphism $f_{\ov b}:A[\ov x]\to B$, $\ov x\mapsto \ov b$. If $\phi(\ov x)\in tp_a^A(V)$, we have
 $A\mo\forall\ov x\ [\bigwedge\pi_1(\ov x)\imp \phi(\ov x)]$, so this last sentence is in $Th_W(A|A)$, which means that $B\mo\phi(\ov b)$
  by hypothesis on $B$. We conclude that $tp_a^A(V)\subset tp_a^B(\ov b/A)$, where the last denotes the set of atomic $\ms L(A)$-formulas satisfied in
   $B$ by $\ov b$, i.e. the $a$-type of $f_{\ov b}$. Finally, $f_{\ov b}$ factorises uniquely through $A[V]$, which in turn entails that
    $(A[V],\ov x)$ is an initial model of the presentation $(\ov x,\pi_1)$ in $\W(A)$, generated by $\ov x$.
\end{proof}

Let now $f:V\to W$ be a morphism of varieties (with the same notations), represented by a conjonction of atomic $\ms L(A)$-atomic
formulas $\phi(\ov x,\ov y)$, which we may suppose by Lemma~\ref{REDMOR} to be of the form
$\pi_1(\ov x)\wedge\pi_2(\ov y)\wedge\bigwedge_{j=1}^m y_j=t_j(\ov x)$ for some $\ms L(A)$-terms $t_j$'s. By definition of a morphism
 and by the proposition, as the class of $\ov x$ is $A[V]$ satifies $\pi_1$, we also have $A[V]\mo\bigwedge \pi_2(\ov t(\ov x))$.
  By universal property of $A[W]$, there exists a unique homomorphism $A[f]$ of $A$-algebras from $A[W]$ into $A[V]$, such that
  $A[f](y_j)=t_j(\ov x)$ for $j=1,\ldots,m$, where $y_j$, $t_j$ and $\ov x$ are identified with their interpretations.

 \begin{prop}
 The rule $\mb A:Aff_A\to \W_A$, $V\mapsto A[V]$, $(f:V\to W)\mapsto (A[f]:A[W]\to A[V])$, is a full and faithful contravariant functor.
 \end{prop}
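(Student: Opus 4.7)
The plan is to verify functoriality, faithfulness, and fullness in that order, fullness being the most delicate. Fix $V=\pi_1(A)\subset A^n$ and $W=\pi_2(A)\subset A^m$. For functoriality, the identity on $V$ is represented by $\bigwedge\pi_1(\ov x)\wedge\bigwedge_{i=1}^n y_i=x_i$, so $\mathbf{A}(\mathrm{id}_V)$ sends the class of $y_i$ to the class of $x_i$, hence is the identity on $A[V]$. For composition, if $f:V\to W$ and $g:W\to Z$ are represented via Lemma~\ref{REDMOR} by formulas whose functional parts are $\ov y=\ov t(\ov x)$ and $\ov z=\ov s(\ov y)$, the proof of the preceding composition lemma gives $\phi(\ov x,\ov t(\ov x))\wedge\psi(\ov t(\ov x),\ov z)$ as a representative of $g\circ f$; then $A[g\circ f](z_k)$ is the class of $s_k(\ov t(\ov x))$, which equals $A[f](s_k(\ov y))=A[f](A[g](z_k))$, establishing contravariance. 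Faithfulness exploits the factorisation $A[\ov x]\onto A[V]\into A^V$ given by the evaluations $e_{\ov a}$ for $\ov a\in V$: if $A[f_1]=A[f_2]$ with term representatives $\ov t_1,\ov t_2$, the classes of $t_1^j(\ov x)$ and $t_2^j(\ov x)$ agree in $A[V]$, hence in $A$ at every $\ov a\in V$, so $f_1=f_2$.

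For fullness, let $h:A[W]\to A[V]$ be a morphism of $A$-algebras. Since $A[W]$ is generated over $A$ by the classes of $\ov y$, $h$ is determined by the tuple $h(\ov y)\in A[V]^m$, and one may choose terms $t_j(\ov x)$ with parameters in $A$ such that $h(y_j)$ is the class of $t_j(\ov x)$. Set
\[\phi(\ov x,\ov y):=\bigwedge\pi_1(\ov x)\wedge\bigwedge\pi_2(\ov y)\wedge\bigwedge_j y_j=t_j(\ov x).\]
Functionality of $\phi$ in $\ov x$ modulo $Th_W(A|A)$ and the entailment $Th_W(A|A)\cup\{\phi\}\mo\pi_2(\ov y)$ are immediate from the shape of $\phi$. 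The crucial requirement is
\[Th_W(A|A)\mo\forall\ov x\,\Bigl[\bigwedge\pi_1(\ov x)\iff\exists\ov y\,\phi(\ov x,\ov y)\Bigr].\]
The $\Leftarrow$ direction is trivial. For $\Rightarrow$, since $h$ is an $\ms L(A)$-homomorphism and $A[W]\mo\pi_2(\ov y)$, one has $A[V]\mo\pi_2(\ov t(\ov x))$; by the preceding proposition $A[V]$ is presented in $\W(A)$ by $(\ov x,\pi_1)$, so any atomic formula in $\ov x$ holding in $A[V]$ follows from $\pi_1(\ov x)$ in every model of $Th_W(A|A)$. Specialising to $A$ itself gives $\ov t^A(\ov a)\in W$ for each $\ov a\in V$, so $f(\ov a):=\ov t^A(\ov a)$ defines a morphism $V\to W$ represented by $\phi$, and $A[f]$ agrees with $h$ on the generators $\ov y$.

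The main obstacle is precisely the biconditional above: one must transfer an atomic fact established in the specific algebra $A[V]$ to every model of $Th_W(A|A)$ satisfying $\pi_1$, and this transfer is exactly what the presentation statement of the preceding proposition supplies. The rest of the verification consists of bookkeeping with term representatives, made possible by Lemma~\ref{REDMOR} which reduces every morphism to a tuple of terms.
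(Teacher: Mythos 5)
Your proof is correct and follows essentially the same route as the paper: faithfulness via the embedding $A[V]\into A^V$ (evaluation at points of $V$) and fullness by reading off terms $t_j(\ov x)$ from $h(y_j)$ and forming $\bigwedge\pi_1(\ov x)\wedge\bigwedge\pi_2(\ov y)\wedge\bigwedge_j y_j=t_j(\ov x)$. The only difference is that you spell out the verification that this formula satisfies the biconditional clause of Definition~\ref{MORVAR}, using the presentation of $A[V]$ in $\W(A)$ by $(\ov x,\pi_1)$ — a step the paper's proof leaves implicit — and that detail is supplied correctly.
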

\begin{proof}
Functoriality is clear.\\For faithfulness, suppose $f_1, f_2:V\to W$ are two morphisms and $\ov t_1$, $\ov t_2$ are two 
tuples of terms representing them by Lemma~\ref{REDMOR}. If $A\mo \forall\ov x\ [\bigwedge\pi_1(\ov x)\imp
 \ov t_1(\ov x)=\ov t_2(\ov x)]$, we have $f_1(\ov a)=f_2(\ov a)$ for all $\ov a\in V$. This means that if $f_1\neq f_2$,
 then $A\not\mo\forall\ov x\ [\bigwedge\pi_1(\ov x)\imp \ov t_1(\ov x)=\ov t_2(\ov x)]$, so 
 in particular $Th_W(A|A)\cup \pi_1\not\mo \ov t_1(\ov x)=\ov t_2(\ov x)$. As $A[V]\mo Th_W(A|A)$, we have $\ov t_1^{A[V]}(\ov x)
 \neq\ov t_2^{A[V]}(\ov x)$, so we have $A[f_1]\neq A[f_1]$ and $\mb A$ is faithful.\\
   For fullness, let $g:A[W]\to A[V]$ be a morphism of $A$-algebras : by definition of $A[V]$, for every $j=1,\ldots,m$ there
    is a term $t_j(\ov x)$ such that $g(y_j)=t_j(\ov x)$. The formula $\phi(\ov x,\ov y)=[\bigwedge\pi_1(\ov x)\wedge
    \bigwedge\pi_2(\ov y)\wedge\bigwedge_{j=1}^m y_j=t_j(\ov x)]$ defines a morphism $f$ from $V$ into $W$, such that $A[f]=g$ :
    $\mb A$ is full.
\end{proof}

We could say that all this amounts to defining affine algebraic varieties in $A$ through the coordinate algebras.
 The following section develops a formal analog to the property of ``geometric saturation'' in algebraically closed fields,
 which in some sense allows us to ``identify'' the affine algebraic geometry in $A$, with its expression in reduced $A$-algebras,
 or alternatively with the algebraic category of finitely presented such ones.
 
  \subsection{Geometrically closed structures}\label{GCSS}
For the remainder of this section, let $\t$ be a first order theory in $\ms L$ and $\K$ the elementary class of its models. The following lemma is a
pretext to introduce a relationship between quasi-algebraic sentences
  and the rational points for finite $a$-types.
\begin{lem}\label{RATYP}
If $A$ is in $\wk$ and $\pi$ is a finite $a$-type of $A[\ov x]$, then $tp_a^A(\pi(A))\supset \trap\pi$.
\end{lem}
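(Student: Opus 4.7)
The plan is to show that for each rational point $\ov a \in \pi(A)$, the $a$-type $tp_a(e_{\ov a})$ of the evaluation homomorphism $e_{\ov a}:A[\ov x]\to A$, $\ov x\mapsto \ov a$, is a radical $a$-type containing $\pi$. Once that is established, the inclusion $\trap\pi\subset tp_a(e_{\ov a})$ follows formally from the minimality of $\trap\pi$, and intersecting over all $\ov a\in\pi(A)$ gives the desired inclusion $\trap\pi\subset tp_a^A(\pi(A))$.

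Fixing $\phi\in\trap\pi$ and $\ov a\in\pi(A)$, I would first observe that by definition of $\pi(A)$ we have $A\mo\bigwedge\pi(\ov a)$, which translates exactly to $e_{\ov a}\mo\pi$, so $\pi\subset tp_a(e_{\ov a})$. Next, I would apply the isomorphism theorem (\ref{iso}) to identify $A[\ov x]/tp_a(e_{\ov a})$ with the image $e_{\ov a}(A[\ov x])$, which is a substructure of $A$. Since $A$ lies in the quasivariety $\wk$ and Definition~\ref{QVAR} ensures closure of $\wk$ under substructures (and isomorphic copies), the quotient $A[\ov x]/tp_a(e_{\ov a})$ is a model of $\tw$. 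Theorem~\ref{rep} then tells us that $tp_a(e_{\ov a})$ is radical.

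To conclude, I would invoke the general fact that a radical $a$-type $\sigma\supset\pi$ always contains $\trap\pi$: by definition of radicality, $\sigma$ is an intersection of $\t$-prime $a$-types, each of which contains $\sigma\supset\pi$ and therefore contains the smaller intersection $\trap\pi$. Applied to $\sigma=tp_a(e_{\ov a})$ this yields $\phi\in tp_a(e_{\ov a})$, which is precisely $A\mo\phi(\ov a)$, i.e. $\phi\in tp_a^A(\ov a)$. Since $\ov a\in\pi(A)$ was arbitrary, $\phi\in\bigcap_{\ov a\in\pi(A)}tp_a^A(\ov a)=tp_a^A(\pi(A))$.

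I do not expect any real obstacle: the finiteness hypothesis on $\pi$ is not needed in this direction, and the argument reduces to a clean application of the representation theorem. The only step that requires a little care is the identification, via the isomorphism theorem, of the quotient $A[\ov x]/tp_a(e_{\ov a})$ with a substructure of $A$, so that membership in the quasivariety $\wk$ transfers to the quotient and Theorem~\ref{rep} becomes applicable.
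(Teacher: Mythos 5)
Your proof is correct and takes essentially the same route as the paper's: both arguments come down to the observation that each point type $tp_a^A(\ov a)=tp_a(e_{\ov a})$ has quotient isomorphic (via Theorem~\ref{iso}) to a substructure of $A\in\wk$, hence is radical (equivalently $\tw$-prime) and therefore contains $\trap\pi$, so the intersection $tp_a^A(\pi(A))$ does too. The paper compresses this by invoking the identity $\trap\pi=\sqrt[\tw]{\pi}^+$, but the content is identical, and you are right that the finiteness of $\pi$ plays no role in this direction.
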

\begin{proof}
Suppose that $\pi$ is a finite $a$-type of $A[\ov x]$ and $V=\pi(A)$. For every $\ov a\in V$, $tp_a^A(\ov a)$ contains $\pi$ so
$tp_a^A(V)=\bigcap_{\ov a\in V} tp_a^A(\ov a)$ contains $\pi$. By definition of the radical of an $a$-type, we thus have $\sqrt[\tw]{\pi}^+
\subset tp_a^A(V)$. As we have $\trap\rho=\sqrt[\tw]{\rho}^+$ for every $a$-type $\rho$, the lemma is proved.
\end{proof}

In some sense the ``geometric'' version of Hilbert's Nullstellensatz (Theorem~\ref{HILNUL}) for a field $K$ says that all polynomial equations which are
 a consequence of a finite set $\mc S$ of such polynomials, are already true in $K$ for the rational points of $S$. This led B.Plotkin
  to look at the preservation of quasi-algebraic sentences in \emph{extensions} of structures (\cite{BP}, L.3, 2), an idea
   which is already present in \cite{WW} for individual quasi-algebraic sentences. Here we introduce the same approach for algebras and
   whole sets of quasi-algebraic sentences.
\begin{defi}
\begin{itemize}
 \item A homomorphism $f:A\to B$ between two $\ms L$-structures will be called \emph{geometrically closed}, if for every
  quasi-algebraic sentence $\phi(\ov a)$ with parameters in $A$ such that $A\mo\phi(\ov a)$, then $B\mo\phi(f\ov a)$.
 \item If $\K$ is a class of $\ms L$-structures and $A$ is an object of $\K$, then we will say that $A$ is \emph{geometrically closed in $\K$}
  if every $A$-algebra in $\K$ is geometrically closed.
\end{itemize} 
\end{defi}

Notice that $f:A\to B$ being geometrically closed means exactly that $(B,f)\mo Th_W(A|A)$, if $(B,f)$ denotes the $\ms L(A)$-expansion
 of $B$ induced by $f$. In the preceding section, we worked in the class $\W(A)$, which was a ``syntactic'' equivalent to
  the subcategory of $\W_A$ consisting of geometrically closed $A$-algebras. Saying that $A$ is geometrically closed in $\wk$
   thus amounts to saying that $Th_W(A|A)$ is axiomatised by $T\cup D^+A$. This may be expressed in two ways, using either
    the radicals or the finitely presented reduced $A$-algebras.

\begin{thm}\label{CHARGC}
 Let $A$ be a model of $\tw$. $A$ is geometrically closed in $\wk$ if and only if $A$ satisfies ``Hilbert's Nullstellensatz
 relatively to $\t$'', in other words if for every finite tuple $\ov x$ of variables and every finite $a$-type
 $\pi$ of $A[\ov x]$, we have $tp^A_a(\pi(A))=\trap\pi$.
\end{thm}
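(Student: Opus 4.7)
The plan is to use Theorem~\ref{rep}---which says a closed $a$-type $\pi$ of an $\ms L$-structure is radical exactly when the quotient by $\pi$ lies in $\wk$---as the bridge between the geometric side (zeros in $A$) and the algebraic side (radicals of $a$-types). Lemma~\ref{RATYP} already supplies one inclusion, $\trap\pi\subseteq tp_a^A(\pi(A))$, in all cases, so in each direction only the reverse inclusion (or its analogue) is at stake.

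For the forward direction, assume $A$ is geometrically closed in $\wk$ and let $\pi$ be a finite $a$-type of $A[\ov x]$. By Theorem~\ref{rep}, the quotient $A[\ov x]/\trap\pi$ belongs to $\wk$, and the canonical composite $A\to A[\ov x]\onto A[\ov x]/\trap\pi$ is an $A$-algebra in $\wk$; by hypothesis it satisfies every quasi-algebraic sentence in $Th_W(A|A)$. Given $\phi(\ov x)\in tp_a^A(\pi(A))$, the sentence $\forall\ov x\ [\bigwedge\pi(\ov x)\imp\phi(\ov x)]$ is quasi-algebraic with parameters in $A$ and holds in $A$, hence also in $A[\ov x]/\trap\pi$. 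Since $\pi\subseteq\trap\pi$, the class $[\ov x]$ satisfies $\pi$ in the quotient, so it satisfies $\phi$ there as well; but this is exactly $\phi(\ov x)\in\trap\pi$.

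For the converse, suppose $tp_a^A(\pi(A))=\trap\pi$ holds for every finite $a$-type $\pi$, and let $f:A\to B$ be an $A$-algebra with $B\in\wk$. Pick a quasi-algebraic sentence $\phi(\ov a)=\forall\ov y\ [\bigwedge\Phi(\ov a,\ov y)\imp\psi(\ov a,\ov y)]$ true in $A$ and any tuple $\ov b\in B^{|\ov y|}$ with $B\mo\bigwedge\Phi(f\ov a,\ov b)$. Extending $f$ uniquely to $g:A[\ov y]\to B$ by $\ov y\mapsto\ov b$ yields $\Phi\subseteq tp_a(g)$; by Theorem~\ref{iso}, $A[\ov y]/tp_a(g)\simeq g(A[\ov y])$, which is a substructure of $B\in\wk$, hence in $\wk$. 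Theorem~\ref{rep} then forces $tp_a(g)$ to be radical, so $\trap\Phi\subseteq tp_a(g)$. By the hypothesis, $\psi(\ov a,\ov y)\in tp_a^A(\Phi(A))=\trap\Phi\subseteq tp_a(g)$, whence $B\mo\psi(f\ov a,\ov b)$. As $\ov b$ was arbitrary, $B\mo\phi(f\ov a)$, and $f$ is geometrically closed.

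The main difficulty is not computational but conceptual: one must recognise that $\trap\pi$ should be accessed through the ``universal reduced quotient'' $A[\ov x]/\trap\pi$ provided by Theorem~\ref{rep}, and symmetrically that for any $A$-algebra $g$ into a structure of $\wk$ the kernel $tp_a(g)$ is automatically radical. Once this viewpoint is adopted, each direction becomes a short translation between the syntactic preservation of quasi-algebraic sentences on one side and the semantic description of radical $a$-types on the other.
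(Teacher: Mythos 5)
Your proof is correct and follows essentially the same route as the paper's: both directions hinge on Lemma~\ref{RATYP}, Theorem~\ref{rep} and Theorem~\ref{iso}, translating between quasi-algebraic sentences over $A$ and membership in radical $a$-types of $A[\ov x]$. The only (cosmetic) difference is that in the forward direction you apply geometric closedness once to the single reduced quotient $A[\ov x]/\trap\pi$, whereas the paper applies it to each prime quotient $A[\ov x]/\mf p$ with $\mf p\supseteq\pi$; both arguments are valid.
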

\begin{proof}
 Suppose that $A$ is geometrically closed, and let $\pi$ be as in the statement of the theorem. As $A\mo\tw$, we already know
 that $tp_a^A(\pi(A))\supset\trap\pi$ by Lemma~\ref{RATYP}. A formula $\phi(\ov a, \ov x)$ (i.e., defined in $A[\ov x]$)
  is in $tp_a^A(\pi(A))$ if and only if $A\mo\forall\ov x\bigwedge \pi(\ov a,\ov b,\ov x)\imp \phi(\ov a,\ov x)$
   ($\ov b$ is the extra tuple of parameters of $A$ different from $\ov a$ and appearing in $\pi$). In this case, this last sentence $\chi$
    is quasi-algebraic and the quotients $A[\ov x]/\mf p$ are in $\wk$, because the $\mf p$'s are radical (Theorem~\ref{rep}).
     Remembering that $f_\mf p:A[\ov x]\onto A[\ov x]/\mf p$ is the quotient morphism, $f_\mf p$ is geometrically closed,
      because $A$ is, so for every $\mf p$, we have $A[\ov x]/\mf p\mo\chi^{f_\mf p}$. This exactly means that $\phi\in\trap\pi$,
       because then the equivalence class of the tuple $\ov x$ satisfies $\phi$ in every $A[\ov x]/\mf p$.\\
      Conversely, suppose that $tp^A_a(\pi(A))=\trap\pi$ for every finite $a$-type $\pi$ in finitely many variables over $A$.
      Let $f:A\to B$ be a homomorphism in $\wk$, and suppose that $\chi=\forall\ov x\bigwedge\pi(\ov a,\ov x)\imp\phi(\ov a,\ov x)$
       is a quasi-algebraic sentence with parameters in $A$ and true in $A$ : this means that $\phi\in tp_a^A(\pi(A))$, so
        by hypothesis $\phi\in\trap\pi$. Now let $\ov b$ be an $|\ov x|$-tuple of $B$ satisfying $\pi(f\ov a,\ov x)$ :
         the $a$-type $\mf p$ of all atomic formulas with parameters in $A$ and satisfied by $\ov b$ in $B$ with respect to $f$,
          is a radical $a$-type of $A[\ov x]$ containing $\pi$, because the quotient $A[\ov x]/\mf p$ is isomorphic
           to the substructure of $B$ generated by $f(A)$ and $\ov b$, and this last is in $\wk$. In short, we have $\phi\in \mf p$,
            so $B\mo\chi^f$ : $f$ is geometrically closed, as is thus $A$.
\end{proof}

In the lemma and the theorem, we only needed the \emph{radical} $a$-types, which
 means that we could have worked directly within a quasi-algebraic theory $\t$, with $\K=\wk$, without even changing the notion of
 radical $a$-types. This is strangely analogous to the minimal approach of positive model theory, where I.Ben Yaacov looks
 at existentially closed models of a $\Pi$-theory (\cite{BY}, 1.1), and which we will quickly mention in section~\ref{ST}.
 Besides, the interest of starting from an arbitrary theory $\t$ will become clear in the sequel. Let us note for the moment the
 following consequences of geometric completeness.

\begin{thm}\label{DUAL}
 If $A$ is a geometrically closed object of $\W=\wk$, then the affine coordinate algebra functor $\mb A$ introduced in~\ref{AFFQU}
 is a duality between the categories $Aff_A$ and $\W_{A}^{fin}$, the category of \emph{finitely presented reduced $A$-algebras}. 
\end{thm}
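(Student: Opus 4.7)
The plan is to build on the preceding proposition, which already shows that $\mb A$ is full and faithful; all that remains is to identify its essential image with $\W_A^{fin}$. The pivotal observation is that geometric closedness of $A$ in $\wk$ collapses the category $\W_A$ of reduced $A$-algebras onto the category $\W(A)$: indeed, any reduced $A$-algebra $f:A\to B$ is by hypothesis geometrically closed, which exactly means $(B,f)\mo Th_W(A|A)$, hence $(B,f)\in\W(A)$; the converse inclusion is tautological. Under this identification, ``finitely presented in $\W_A$'' and ``finitely presented in $\W(A)$'' coincide.

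Next I would check that $\mb A$ actually lands in $\W_A^{fin}$. If $V=\pi_1(A)$ is an affine variety, the Proposition of~\ref{AFFQU} exhibits $A[V]$ as presented in $\W(A)$ by the finite presentation $(\ov x,\pi_1)$, which by the identification above means $A[V]\in \W_A^{fin}$.

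For essential surjectivity, let $B\in\W_A^{fin}=\W(A)^{fin}$, with finite presentation $(\ov x,\pi)$ in $\W(A)$ by an $a$-type $\pi$ of $A[\ov x]$. Setting $V:=\pi(A)\subset A^{|\ov x|}$, which is an affine variety by definition, the same Proposition in~\ref{AFFQU} applied to this $V$ shows that $A[V]$ is also presented in $\W(A)$ by $(\ov x,\pi)$. Uniqueness of initial models of a presentation then yields an $A$-algebra isomorphism $B\simeq A[V]=\mb A(V)$, which combined with fullness and faithfulness establishes the duality.

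The main step, and the only one where the hypothesis truly enters, is the identification $\W_A\simeq \W(A)$: without geometric closedness, the ``syntactic'' category $\W(A)$ built in~\ref{AFFQU} from the quasi-algebraic diagram would be strictly smaller than $\W_A$, and the functor $\mb A$ would fail to hit every finitely presented reduced $A$-algebra. Once that identification is secured, the proof is a direct transfer of the universal property of presentations already developed throughout section~\ref{AFFQU}, so no substantial new calculation is needed.
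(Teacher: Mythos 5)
Your proof is correct, and it reaches the key isomorphism $B\simeq A[V]$ by a slightly different road than the paper. The paper's proof invokes Corollary~\ref{PRESWK} to write a finitely presented reduced $A$-algebra as $A[\ov x]/\trap\pi$ for a finite $a$-type $\pi$, and then applies the Nullstellensatz characterisation of geometric closedness (Theorem~\ref{CHARGC}) to get $tp_a^A(\pi(A))=\trap\pi$, whence $A[V]\simeq A[\ov x]/\trap\pi\simeq B$. You instead use geometric closedness in its raw form --- every reduced $A$-algebra is a model of $Th_W(A|A)$, so $\W_A$ is identified with $\W(A)$ --- and then conclude by uniqueness of initial models of the presentation $(\ov x,\pi)$ in $\W(A)$, citing the Proposition of~\ref{AFFQU} for the fact that $A[V]$ is one such initial model. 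The two arguments are two faces of the same coin, since Theorem~\ref{CHARGC} is precisely the equivalence between these two readings of the hypothesis; yours has the small advantage of never mentioning the radical $\trap\pi$, at the price of having to check that ``finitely presented in $\W_A$'' and ``finitely presented in $\W(A)$'' agree, which you correctly reduce to the identification of the ambient classes of algebras. Both proofs lean on the same preceding proposition for fullness and faithfulness, so the overall architecture is the same.
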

\begin{proof}
 Of course, a reduced $A$-algebra $B$ is finitely presented as such if there exists a finite set $X$ and a finite set
 of atomic sentences $P$ in $\ms L\sqcup A \sqcup X$, such that $B$ is presented in $\W$ by $(X,P\cup D^+A$). In other words,
  if $B$ is such an algebra, by Corollary~\ref{PRESWK} it is isomorphic to a quotient of the form $A[\ov x]/\trap\pi$, where $\ov x$
   is a finite tuple of variables and $\pi$ is a finite $a$-type. Let $V=\pi(A)$ be the affine algebraic variety
    defined in $A$ by $\pi$; by geometric completeness of $A$, we have $tp_a^A(\pi(A))=\trap\pi$, whereby
     $A[V]\simeq A[\ov x]/\trap\pi \simeq B$. Full and faithful $\mb A$ is essentially surjective on $\W_{A}^{fin}$,
      it is a duality. 
\end{proof}

We note a couple of things. Firstly, this section sheds some light on Hilbert's Nullstellensatz and analogous results, in that prime
 ideals have not so much to do with this property, at least from this point of view ! Indeed, algebraically closed fields are
 just geometrically closed \emph{semiprime} rings \emph{which are fields}.\\
 Secondly, if we start with a quasi-algebraic theory $\t$, what can we say about the subclass of geometrically closed models of $\t$ ?
  Is this an axiomatisable class, and if so, what may be said about such an axiomatisation ?
     Here positive model theory, or positive model completeness, comes to our rescue, but at the expense of a slight hypothesis on $\t$.
      Before we can handle this, we have to make a detour by the original Nullstellensatz, in order to characterise
       algebraically closed fields among non-trivial rings. It will appear later that the only geometrically closed
        semiprime ring which is not a field is the trivial ring, and this has a logical significance.

\section{Geometric completeness}\label{GC}

  \subsection{Some characterisations of algebraically closed fields}\label{CACF}
  G.Cherlin proved an affine analog of Hilbert's Nullstellensatz derving from the notion of model-completeness of a theory of rings
 (see section~\ref{HCNS}).
   It may be used to \emph{characterise} algebraically closed fields among \emph{integral domains} as follows. If $A$ is an integral
    domain and $X$ is a set of variables, say that a prime ideal $\mf p$ of $A[X]$ is \emph{strongly prime} if $A[X]/\mf p$ is
     an extension of $A$ : in the language of section~\ref{HCNS}, if $\t$ is the theory of algebraically closed fields and $I$ is an
     ideal of $A[X]$, then $\trad I$ is the intersection of strongly prime ideals of $A[X]$ containing $I$.
\begin{thm}\label{NSACF}
 Let $A$ be an integral domain and $\t$ be the theory of algebraically closed fields. The following are equivalent :
\begin{enumerate}
 \item $A$ is an algebraically closed field
 \item For every $n\in \N$ and every finitely generated ideal $I$ of $A[X_1,\ldots,X_n]$, one has $\ms I(\ms Z_A(I))=\sqrt I$
 \item For every $n\in \N$ and every finitely generated ideal $I$ of $A[X_1,\ldots,X_n]$, one has $\ms I(\ms Z_A(I))=\trad I$.
\end{enumerate}
\end{thm}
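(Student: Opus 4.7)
The plan is to prove the three implications $(1)\Rightarrow (2)\Rightarrow (3)\Rightarrow (1)$. The first implication is literally the classical geometric Nullstellensatz (Theorem~\ref{HILNUL}) recalled in section~\ref{HCNS}, so there is nothing to do there.

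For $(2)\Rightarrow (3)$, I want to sandwich $\trad{I}$ between $\sqrt{I}$ and $\ms I(\ms Z_A(I))$. Since any strongly prime ideal is prime, the intersection defining $\trad{I}$ ranges over a \emph{subset} of the primes containing $I$, so $\sqrt{I}\subseteq\trad{I}$. In the other direction, every rational point $\ov a\in\ms Z_A(I)$ gives an evaluation morphism $e_{\ov a}:A[\ov X]\to A$, which is surjective and restricts to the identity on $A$; its kernel is thus a prime ideal with trivial intersection with $A$, i.e.\ a strongly prime ideal containing $I$. Hence $\trad{I}\subseteq \bigcap_{\ov a\in \ms Z_A(I)}\ker(e_{\ov a})=\ms I(\ms Z_A(I))$. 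Combining these inclusions with the hypothesis $\ms I(\ms Z_A(I))=\sqrt{I}$ pinches everything together, giving (3).

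The main work is in $(3)\Rightarrow (1)$, which I will split in two: first that $A$ is a field, then that it is algebraically closed. Both steps rest on the same trick: construct a strongly prime ideal by evaluating $A[X]$ into the algebraic closure $\ov K$ of the fraction field $K=\mathrm{Frac}(A)$. For the field step, let $a\in A\setminus\{0\}$ and set $I=(aX-1)\subseteq A[X]$. If $a$ were not invertible in $A$, then $\ms Z_A(I)=\emptyset$ and $\ms I(\ms Z_A(I))=A[X]$. But the evaluation $A[X]\to\ov K$, $X\mapsto a^{-1}$, has a kernel $J$ that is strongly prime (the restriction $A\hookrightarrow\ov K$ is injective), contains $I$, and is proper; hence $\trad{I}\subseteq J\neq A[X]$, contradicting (3). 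So $A$ is a field. For the algebraic closure step, let $P\in A[X]$ be non-constant and set $I=(P)$. If $P$ had no root in $A$, then again $\ms I(\ms Z_A(I))=A[X]$, while any root $\alpha\in\ov K$ of $P$ yields an evaluation $A[X]\to\ov K$, $X\mapsto\alpha$, whose kernel is a strongly prime proper ideal containing $I$. This again contradicts (3), so $P$ has a root in $A$.

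The delicate point, and the one I expect to require the most care, is the $(3)\Rightarrow(1)$ direction: one has to realise that the role of ``strongly prime ideal'' in the definition of $\trad{\cdot}$ is played concretely by kernels of evaluations into $\overline{\mathrm{Frac}(A)}$, and that the non-existence of rational points in $A$ versus the existence of \emph{geometric} points in an algebraically closed extension is precisely what forces $A$ to be closed under inversion and under taking roots of polynomials. The remaining implications are essentially bookkeeping, made possible by the fact that for $\t$ equal to ACF, $\tu$ is the theory of integral domains, so that ``$\t$-radical'' is intersection over prime ideals $\mf p$ with $A[\ov X]/\mf p$ an integral domain extending $A$.
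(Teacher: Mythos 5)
Your proof is correct and follows essentially the same route as the paper: the Nullstellensatz for $(1)\Rightarrow(2)$, the sandwich $\sqrt{I}\subseteq\trad{I}\subseteq\bigcap_{\ov a\in\ms Z_A(I)}\ker(e_{\ov a})=\ms I(\ms Z_A(I))$ for $(2)\Rightarrow(3)$, and for $(3)\Rightarrow(1)$ the production of proper strongly prime ideals as kernels of evaluations into an extension of $A$, first at $a^{-1}$ to get invertibility and then at a root to get algebraic closure. The only (harmless) deviation is that in the closure step you evaluate an arbitrary non-constant $P$ at a root in $\overline{\mathrm{Frac}(A)}$, whereas the paper takes $P$ to be the minimal polynomial of an element of an algebraic extension and concludes that the extension is trivial.
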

\begin{proof}
 (1)$\imp$(2) This is a version of Hilbert's Nullstellensatz (Theorem~\ref{HILNUL}).\\
 (2)$\imp$(3) In general, when $A$ is an integral domain, for every $x\in\ms Z(I)$ the ideal $Ker\ (e_x)$ is strongly prime and contains $I$, so
 one has $\sqrt \I \subseteq \trad \I\subseteq \bigcap_{\ms Z(I)} Ker\ (e_x)=\ms I(\ms Z(I))$. Under the hypotheses
 of (2) then, all three inclusions are equalities, which in particular entails (3).\\
(3)$\imp$(1) We first show that $A$ is a field. Let $f:A\into F$ be the embedding into the fraction field of $A$ : if $a\in A$ is not zero, there exists an inverse $b$
 of $f(a)$ in $F$. Consider the canonical embedding $i:A\into A[X]$ and the evaluation morphism $f_b$ in $b$ above $f$, so that we have
 $f_b\circ  i=f$. By choice of $b$ we get $(aX-1)\subseteq Ker\ (f_b)$ and moreover $Ker(f_b)$ is stronlgy prime, as $F$ is an integral domain and
 $Ker\ (f_b)\cap i(A)=\ (0)$. We conclude that $\trad{(aX-1)}\subseteq Ker\ (f_b)\neq (1)$, hence by hypothesis
 $\ms I(\ms Z_A(aX-1))=\trad{(aX-1)}\neq\emptyset$, which is equivalent to saying that $\ms Z_A(aX-1)\neq \emptyset$, as $A$ is a non-trivial ring. This
 is turn means that $a$ is invertible in $A$, so $A$ is a field.\\
Finally we show that $A$ is algebraically closed. Let $f:A\into K$ be an algebraic extension of $A$, and $\alpha\in K$, of minimal polynomial $P$
 over $f$. As as $(P)\cap i(A)=(0)$, the ideal $(P)$ is strongly prime, so we get $\trad{(P)}=(P)$. By the hypothesis this means that $\ms I(\ms Z_A(P))=(P)
\neq (1)$ which again means that $(P)$ has a rational point $a$ in $A$. Necessarily we have $P=X-a$, whereby $f(a)=\alpha$, so $f$ is surjective and in
 fact is an isomorphism : $A$ is algebraically closed. 
\end{proof}

Hilbert's famous theorem deals with the \emph{algebraic} radical of ideals, and not the $\t$-radical of Cherlin,
 which selects only those primes which are the ideals of rational points in \emph{extensions} of the basic domain : this is natural
  in view of the classical model-theoretic approach, which deals with \emph{extensions} of structures. If we want a true generalisation
  of the original theorem in the same spirit, we must
  suppress the restriction on the prime ideals : this may be done in the special case of rings (\cite{JB}, chapter II) and here it is formally
  achieved through the theory of prime $a$-types, presented in section~\ref{PRT}. Algebraically, this means that we have to replace
  the \emph{embeddings} of classical model theory by the \emph{homomorphisms} of positive model theory. A ``positive'' version of the
  preceding characterisation will then give us the key to connect positive model-completeness and what we will call
  ``geometric completeness''.

\begin{thm}\label{CHACF}
 For a non-trivial ring $A$, the following assertions are equivalent :
\begin{enumerate}
 \item $A$ is an algebraically closed field
 \item For every $n\in \N$, for any ideal $I$ of $A[X]=A[X_1,\ldots,X_n]$, $\ms I(\ms Z_A(I))=\sqrt I$
 \item For every $n\in \N$, for any ideal $I$ of finite type of $A[X]=A[X_1,\ldots,X_n]$, $\ms I(\ms Z_A(I))=\sqrt I$
 \item $A$ is positively existentially closed in the class of non-trivial rings.
\end{enumerate}
\end{thm}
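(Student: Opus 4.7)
The plan is to establish the cycle (1)$\imp$(2)$\imp$(3)$\imp$(1) first, which characterises algebraically closed fields through the classical Nullstellensatz without the ``strongly prime'' restriction of Theorem~\ref{NSACF}, and then prove (1)$\iff$(4) separately to link this characterisation with positive existential closure. The implication (1)$\imp$(2) is exactly Theorem~\ref{HILNUL}, and (2)$\imp$(3) is immediate since (3) is just (2) restricted to finitely generated ideals.

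For (3)$\imp$(1) I would argue in three steps. \emph{Reducedness:} suppose $a\neq 0$ is nilpotent, $a^n=0$; apply (3) to $I=(X)\subset A[X]$. Then $\ms Z_A(I)=\{0\}$ and so $\ms I(\ms Z_A(I))=(X)$, but $(a+X)^n$ has constant term $a^n=0$, so $a+X\in\sqrt I\setminus (X)$, a contradiction. \emph{Field property:} for $a\neq 0$, apply (3) to $I=(aX-1)$; if $a$ is not invertible then $\ms Z_A(I)=\emptyset$, hence $\sqrt I=A[X]$, hence $aX-1$ is a unit of $A[X]$, and a direct identification of the coefficients of the inverse forces $a^{n+1}=0$, contradicting the previous step. \emph{Algebraic closure:} for non-constant $P\in A[X]$, the absence of a root would give $\sqrt{(P)}=A[X]$, making $P$ a unit in $A[X]$, which is impossible in a polynomial ring over a field.

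For (1)$\imp$(4), given an ACF $A$ and a ring homomorphism $f:A\to B$ with $B$ non-trivial, the condition $f(1)=1\neq 0$ forces $f$ injective. To see $f$ is an immersion, fix any prime $\mf p$ of $B$ and embed $B/\mf p$ into an algebraic closure $K$ of $\mathrm{Frac}(B/\mf p)$; any positive existential sentence with parameters in $A$ witnessed in $B$ lifts to $K$ by preservation under homomorphisms, and then descends to $A$ by the classical model-completeness of ACF since $A\subset K$. For (4)$\imp$(1) I would mirror the steps above but exchange the geometric hypothesis for the immersion property. \emph{Reducedness:} if $a\neq 0$ is nilpotent, then $a$ is not a unit, so $A/(a)$ is non-trivial; the immersion $A\onto A/(a)$ would reflect the atomic $a=0$ true in $A/(a)$, contradicting $a\neq 0$. \emph{Field property:} for $a\neq 0$ (hence non-nilpotent), Krull's theorem gives a prime $\mf p\not\ni a$; composing with an embedding into an ACF $K$ produces $f:A\to K$ with $f(a)\neq 0$, and the coherent formula $\exists y\ (ay=1)$ true in $K$ reflects back through the immersion. \emph{Algebraic closure:} for non-constant $P\in A[X]$, the ring $B=A[X]/(P)$ is non-trivial (as $P$ is not a unit over the field $A$), and the class of $X$ is a root of $P$ in $B$; the immersion $A\to B$ reflects $\exists y\ (P(y)=0)$ to $A$.

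The main obstacle is the treatment of nilpotents, since the analogous Theorem~\ref{NSACF} sidesteps them by assuming $A$ is an integral domain. Here the auxiliary ideal $(X)$ (in the geometric direction (3)$\imp$(1)) and the auxiliary quotient $A/(a)$ together with the reflection of atomic formulas by immersions (in the positive direction (4)$\imp$(1)) are the two devices that do the work; everything else is a rerun of Cherlin's argument adapted to the positive setting.
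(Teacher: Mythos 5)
Your proposal is correct, but it follows a genuinely different route from the paper's. The paper keeps assertion (4) tied directly to the Nullstellensatz: it proves $(3)\Rightarrow(4)$ by observing that the inclusion $\ms I(\ms Z_A(I))\subseteq\sqrt I$ forces any finitely generated ideal without a rational point in $A$ to be the unit ideal (hence without a rational point in any non-trivial $A$-algebra), and $(4)\Rightarrow(3)$ by choosing a prime $\mf p\supseteq I$ avoiding a given $Q\notin\sqrt I$, inverting $Q$ in a fraction field of $A[X]/\mf p$, and pulling the witness back to $A$ by the immersion property; it then closes the loop with $(3)\Rightarrow(1)$ by first extracting the field property from the ideal $(a)$ in zero variables and invoking Theorem~\ref{NSACF} for algebraic closedness (working throughout with the one inclusion $\ms I(\ms Z_A(I))\subseteq\sqrt I$ and recovering the equality at the end). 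You instead prove $(3)\Rightarrow(1)$ self-containedly --- your treatment of nilpotents via the ideal $(X)$ and of invertibility via $(aX-1)$ and the unit criterion in $A[X]$ is sound and avoids the appeal to Theorem~\ref{NSACF} --- and you establish $(1)\Leftrightarrow(4)$ separately by embedding quotients of the target ring into algebraically closed fields and invoking classical model-completeness of ACF. Both decompositions are valid; what the paper's buys is a direct, elementary equivalence between the Nullstellensatz inclusion and positive existential closure that never mentions model-completeness and thus serves as the prototype for Theorem~\ref{PMCGC}, whereas yours is closer to the classical model-theoretic toolkit and makes the role of reducedness and of Krull's theorem more explicit. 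The only caution is that your $(3)\Rightarrow(1)$ uses both inclusions of the stated equality (the containment $\sqrt{(X)}\subseteq\ms I(\ms Z_A((X)))$ in the reducedness step), which is legitimate for the theorem as stated but departs from the paper's deliberate weakening to one inclusion.
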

\begin{proof}
We show the equivalence of the assertions, replacing equalities between ideals in $(1)$ and $(2)$ by direct inclusions
 ($\ms I(\ms Z_A(I))\subseteq\sqrt{I}$), and show in the course of it that this is enough to prove the theorem as it is stated. \\
$(1)\imp(2)$ This is again Hilbert's Nullstellensatz (Theorem~\ref{HILNUL}).\\
$(2)\imp(3)$ Obvious.\\
$(3)\imp(4)$ Let $I$ be a ideal of finite type over $A$, with no rational point in $A$ : this means that $\ms Z_A(I)=\emptyset$.
 It follows that $\ms I(\ms Z_A(I))=A[X]$. By hypothesis, we have $\sqrt\I\supseteq \ms I(\ms Z_A(I))$, so we get $1\in\sqrt I$,
from whence $1\in I$ and $I=A[X]$. Hence, $I$ cannot have a rational point in a non-trivial $A$-algebra. In other words,
 every non-trivial $A$-algebra is an immersion, which means that $A$ is positively existentially closed.\\
$(4)\imp(3)$ We distinguish two cases :
\begin{itemize}
 \item If $I=A[X]$, notice that $\sqrt I=A[X]$. As $A$ is not the trivial ring, $\ms Z_A(A[X])=\emptyset$, so $\ms I(\ms Z_A(A[X]))
=A[X]$. This means that $\ms I(\ms Z_A(I))=\sqrt I$.
 \item If $I\neq A[X]$, let $Q\notin \sqrt I$ : there is a prime ideal $\mf p\supset I$ such that $Q\notin\mf p$, so in the non-trivial
 algebra $A[X]/\mf p$, $I$ has a rational point out of the zero set of $Q$. In a fraction field of $A[X]/\mf p$, we find a solution
 $c$ of $I$ and an inverse $d$ for $Q(c)$, for which we can find an antecedent $c'd'$ in $A$, a solution of the zero set of $I$
 and $Q(X).Y-1$. The point $c'$ is in $\ms Z_A(I)$ but not in $\ms Z_A (Q)$, so $Q\notin \ms I(\ms Z_A(I))$, and $(\sqrt I)^c
\subseteq (\ms I(\ms Z_A(I)))^c$, hence $\ms I(\ms Z_A(I))\subseteq \sqrt I$.
\end{itemize}
(3)$\imp$(1) From the preceding characterisation of algebraically closed fields (Theorem~\ref{NSACF}), it suffices to show that $A$ is a
field. Let then $a\in A$ and let $I:=(a)$. The ``zero set'' $\ms Z_A(I)$ must be a subset of $A^0=\{\emptyset\}$ ($A^0$ is the zero-dimensional
 affine space, i.e. the set of $0$-tuples) : it is
 $A^0$ if $a=0$ and $\emptyset$ if $a\neq 0$. If then $a\neq 0$, we have $\sqrt I\supseteq \ms I(\ms Z_A(I))=A$,
 so $1\in I$ : $a$ is invertible and $A$ is a field.\\
The four assertions are therefore equivalent, if we replace the equalities in (2) and (3) by inclusions; in this case, $A$ is a field, so
 these inclusions are equalities.
\end{proof}

    \subsection{Another logical Nullstellensatz}
 In the preceding section, we established a property of ``geometric saturation'' in structures we called ``geometrically closed''.
  Here is the corresponding notion for a theory.

\begin{prop}
 A model of a theory $\t$ is geometrically closed in $\K=Mod(\t)$ if and only if it is geometrically closed in $\wk=Mod(\tw)$.
\end{prop}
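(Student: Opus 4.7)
The plan is to use the characterisation of the quasivariety $\wk$ from Lemma~\ref{REPQV} as the class of substructures of products of models of $\t$. The easy direction is immediate: any $A$-algebra into a model of $\t$ is a fortiori an $A$-algebra into a model of $\tw$, so geometric closedness in $\wk$ implies geometric closedness in $\K$.

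For the converse, assume $A$ (which is a model of $\t$, hence also of $\tw$) is geometrically closed in $\K$, and let $f \colon A \to B$ be an arbitrary homomorphism with $B \in \wk$. By Lemma~\ref{REPQV}, fix an embedding $g \colon B \into \prod_{i \in I} M_i$ with each $M_i$ in $\K$, and let $\pi_i$ denote the $i$-th projection. For every $i$, the composite $h_i := \pi_i \circ g \circ f \colon A \to M_i$ is an $A$-algebra whose codomain lies in $\K$, so by hypothesis each $h_i$ is geometrically closed. Given any quasi-algebraic sentence $\phi(\ov a)$ with parameters in $A$ such that $A \mo \phi(\ov a)$, this gives $M_i \mo \phi(h_i \ov a)$ for every $i$.

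It remains to transport this validity first up to the product and then back down into $B$. A quasi-algebraic formula $\forall \ov y\ \bigwedge \Phi \imp \psi$ is preserved under products by a direct coordinatewise check, so $\prod_{i \in I} M_i \mo \phi(g(f\ov a))$; and being universal, such a formula is also preserved under passage to substructures, so the embedding $g$ yields $B \mo \phi(f \ov a)$. Hence $f$ preserves quasi-algebraic sentences with parameters in $A$, showing that $A$ is geometrically closed in $\wk$. The argument has no serious obstacle: everything reduces to the two standard preservation facts about universal Horn formulas combined with the concrete description of $\wk$ supplied by Lemma~\ref{REPQV}.
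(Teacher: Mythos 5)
Your proof is correct and follows essentially the same route as the paper: both directions are handled identically, with the converse reducing via Lemma~\ref{REPQV} to projections onto models of $\t$ and then invoking preservation of quasi-algebraic sentences under products and substructures. Nothing to add.
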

\begin{proof}
 Let $A$ be a model of $\t$. Every model of $\t$ is a model of $\tw$, so geometric completeness in $\K$ is stronger
 than in $\wk$. Reciprocally, if $f:A\to B$ is a reduced $A$-algebra, then we may suppose by Lemma~\ref{REPQV} that
 $B$ is embedded in a product $\prod_{i\in I} B_i$ of models of $\t$. Consider the $i^{th}$ projection $p_i$ of the product.
 Every $p_i\circ f$ is a geometrically closed $A$-algebra by hypothesis, so their product (as an $\ms L(A)$-structure) is a model of
 $Th_W(A|A)$,
 a quasi-algebraic theory also true in the subalgebra $(B,f)$, as quasi-algebraic sentences are preserved under
 products and substructures. This means that $A$ is geometrically closed in $\wk$.
\end{proof}

\begin{defi}
 We will say that the theory $\t$ is \emph{geometrically complete} if every model of $\t$ has the property of the proposition.
\end{defi}

In general, positively existentially closed structures of an elementary class $\K=Mod(\t)$ have no reason to exhibit
a particular connection with the notion of geometrically closed objects. However, when they consist of an elementary
subclass, their homomorphisms have the strongest preservation property.
\begin{lem}\label{POSEL}
 If $\t$ is positively model complete, then every homomorphism in $\K$ is elementary.
\end{lem}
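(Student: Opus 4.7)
The plan is to show, by induction on the complexity of formulas, that modulo $\t$ every formula is equivalent to a coherent one; once this is established, the conclusion follows immediately from the definition of positive model-completeness.

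First I would recall the two ingredients already at our disposal. On the one hand, positive model-completeness of $\t$ means that every homomorphism $f:A\to B$ between models of $\t$ is an immersion, i.e.\ both preserves and reflects every coherent sentence with parameters in $A$. On the other hand, Lemma~\ref{PMCCOH} guarantees that for each coherent formula $\phi(\ov x)$ there is a coherent formula $\phi^c(\ov x)$ such that $\t\mo\forall\ov x\ [\neg\phi(\ov x)\iff\phi^c(\ov x)]$, in other words negations of coherent formulas are, modulo $\t$, again coherent.

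Next, I would prove by induction on the construction of $\ms L$-formulas that every $\phi(\ov x)$ is $\t$-equivalent to a coherent formula. The base case is clear since atomic formulas are coherent. Conjunctions, disjunctions, and existential quantifications of coherent formulas are coherent by definition. For negation, if $\phi$ is (by induction) $\t$-equivalent to a coherent formula $\psi$, then Lemma~\ref{PMCCOH} provides a coherent complement $\psi^c$ of $\psi$ modulo $\t$, whence $\neg\phi$ is $\t$-equivalent to $\psi^c$. Finally, $\forall\ov y\,\phi$ may be rewritten as $\neg\exists\ov y\,\neg\phi$, and applying the previous cases in succession (negate, existentially quantify, negate again) shows it is $\t$-equivalent to a coherent formula.

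To conclude, let $f:A\to B$ be a homomorphism in $\K$, let $\phi(\ov x)$ be any $\ms L$-formula, and let $\ov a$ be a tuple from $A$. Choose a coherent $\psi(\ov x)$ such that $\t\mo\forall\ov x\ [\phi(\ov x)\iff\psi(\ov x)]$. Since $A,B\mo\t$ we have $A\mo\phi(\ov a)\iff A\mo\psi(\ov a)$ and $B\mo\phi(f\ov a)\iff B\mo\psi(f\ov a)$, while $f$ being an immersion gives $A\mo\psi(\ov a)\iff B\mo\psi(f\ov a)$. Composing these equivalences yields $A\mo\phi(\ov a)\iff B\mo\phi(f\ov a)$, so $f$ is elementary.

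The only slightly delicate step is the negation case of the induction, because the coherent complement produced by Lemma~\ref{PMCCOH} depends on $\t$ and not merely on the syntax; but this is exactly where positive model-completeness is doing its work, and once Lemma~\ref{PMCCOH} is invoked there is no further obstacle.
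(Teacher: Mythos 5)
Your proof is correct and follows essentially the same route as the paper: an induction on formula complexity using Lemma~\ref{PMCCOH} to handle negations, reducing elementarity to the preservation (and reflection) of coherent sentences. The only cosmetic difference is that the paper phrases the induction as ``every formula has a coherent complement'' and then concludes using preservation alone (applied to both a sentence and its negation in the elementary diagram), whereas you directly establish coherent equivalents and invoke the immersion property; both are sound.
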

\begin{proof}
 By Proposition~\ref{PMCCOH}, every coherent formula is equivalent modulo $\t$ to the negation of another coherent formula. By an easy
 induction on the complexity of first order formulas, every formula has such a coherent complement, so it is itself by~\ref{PMCCOH} again
equivalent to a coherent formula.\\
If $f:M\to N$ is a homomorphism between models of $\t$, we then deduce that for every sentence $\phi(\ov m)$ of the elementary diagram
 of $M$, a coherent equivalent $\psi(\ov x)$ of $\phi(\ov x)$ modulo $\t$ is true in $M$ of $\ov m$, so is also true in $N$ of $f\ov m$;
 as $N$ is itself a model of $\t$, then $N\models \phi(\ov m)$ so $f$ is elementary.
\end{proof}

It should then be obvious that models of such a positively model-complete theory enjoy an abstract Nullstellensatz, which we will
state in a compact form and an explicit one.

\begin{thm}\label{POSNS}
 Every positively model-complete theory $\t$ is geometrically complete.\\
In other words, if $A$ is a model of a positively model-complete theory $\t$ and $\pi$ is a \emph{finite}
 $a$-type in finitely many variables over $A$, then $tp_a^A(\pi(A))=\trap\pi$.
\end{thm}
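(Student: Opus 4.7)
The plan is to chain together three results already at our disposal: the proposition just established (geometric closure in $\K$ is equivalent to geometric closure in $\wk$), the structural description of $\wk$ from Lemma~\ref{REPQV}, and the surprisingly strong Lemma~\ref{POSEL} which upgrades arbitrary homomorphisms to elementary maps under positive model-completeness. The conjunction of these will immediately force geometric completeness; Theorem~\ref{CHARGC} then translates this into the explicit Nullstellensatz statement about $tp_a^A(\pi(A))=\trap\pi$.

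More concretely, I would first reduce the claim. By the proposition preceding the statement, it suffices to show that every model $A$ of $\t$ is geometrically closed in $\wk$. So pick any $A$-algebra $f\colon A \to B$ with $B\in\wk$, and fix a quasi-algebraic sentence $\phi(\ov a)$ with parameters in $A$ satisfied by $A$. The goal is to prove $B \models \phi^f$.

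Next I would invoke Lemma~\ref{REPQV} to embed $B$ into a product $\prod_{i \in I} B_i$ where each $B_i$ is a model of $\t$. Writing $p_i$ for the $i$-th projection, quasi-algebraic formulas are preserved under products and substructures, so $B \models \phi^f$ will follow once we know $B_i \models \phi^{p_i \circ f}$ for every $i$. But each $p_i \circ f\colon A \to B_i$ is a homomorphism between models of the positively model-complete theory $\t$, so by Lemma~\ref{POSEL} it is elementary. Elementary maps preserve \emph{all} sentences, so in particular $B_i \models \phi^{p_i \circ f}$, as desired. This shows $A$ is geometrically closed in $\wk$, hence (by the proposition) in $\K$, so $\t$ is geometrically complete.

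To obtain the explicit form stated in the theorem, apply Theorem~\ref{CHARGC} to the model $A$ of $\tw$: since $A$ is geometrically closed in $\wk$, for every finite tuple $\ov x$ of variables and every finite $a$-type $\pi$ of $A[\ov x]$ one has $tp_a^A(\pi(A)) = \trap\pi$. There is no real obstacle here beyond assembling the pieces correctly; the heart of the matter is Lemma~\ref{POSEL}, which does all the work by collapsing the distinction between homomorphisms and elementary maps. The only minor point to be careful with is ensuring that the reduction through an arbitrary product of models of $\t$ is legitimate, which is why we emphasise that quasi-algebraic formulas are preserved under products and substructures, i.e. that the class of structures satisfying a given quasi-algebraic sentence with parameters in $A$ is closed under the operations generating $\wk$ from $\K$.
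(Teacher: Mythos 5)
Your proof is correct and follows essentially the same route as the paper: Lemma~\ref{POSEL} does all the work by making every homomorphism between models of $\t$ elementary, hence geometrically closed. The only difference is that you re-derive the hard direction of the preceding proposition (passing through a product of models of $\t$ via Lemma~\ref{REPQV}) to get closure in $\wk$ directly, whereas the paper simply establishes closure in $\K$ and lets that proposition do the transfer; both are fine.
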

\begin{proof}
 Let $M$ be an object of $\K$ and let $f:M\to N$ be a homomorphism. By the preceding lemma, $f$ is elementary, so $f$ preserves
 in particular all sentences of the quasi-algebraic diagram of $M$ : $f$ is geometrically closed, so that $M$ is geometrically closed
 in $\K$.
\end{proof}

\begin{ex}
 Applying this theorem to the theory $\t$ of real-closed fields in the language of rings, we recover the real Nullstellensatz
 (\cite{BCR} 4.1.4), so the complex and real theorems are unified in this framework.\\
 Changing the point of view, we may apply the theorem to the theory of real-closed ordered fields in $\langle +,-,\times,\geq,0,1\rangle$.
  We know that if $F$ is a real closed field and $S$ is a closed semi-algebraic subset of $F^n$, then $S$ is defined
   by a finite number of polynomial (large) inequations $P(\ov x)\geq 0$  (\cite{BCR}, 2.7.2; this includes polynomial equations defined
   by $P(\ov x)\geq 0 \wedge -P(\ov x)\geq 0$). By the previous theorem and Example~\ref{EX4}, the set of such inequations which are satisfied in every point of $S$ is
   given by the intersection of all prime cones containing the polynomial $P(\ov x)$ such that the inequation $P(\ov x)\geq 0$ is in $T$. 
\end{ex}

As $tp_a^A(\pi(A))$ is a formal analog to $\ms I(\ms Z_A(\pi))$, the second statement of Theorem~\ref{POSNS} has the original algebraic
flavour, and we will carry out more concrete descriptions in the applications. For the moment, we tackle a certain ``reciprocal''
of this result.

  \subsection{Strict theories}\label{ST}
 The traditional model-complete theories of fields (with additional structure in a ``natural language'') studied in model theory
  satisfy the hypothesis of positive model-completness, like for instance real-closed ordered fields (\cite{MA}, 3.3.15), differentially
  closed fields (\cite{MA}, 4.3), or generic difference fields (\cite{CK}, 1.1). We'll look at the last two in the applications.
Positive model-completeness seems in the light of Lemma~\ref{POSEL} to be a much stronger property than geometric completeness. However, the
definition of positively existentially closed structures deals with rational points of affine algebraic varieties : a homomorphism
$f:A\to B$ is an immersion if and only if every affine variety defined in $A$ has a rational point in $A$ if it has one in $B$ through
$f$. This is a mean of connecting the two notions in the other way. In the elementary case, they are very close and even equivalent if
we make a mild additional assumption, suggested by Theorem~\ref{CHACF} and verified in every theory of field. This rests on
the observation that the positively existentially closed non-trivial rings \emph{are} the geometrically closed ones.
 
\begin{defi}\label{strict}
 The theory $\t$ will be called \emph{strict} if the one-element structure $\mb 1$ embeds in no model of $\t$; stated otherwise, if
 $\mb 1\not\models\t_\forall$ or equivalently if $\mb 1\not\mo\t_u$.
\end{defi}

The systematic connection between geometric completeness and positive model-completeness rests on the
following lemma, which is not obvious at first sight; it somehow creates a bridge in strict theories between strict universal Horn
 and h-universal formulas, which are the two kinds of (basic) universal Horn formulas.
\begin{lem}\label{GCIM}
 If $f:A\to B$ is a geometrically closed homomorphism between $\ms L$-structures and if $\mb 1\not\into B$, then $f$ is an immersion.
 In particular, if $\t$ is strict, every geometrically closed homomorphism between models of $\t_u$ is an immersion.
\end{lem}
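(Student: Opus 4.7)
The plan is to prove the first statement by showing that $f$ preserves every $h$-universal sentence defined in $A$, which by the syntactic characterisation of immersions recalled just before this lemma is exactly what is needed. Given an $h$-universal sentence $\phi(\ov a)=\forall\ov y\ \neg\bigwedge\Phi(\ov a,\ov y)$ true in $A$, I first enlarge $\ov y$ with dummy variables so that $|\ov y|\geq 2$; this changes nothing in the meaning of $\phi$.

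The key device is the following. Since $\bigwedge\Phi(\ov a,\ov y)$ has no solution in $A$, for every atomic formula $\psi(\ov a,\ov y)$ the sentence
\[\chi_\psi \;:=\; \forall\ov y\ [\bigwedge\Phi(\ov a,\ov y)\imp\psi(\ov a,\ov y)]\]
is quasi-algebraic and vacuously true in $A$. By the geometric closure of $f$, each $\chi_\psi$ transfers to $B$ with parameters $f\ov a$. Suppose now, for contradiction, that some $\ov b\in B^{|\ov y|}$ satisfies $\bigwedge\Phi(f\ov a,\ov b)$. Choosing $\psi$ to be $y_i=y_j$ forces all coordinates of $\ov b$ to coincide in a single element $b\in B$; choosing $\psi$ to be $y_1=c$ for each constant symbol $c$ of $\ms L$ forces $c^B=b$; choosing $\psi$ to be $R(y_1,\ldots,y_1)$ for each relational symbol $R$ yields $R^B(b,\ldots,b)$; and choosing $\psi$ to be $F(y_1,\ldots,y_1)=y_1$ for each functional symbol $F$ yields $F^B(b,\ldots,b)=b$. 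Thus $\{b\}$ carries a substructure of $B$ isomorphic to $\mb 1$, contradicting $\mb 1\not\into B$. Hence no such $\ov b$ exists, i.e.\ $B\mo\phi(f\ov a)$.

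For the particular case, it suffices to note that $h$-universal formulas are preserved under substructures; so if $\mb 1$ embedded into any model of $\t_u$, then $\mb 1$ itself would be a model of $\t_u$, contradicting the definition of strictness. Hence every model $B$ of $\t_u$ satisfies $\mb 1\not\into B$, and the first half applies.

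The main obstacle is conceptual rather than computational: one must recognise that the purely conditional transfer afforded by geometric closure, when combined with vacuity in $A$, can be made to collapse any hypothetical witness in $B$ onto an embedded copy of $\mb 1$. Once this is noticed, the argument reduces to writing down the appropriate finite family of atomic formulas $\psi$ in the quasi-algebraic sentences $\chi_\psi$.
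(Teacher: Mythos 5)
Your proof is correct and follows essentially the same route as the paper's: both arguments exploit the fact that the quasi-algebraic sentences $\forall\ov y\ [\bigwedge\Phi(\ov a,\ov y)\imp\psi(\ov a,\ov y)]$ are vacuously true in $A$, transfer to $B$ by geometric closure, and force any witness $\ov b$ in $B$ to collapse onto a single element generating a copy of $\mb 1$. You merely spell out more explicitly which atomic formulas $\psi$ to use and why strictness of $\t$ rules out $\mb 1\into B$ for models of $\t_u$, both of which the paper leaves implicit.
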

\begin{proof}
 Suppose for the sake of contradiction that $f$ is not an immersion. This means that there exists a positive primitive sentence
 $\phi(\ov a)=\exists\ov y \bigwedge_{i=1}^n \psi_i(\ov a,\ov y)$ with parameters in $A$, such that $A\not\models\phi(\ov a)$ but $(B,f)\models \phi(\ov a)$. There is then
 no rational point for $\bigwedge_{i=1}^n \psi_i(\ov a,\ov y)$ in $A$, so for every atomic formula $\chi(\ov a,\ov a',\ov y)$
  with parameters in $A$, the quasi-algebraic sentence $\theta(\ov a):=\forall\ov y\ [\bigwedge_{i=1}^n \psi_i(\ov a,\ov y)\imp
  \chi(\ov a,\ov a',\ov y)]$ is true in $A$.
 Let $\ov b$ be a rational point of $\bigwedge_{i=1}^n \psi_i(f\ov a,\ov y)$ in $B$. As $f$ is geometrically closed, we have
 $(B,f)\models \theta(f\ov a)$, so $\ov b$ satifies every atomic formula $\chi(f\ov a,f\ov a',\ov y)$ in $B$. In particular, this means that all
 the coordinates of $\ov b$ are equal to a single element $b$, and that this element generates in $B$ the one-element structure, which
 is impossible by hypothesis. We conclude by contradiction that the hypothesis is false : $f$ is an immersion. 
\end{proof}

\begin{rem}
 This also means that if $\t$ is a strict theory, $\K=Mod(\t)$ and $A$ is a geometrically closed non-trivial structure in $\wk$,
 then $A$ is positively existentially closed as a model of $\t_u$.
\end{rem}
 Indeed, as $A$ is not $\mb 1$, then $A$ embeds into a non-trivial product of models of $\t$, which means that $A\mo\t_u$. For those
 ``non-trivial geometrically closed models of $\tw$'', we more clearly enter into the realm of positive model theory of \cite{BY} and \cite{BYP}. 

It is not clear to us why the reciprocal of the lemma should be true in general, i.e. that any non-trivial model of $\tw$
 positively existentially closed as a model of $\t_u$ would be geometrically closed in a strict theory, but we have no counter-example.
 In the elementary case, however, we have the reciprocal of Theorem~\ref{POSNS}.

\begin{thm}\label{PMCGC}
 If $\t$ is a strict theory, then it is positively model-complete if and only if it is geometrically complete.
\end{thm}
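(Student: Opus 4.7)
The plan is to split into the two implications, with the forward direction being essentially a restatement of an earlier result and the reverse direction being a clean application of Lemma~\ref{GCIM}, where strictness enters precisely to rule out the trivial structure as a substructure of models of $\t$.

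For the forward implication, if $\t$ is positively model-complete then Theorem~\ref{POSNS} already gives that $\t$ is geometrically complete, independently of any strictness hypothesis. So this direction is immediate.

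For the reverse implication, assume $\t$ is strict and geometrically complete. To show positive model-completeness, fix $A \models \t$ and an arbitrary homomorphism $f \colon A \to B$ with $B \models \t$; I would show $f$ is an immersion. Since $A$ is geometrically closed in $\K = Mod(\t)$ by hypothesis, $f$ is a geometrically closed homomorphism. Strictness of $\t$ means $\mb 1 \not\mo \t_u$, and since $B \models \t$ in particular satisfies $\t_u$, the one-element structure $\mb 1$ does not embed into $B$. Lemma~\ref{GCIM} then applies directly to conclude that $f$ is an immersion, so $A$ is positively existentially closed in $\K$. Since $A$ was an arbitrary model of $\t$, the theory $\t$ is positively model-complete.

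There is no real obstacle, as both ingredients (Theorem~\ref{POSNS} and Lemma~\ref{GCIM}) have been set up precisely for this purpose; the only point to be careful about is verifying that the hypothesis $\mb 1 \not\into B$ of Lemma~\ref{GCIM} is indeed available, which follows from strictness together with $B \models \t \supseteq \t_u$. This is exactly the role strictness is designed to play, bridging the gap between quasi-algebraic preservation (geometric closure) and $h$-universal preservation (immersion).
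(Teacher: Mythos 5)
Your proposal is correct and follows the paper's own proof exactly: the forward direction is Theorem~\ref{POSNS}, and the reverse direction applies Lemma~\ref{GCIM} to every homomorphism in $\K$, with strictness guaranteeing that $\mb 1$ does not embed into any model of $\t$. The only difference is that you spell out the verification of the hypothesis $\mb 1\not\into B$ (via $B\mo\t_u$ and preservation of $h$-universal sentences under substructures), which the paper leaves implicit in the ``in particular'' clause of the lemma.
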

\begin{proof}
 The direct part of the equivalence is Theorem~\ref{POSNS}, while for the other part, if every objet of $\K=Mod(\t)$ is geometrically
 closed in $\K$, then every homomorphism in $\K$ is an immersion by Lemma~\ref{GCIM}, i.e. every objet of $\K$ is positively existentially
 closed. As $\K$ is elementary, this exactly says that $\t$ is positively model-complete.
\end{proof}

   \subsection{Atomic Morleyisation and existential completeness}\label{AMC}
We remind that ``Cherlin's Nullstellensatz'' builds on model-completeness, rather than on positive model-completeness
 (section~\ref{HCNS}). A certain way of turning theories into positive model-complete ones is the operation
  called ``Positive Morleyisation'' by Ben Yaacov and Poizat (\cite{BY}, \cite{BYP}). Here we introduce a very tame version of it,
   adding relation symbols for the negations of atomic formulas only (we don't ``Morleyise'' all the fragment generated by them), which
   is somehow handled in \cite{BS} : this will allow us to connect our work with an extension of Cherlin's.\\
If $\ms L$ is a first order language, we associate to $\ms L$ another language $\ms L^*$, consisting of $\ms L$ and the addition,
 for every relation symbol $R$ of $\ms L$, of a new relation symbol $R^*$ of the same arity (or sorting), which will axiomatise
  the complement of $R$ in $\ms L^*$-structures. 
  \begin{defi}
   If $\t$ is a theory in $\ms L$, then say that the \emph{atomic Morleyisation of $\t$} is the theory $\t^*$ in $\ms L^*$,
    which consists of $\t$ and all the axioms of the form $\forall\ov x\ [\neg R(\ov x)\iff R^*(\ov x)]$,
 where $R$ is a relation symbol of $\ms L$ and $\ov x$ an appropriate tuple of variables.
  \end{defi}
If $A$ is an $\ms L$-structure, then we will denote by $A^*$ the $\ms L^*$-structure which is the expansion of $A$ by the
 symbols $R^*$, interpreted in $A$ as the complements of $R$. As any $\ms L$-structure is a model of the empty theory $\emptyset$,
 this process produces for every $A$ a model $A^*$ of $\emptyset^*$. In fact, it turns every model $A$ of a theory $\t$
 into a model of $\t^*$. Note that the additionnal axioms could be replaced by the couples of basic universal ones
 $\forall\ov x\ [R(\ov x)\wedge R^*(\ov x)\imp \bot]$ and $\forall\ov x\ [\top\imp R(\ov x)\vee R^*(\ov x)]$. One way of rephrasing
 (classical) model-completeness is the following.
 \begin{rem}
  A theory $\t$ in $\ms L$ is model-complete if and only if $\t^*$ is positively model-complete.
 \end{rem}
Let now $\ms L=\langle +,-,\times,0,1 \rangle$ be the language of ``rings'', $\t$ a theory of rings in this language,
 $A$ a model of $\tu$ and $I$ an ideal of a polynomial ring $A[\ov X]=A[X_1,\ldots,X_n]$, as in section~\ref{HCNS}. An ideal $\mf p$
 partakes in the $\t$-radical $\sqrt[\t]{I}$ of $I$ if it contains $I$ and if $A[\ov X]/\mf p$ is an \emph{extension} of $A$ which
   embeds into a model of $\t$. Now through atomic Morleyisation, a map $f:A\to B$ of $\ms L$-structures is an embedding
   \emph{if and only if} it is a homomorphism of $\ms L^*$-structures from $A^*$
 into $B^*$. In other words, the ideals $\mf p$ of the kind evoked ar exactly the algebraic kernels of the ring homomorphisms
     induced by $\ms L^*$-homomorphisms $f:A^*[\ov X]\to B^*$, where $B$ is a model of $\t$ and $A^*[\ov X]$ is the expansion of $A[\ov X]$
      by the $\ms L^*$-structure \emph{on $A$}. This means that they correspond
     to $\t^*$-prime $a$-types of $A[\ov X]$, containing the set $\pi$ of equations induced by $I$ : the positive $\t^*$-radical
      $\sqrt[\t^*]\pi^+$ corresponds to $\trad I$, so we connect here with \ref{CH}. Let us do it systematically for any
 language $\ms L$.

\begin{defi}
 If $\t$ is a first order theory, $A$ is an $\ms L$-structure, $\ov x$ is a finite tuple of variables and $\pi$ is an $a$-type of
 $A[\ov x]$, say that
\begin{itemize}
 \item $\pi$ is \emph{strongly ($\t$-)prime}, if it is prime and if the natural composite morphism $A\into A[\ov x]\onto A[\ov x]/\pi$
 is an embedding
 \item the \emph{strong radical} of $\pi$, noted $\trad\pi$, is the intersection of all strong primes containing $\pi$
 \item $\pi$ is \emph{(strongly $\t$-)radical}, if $\pi=\trad\pi$.
\end{itemize}
\end{defi}

The terminology ``strong'' comes from the fact that every strongly prime $a$-type is prime and so $\pi\subset\trap\pi\subset\trad\pi$
 for every $a$-type $\pi$, thus every strongly radical $a$-type is (positively) radical. The next ``affine'' Nullstellensatz is
 a ge\-ne\-ra\-li\-sa\-tion of Theorem~\ref{CH}; note that strong primality and radicality are defined only for $a$-types in term algebras. In the
  following analogue of Theorem~\ref{POSNS}, we don't have to assume the axiomatisability of the class of existentially closed models.
\begin{thm}\label{EXC}
 If $\t$ is an $\ms L$-theory and $A$ is an existentially closed model of $\t$, then for every finite $a$-type $\pi$ of $A[\ov x]$, we have
 $tp_a^A(\pi(A))=\trad\pi$.
\end{thm}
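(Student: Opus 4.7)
The plan is to prove the equality $tp_a^A(\pi(A))=\trad\pi$ by establishing the two inclusions separately. The easy direction is a formal consequence of the definition of strongly prime $a$-types together with the isomorphism theorem, while the reverse direction is the real content: it will require packing $\mf p$ into an existential sentence that can be pulled back through an extension of $A$ by appealing to the existential closedness of $A$, and this is where the finiteness hypothesis on $\pi$ becomes crucial.

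For the inclusion $\trad\pi\subseteq tp_a^A(\pi(A))$, I would fix $\ov b\in \pi(A)$ and consider the evaluation morphism $e_{\ov b}:A[\ov x]\onto A$ that fixes $A$ and sends $\ov x$ to $\ov b$. Its $a$-type is exactly $tp_a^A(\ov b)$, and by Theorem~\ref{iso} the quotient $A[\ov x]/tp_a^A(\ov b)$ is isomorphic to $A$ as an $A$-algebra; under this identification the natural map $A\to A[\ov x]/tp_a^A(\ov b)$ is the identity on $A$, hence an embedding. Since $A\mo\t$ implies $A\mo\tu$, the $a$-type $tp_a^A(\ov b)$ is strongly prime, and it contains $\pi$ because $\ov b\in \pi(A)$; therefore $\trad\pi\subseteq tp_a^A(\ov b)$. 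Intersecting over $\ov b\in \pi(A)$ yields the claimed inclusion.

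For the reverse inclusion I would argue by contraposition. Let $\phi(\ov x)$ be an atomic $\ms L(A)$-formula with $\phi\notin \trad\pi$: there exists a strongly prime $a$-type $\mf p\supseteq\pi$ with $\phi\notin \mf p$. By strong primality, the natural map $A\into A[\ov x]/\mf p$ is an embedding, and since $A[\ov x]/\mf p\mo\tu$ it further embeds into some model $B\mo\t$; composing gives an embedding $f:A\into B$ in $\K$, through which we identify $A$ with its image. The image $\ov b$ of the residue class of $\ov x$ in $B$ then satisfies $\bigwedge\pi(\ov b)\wedge\neg\phi(\ov b)$ in $B$, because the $a$-type of the composite $A[\ov x]\to B$ is exactly $\mf p$ (the quotient map has $a$-type $\tilde{\mf p}=\mf p$ since $\mf p$ is closed as a prime $a$-type, and the embedding into $B$ reflects atomic formulas). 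Here is the crucial step: since $\pi$ is finite, $\exists\ov y\,[\bigwedge\pi(\ov y)\wedge\neg\phi(\ov y)]$ is a bona fide existential $\ms L(A)$-sentence; it holds in $B$, and since $A$ is existentially closed in $\K$, it already holds in $A$. A witness $\ov b'\in A$ then lies in $\pi(A)$ and falsifies $\phi$, so $\phi\notin tp_a^A(\pi(A))$. The main obstacle is thus identifying the right existential sentence to transfer through $f$, and the finiteness of $\pi$ is exactly what makes this transfer legal, since one would otherwise face an infinitary conjunction outside the scope of the existential completeness property. Observe that, in contrast with the positive counterpart~\ref{POSNS}, no axiomatisability assumption on the subclass of existentially closed models is required, because existential closedness is invoked only for one specific existential $\ms L(A)$-sentence at a time.
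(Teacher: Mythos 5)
Your proof is correct and follows essentially the same route as the paper: the easy inclusion via evaluation morphisms and the isomorphism theorem, and the reverse inclusion by contraposition, transferring the finite existential sentence $\exists\ov y\,[\bigwedge\pi(\ov y)\wedge\neg\phi(\ov y)]$ back to $A$ by existential closedness. The only cosmetic difference is that you embed $A[\ov x]/\mf p$ further into a model of $\t$ before invoking existential closedness, whereas the paper applies it directly to the extension $A\into A[\ov x]/\mf p$ using the standard fact that an existentially closed model of $\t$ is also existentially closed among models of $\tu$; both variants are fine.
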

\begin{proof}
 Let $V=\pi(A)$. For every point $\ov a$ of $V$, we have the evaluation morphism of $A$-algebras $e_{\ov a}:A[\ov x]\onto A$, the $a$-type
  of which is $tp_a^A(\ov a)$ and $A[\ov x]/tp_a^A(\ov a)\simeq A$ as $A$-algebras, so $tp_a^A(\ov a)$ is strongly prime because
 $A\mo\tu$. This means that $tp_a^A(V)\supset\trad\pi$.\\
Reciprocally, if $\phi(\ov x,\ov a)\notin\trad\pi$, then there exists a a strongly prime $a$-type $\mf p$ of $A[\ov x]$, containing
 $\pi$ and such that $\phi\notin\mf p$. In other words, we have $A[\ov x]/\mf p\mo\bigwedge\pi(\ov x)\wedge\neg\phi(\ov x)$, where
 $\ov x$ now denotes in the formula its canonical interpretation in $A[\ov x]$. By hypothesis on $\mf p$, the algebra $f:A\to A[\ov x]/\mf p$ is an
 embedding of $A$ into a model of $\tu$. As $A$ is existentially closed for models of $\t$, it is as well for models of $\tu$,
 so $f$ is existentially closed and $A\mo\exists\ov x\ \bigwedge\pi(\ov x)\wedge\neg\phi(\ov x)$ : this means that
 $\phi\notin\bigcap_{\ov a\in V} tp_a^A(\ov a)=tp_a^A(V)$, so $tp_a^A(V)\subset\trad\pi$.
\end{proof}
The notions of this theorem may be conceptualised in the preceding framework.
\begin{defi}
 $A$ is \emph{weakly geometrically closed} in a class $\K$ if every \emph{extension} $f:A\into B$ in $\K$ is geometrically
 closed.
\end{defi}

\begin{lem}
 If $\K=Mod(\t)$ and $A\in \uk$, then $A$ is weakly geometrically closed in $\uk$ if and only if for every affine variety $V=\pi(A)$ over $A$,
 we have $tp_a^A(V)=\trad\pi$.
\end{lem}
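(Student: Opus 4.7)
The plan is to imitate the proof of Theorem~\ref{CHARGC}, but with strongly prime $a$-types in place of prime ones, so that the ambient class $\uk$ and embeddings (rather than $\wk$ and arbitrary homomorphisms) play the structural role.

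First I would record the always-true inclusion $tp_a^A(V)\supset\trad\pi$, which holds for any $A\in\uk$. For each $\ov a\in V$ the evaluation morphism $e_{\ov a}:A[\ov x]\onto A$ has $a$-type $tp_a^A(\ov a)$, and by Theorem~\ref{iso} the quotient $A[\ov x]/tp_a^A(\ov a)\simeq A$ is in $\uk$ with the composite $A\into A[\ov x]\onto A$ being the identity, so $tp_a^A(\ov a)$ is strongly prime. Therefore $\trad\pi\subset\bigcap_{\ov a\in V}tp_a^A(\ov a)=tp_a^A(V)$, exactly as in the first half of Theorem~\ref{EXC}.

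For the forward direction, assume $A$ is weakly geometrically closed in $\uk$ and let $\phi(\ov a,\ov x)\in tp_a^A(V)$. This means $A\mo\chi$ where $\chi:=\forall\ov x\,[\bigwedge\pi(\ov x)\imp\phi(\ov x)]$, a quasi-algebraic sentence with parameters in $A$. Pick any strongly prime $\mf p\supset\pi$ and consider the natural map $\iota:A\into A[\ov x]/\mf p$, which by definition of strongly prime is an embedding into an object of $\uk$. By hypothesis $\iota$ is geometrically closed, so $A[\ov x]/\mf p\mo\chi^\iota$; the interpretation of $\ov x$ in $A[\ov x]/\mf p$ satisfies $\pi$, hence it satisfies $\phi$, which is to say $\phi\in\mf p$. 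As $\mf p$ was arbitrary, $\phi\in\trad\pi$.

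For the reverse direction, assume the Nullstellensatz $tp_a^A(V)=\trad\pi$ holds for every finite $\pi$ and let $f:A\into B$ be an embedding in $\uk$. Let $\chi=\forall\ov x\,[\bigwedge\pi(\ov a,\ov x)\imp\phi(\ov a,\ov x)]$ be a quasi-algebraic sentence true in $A$; then $\phi\in tp_a^A(\pi(A))=\trad\pi$ by hypothesis. Given a tuple $\ov b$ in $B$ with $B\mo\bigwedge\pi(f\ov a,\ov b)$, consider the unique $A$-algebra homomorphism $g:A[\ov x]\to B$ sending $\ov x$ to $\ov b$; its $a$-type $\mf p$ contains $\pi$, and by Theorem~\ref{iso}, $A[\ov x]/\mf p\simeq g(A[\ov x])$, a substructure of $B$, hence in $\uk$. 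Moreover the induced map $A\to A[\ov x]/\mf p$ coincides with $f$ corestricted to $g(A[\ov x])$, so it is an embedding; thus $\mf p$ is strongly prime. Hence $\phi\in\trad\pi\subset\mf p$, so $B\mo\phi(f\ov a,\ov b)$, and $f$ is geometrically closed.

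The only delicate point is verifying that $\mf p:=tp_a(g)$ is genuinely strongly prime rather than merely prime; this is where the hypothesis that $f$ is an \emph{embedding} (and not just a homomorphism) is essential, and it is the one spot where the argument diverges from the proof of Theorem~\ref{CHARGC}. Everything else is parallel bookkeeping, and the quasi-algebraic sentence $\chi$ encodes membership in $tp_a^A(V)$ in exactly the same way.
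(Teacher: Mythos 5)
Your proof is correct and follows essentially the same route as the paper's: the standing inclusion $tp_a^A(V)\supset\trad\pi$ via the strongly prime evaluation types, the forward direction by applying weak geometric completeness to the extensions $A\into A[\ov x]/\mf p$ for strongly prime $\mf p\supset\pi$, and the converse by observing that the $a$-type of the evaluation morphism $A[\ov x]\to B$ at a point of $\pi(B)$ is strongly prime. The extra verification you flag (that this $a$-type is strongly prime because $f$ is an embedding) is left implicit in the paper but is exactly the right point to make explicit.
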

\begin{proof}
 Suppose that $A$ is weakly geometrically closed. As in the first part of the theorem, we have $tp_a^A(V)\supset\trad\pi$. If $\phi\in tp_a^A(V)$, then
 $A\mo\forall\ov x\ [\bigwedge\pi(\ov x)\imp \phi(\ov x)]$, this last sentence $\chi$ being in $Th_W(A|A)$, as $\pi$ is finite.
 By weak geometric completeness, for every strongly prime $a$-type $\mf p\supset\pi$, the extension $f:A\into A[\ov x]/\mf p=B$
 is geometrically closed, so $(B,f)\mo\chi$, which means that $\phi\in\mf p$, and $tp_a^A(V)\subset\trad\pi$.\\
Reciprocally, suppose that $tp_a^A(V)=\trad\pi$ for every affine $V$ and that $f:A\into B$ is an extension in $\uk$. We want to show
 that $(B,f)\mo Th_W(A|A)$. Let $\chi=\forall\ov x\ [\bigwedge\pi(\ov x)\imp \phi(\ov x)]$ be one of its elements and $V=\pi(A)$; by definition
  of $Th_W(A|A)$ and $tp_a^A(V)$, we have $\phi\in tp_a^A(V)$. If $\ov b$ is in $\pi(B)$, the $a$-type of the evaluation morphism of
 $A$-algebras $f_{\ov b}:A[\ov x]\to B$ is a strongly prime $a$-type containing $\pi$. By hypothesis $\phi\in\trad\pi$, which
 means that $(B,f)\mo\phi(\ov b)$, thus $(B,f)\mo\chi$ : $f$ is geometrically closed.
\end{proof}

Another way of stating Theorem~\ref{EXC} is thus to say that an existentially closed model of any theory $\t$ is weakly geometrically
 closed for $\tu$; Proposition~\ref{EXM} will state something stronger. Notice first that if $A\subset B$ is an extension of $\ms L$-structures, it induces an extension $A^*\subset B^*$ of the corresponding
 $\ms L^*$-expansions.
\begin{lem}\label{USTAR}
 The theories $(\t^*)_\forall$ and $(\tu)^*$ are equivalent. We will note this theory $\tu^*$ and $\uk^*$ the class of
 its models.
\end{lem}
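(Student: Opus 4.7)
The plan is to establish the equivalence by showing the two theories have the same class of models. The key preliminary observation is that each Morleyisation axiom $\forall\ov x\ [\neg R(\ov x)\iff R^*(\ov x)]$ is equivalent to the conjunction of the two basic universal $\ms L^*$-sentences $\forall\ov x\ [R(\ov x)\vee R^*(\ov x)]$ and $\forall\ov x\ [\neg R(\ov x)\vee \neg R^*(\ov x)]$; in particular these axioms are preserved in substructures of $\ms L^*$-structures.

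For the inclusion $(\t^*)_\forall\subset (\tu)^*$, let $M\mo(\t^*)_\forall$. Then $M$ embeds as an $\ms L^*$-substructure into some $N\mo\t^*$. By the preliminary observation, $M$ also satisfies the Morleyisation axioms, so $R^{*M}$ is the complement of $R^M$ for each $R$. The $\ms L$-reduct $M_0$ is then a substructure of the $\ms L$-reduct $N_0$, which models $\t$; hence $M_0\mo\tu$ and therefore $M\mo(\tu)^*$.

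Conversely, let $M\mo (\tu)^*$. Its $\ms L$-reduct $M_0$ is a model of $\tu$, so by the syntactic characterisation of models of $\tu$ (cf.~the theorem after the definition of $\tu$), there is an $\ms L$-embedding $i:M_0\into N_0$ into some $N_0\mo\t$. Expand $N_0$ to $N_0^*$ by interpreting each $R^*$ as the complement of $R^{N_0}$; then $N_0^*\mo\t^*$. It remains to verify that $i$ lifts to an $\ms L^*$-embedding $M\into N_0^*$. For atomic $\ms L$-formulas this is immediate; for the new symbols,
\[
M\mo R^*(\ov a) \iff M_0\not\mo R(\ov a) \iff N_0\not\mo R(i\ov a) \iff N_0^*\mo R^*(i\ov a),
\]
using successively the Morleyisation axioms in $(\tu)^*$, the fact that $i$ is an $\ms L$-embedding, and the definition of $N_0^*$. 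So $M$ embeds as an $\ms L^*$-structure into a model of $\t^*$, giving $M\mo (\t^*)_\forall$.

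The argument involves no real obstacle; the only subtle point is the bookkeeping in checking that the $\ms L$-embedding $i$ is automatically an $\ms L^*$-embedding once one knows that both $M$ and $N_0^*$ interpret every $R^*$ as the complement of $R$, and this is exactly what the universal form of the Morleyisation axioms delivers on both sides.
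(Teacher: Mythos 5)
Your proof is correct and follows essentially the same route as the paper's: both directions use the characterisation of universal classes via embeddings, together with the observation that the Morleyisation axioms are equivalent to basic universal $\ms L^*$-sentences and hence pass to substructures, and the converse direction expands the target model of $\t$ canonically and checks the embedding on the starred symbols. No issues.
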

\begin{proof}
 As for any theory in $\ms L$, $(\tu)^*=\tu\cup\emptyset^*$. If $A\mo(\t^*)_\forall$, there is an embedding $f:A\into B$
 of $\ms L^*$-structures, such that $B\mo\t^*$. This means that $A\mo\tu$, and as $A\mo\emptyset^*$ (which is universal
  in $\ms L^*$), we have $A\mo(\tu)^*$.\\
Reciprocally, if $A\mo(\tu)^*$, then $A\mo\tu$, so there is an embedding of $\ms L$-structures $f:A_0\into B$,
 where $A_0$ is the $\ms L$-reduct of $A$ and $B$ is a model of $\t$. This means that $B^*\mo\t^*$, and as $(A_0)^*=A$, $f$ induces an
 embedding of $\ms L^*$-structures $A\into B^*$ and we get $A\mo(\t^*)_\forall$.
\end{proof}
Remark that in general, if $A$ is an $\ms L^*$-structure, there is no reason why we should have $(A_0)^*=A$. This is the case if and only
 if $A\mo\emptyset^*$. Call such an $\ms L^*$-structure \emph{regular}.

 \begin{prop}\label{EXM}
  If $A$ is an existentially closed model of a theory $\t$, then $A^*$ is geometrically closed in $\uk^*=Mod(\tu^*)$.
 \end{prop}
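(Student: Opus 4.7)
The plan is to reduce $\ms L^*$-geometric closure to the classical fact that existentially closed embeddings preserve universal sentences, by systematically exploiting the regularity of models of $\emptyset^*$.

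First, I would unpack what needs to be shown. By definition, $A^*$ is geometrically closed in $\uk^* = Mod(\tu^*)$ iff every $\ms L^*$-homomorphism $f: A^* \to B$ with $B \in \uk^*$ satisfies $(B,f) \models Th_W(A^*|A^*)$. Since $\uk^* \subseteq Mod(\emptyset^*)$ by Lemma~\ref{USTAR}, every such $B$ is regular, i.e. $B = (B_0)^*$ where $B_0 \in \uk$. Moreover, an $\ms L^*$-homomorphism between regular $\ms L^*$-structures is automatically an $\ms L$-\emph{embedding}: preservation of the atomic $R^*(\ov a)$ is exactly reflection of $R(\ov a)$. So $f$ corresponds to an $\ms L$-embedding $f_0 : A \hookrightarrow B_0$ of objects of $\uk$.

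The key intermediate step is: \emph{$f_0$ is an existentially closed embedding}. Since $B_0 \in \uk$, there is an $\ms L$-embedding $g: B_0 \hookrightarrow M$ with $M \models \t$. The composite $g \circ f_0 : A \hookrightarrow M$ is existentially closed by the hypothesis on $A$. If $A \models \neg \exists \ov y\, \phi(\ov a, \ov y)$ but $B_0 \models \exists \ov y\, \phi(f_0 \ov a, \ov y)$ for some quantifier-free $\phi$, then $g$ (being an embedding) would lift the witnesses to $M$, contradicting existential closure of $g \circ f_0$. Hence $f_0$ reflects existential formulas, which is equivalent to saying $f_0$ preserves universal formulas.

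Next, I would translate the problem from $\ms L^*$ back to $\ms L$. In any regular $\ms L^*$-structure, an $\ms L^*$-atomic formula $R^*(\ov t)$ is equivalent to the $\ms L$-literal $\neg R(\ov t)$. Hence any quasi-algebraic $\ms L^*$-sentence
\[
\forall \ov y\, [\bigwedge \Phi(\ov a, \ov y) \imp \psi(\ov a, \ov y)]
\]
is equivalent, in regular $\ms L^*$-structures, to an $\ms L$-basic universal sentence (by moving the $R^*$-conjuncts to the right-hand side as $R$-disjuncts, and dually for $\psi$ in case it is an $R^*$-formula). Call $\phi^\flat$ the resulting $\ms L$-universal translation.

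Finally, assemble the pieces. Suppose $A^* \models \phi(\ov a)$ with $\phi$ quasi-algebraic in $\ms L^*$; then $A \models \phi^\flat(\ov a)$. Since $f_0 : A \hookrightarrow B_0$ is an existentially closed embedding, $f_0$ preserves $\ms L$-universal formulas, so $B_0 \models \phi^\flat(f_0 \ov a)$. Back-translating, $B = (B_0)^* \models \phi(f \ov a)$, so $(B,f) \models \phi(\ov a)$. This is exactly geometric closure of $f$, completing the proof.

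The main obstacle, if any, is the bookkeeping in the translation step: one must verify that quasi-algebraic $\ms L^*$-formulas with the new relation $R^*$ in either the hypothesis or the conclusion both fall into the $\ms L$-universal fragment over regular structures. Everything else is a direct application of standard facts about existentially closed embeddings and the universal class $\uk$.
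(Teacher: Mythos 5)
Your proof is correct and takes essentially the same route as the paper's: use regularity to see that $f$ is an $\ms L$-embedding into some $B_0\in\uk$, translate quasi-algebraic $\ms L^*$-sentences into basic universal $\ms L$-sentences over regular structures, and conclude by preservation of universal sentences under existentially closed embeddings. The only difference is that you explicitly justify the step that $A$, being existentially closed among models of $\t$, is also existentially closed for embeddings into models of $\tu$, which the paper asserts without argument.
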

 \begin{proof}
 Let $f:A^*\to B$ be an $\ms L^*$-homomorphism in $\uk^*$ : $f$ is an embedding because $A^*,B\mo\emptyset^*$. Let $\chi=\forall\ov x\ \bigwedge\Phi(\ov x,\ov a)\imp\psi(\ov x,\ov a)$
  be a quasi-algebraic sentence of the diagram $Th_W(A^*|A^*)$. In every regular $\ms L^*$-extension of $A^*$, this sentence is equivalent
 (as any universal $\ms L^*(A)$-sentence) to a universal $\ms L(A)$ sentence $\chi'$. As $A$ is existentially closed in $\uk$,
 $f:A\into B_0$ is existentially closed, so $(B_0,f)\mo\chi'$ and then $(B,f)\mo\chi$, because $B$ is regular. The morphism $f$ is thus
 $\ms L^*$-geometrically closed, and so is $A^*$ in $\uk^*$.
    \end{proof}
    
The reciprocal of this proposition will lead us to another understanding of model-completeness and geometric completeness : in some sense,
 this last notion is a generalisation of model-completeness, parallel and strongly connected to positive model-completeness. This comes
  naturally because atomic Morleyisation produces solely strict theories.
 
\begin{lem}\label{STEX}
 The atomic Morleyisation $\t^*$ of $\t$ is a strict theory in $\ms L^*$.
\end{lem}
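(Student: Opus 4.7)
The plan is to derive a contradiction between the universal axioms of $\t^*$ and the satisfaction profile of the trivial structure $\mb 1$ viewed as an $\ms L^*$-structure. Recall from the remark just before the statement that the Morleyisation axioms $\forall \ov x\ [\neg R(\ov x)\iff R^*(\ov x)]$ are equivalent to the pair of basic universal sentences $\forall \ov x\ [R(\ov x)\wedge R^*(\ov x)\imp\bot]$ and $\forall \ov x\ [\top\imp R(\ov x)\vee R^*(\ov x)]$. In particular, every model of $\t^*$ satisfies the first of these, and it is this sentence that will be put into tension with the trivial structure.

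Now view $\mb 1$ as an $\ms L^*$-structure: by the very definition of a trivial structure, every relation symbol of $\ms L^*$ is interpreted fully on its single element $e$. Hence, for any relation symbol $R$ of $\ms L$ (and the corresponding $R^*$ of $\ms L^*$), the tuple $\ov e=(e,\ldots,e)$ of the appropriate arity satisfies both $R(\ov e)$ and $R^*(\ov e)$ in $\mb 1$.

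Suppose now, toward a contradiction, that $\mb 1\models (\t^*)_\forall$. Then there exists an embedding $f:\mb 1\into A$ of $\ms L^*$-structures with $A\models \t^*$. Since embeddings preserve (and reflect) atomic sentences, $A\models R(f\ov e)\wedge R^*(f\ov e)$. But $A$, being a model of $\t^*$, satisfies $\forall\ov x\ [R(\ov x)\wedge R^*(\ov x)\imp\bot]$, which is directly contradicted by the tuple $f\ov e$. Consequently $\mb 1\not\models(\t^*)_\forall$, and by the equivalent form $\mb 1\not\models(\t^*)_u$ noted in Definition~\ref{strict}, the theory $\t^*$ is strict in $\ms L^*$.

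The only potential obstacle is the degenerate case where $\ms L$ contains no relation symbol at all: then $\ms L^*=\ms L$ and $\t^*=\t$, so the conclusion would not follow from the Morleyisation axioms alone. In the setting of this section (where atomic Morleyisation is introduced precisely to convert embeddings into homomorphisms by naming the complements of relations), this case is essentially void; if desired, one treats the equality predicate as an atomic relation for the purposes of the Morleyisation, so that $\mb 1$ satisfies both $e=e$ and $e=^*\!e$ and the same contradiction arises. Either reading makes the argument above apply uniformly.
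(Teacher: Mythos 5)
Your proof is correct and follows essentially the same route as the paper's: the trivial structure interprets every relation fully, so any embedding of $\mb 1$ into a model of $\t^*$ would force some tuple to satisfy both $R$ and $R^*$, contradicting the Morleyisation axioms. Your closing remark about the degenerate case of a purely equational $\ms L$ (where $\ms L^*=\ms L$ and the argument has no relation symbol to work with) identifies a genuine implicit assumption that the paper's proof also makes without comment, and your suggested reading is a reasonable way to handle it.
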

\begin{proof}
 Let $R$ be a relation symbol of $\ms L$ of arity $n$. If $A\mo\t^*$, then $R^{*A}=A^n-R^A$. If the trivial structure $\mb 1$ in $\ms L^*$
  embeds in $A$, if we call $0$ its only element, then we must have $A\mo R(0,\ldots,0)\wedge R^*(0,\ldots,0)$, because in $\mb 1$ all
   interpretations of relation symbols are full. This contradicts the regularity of $A$, so $\mb 1$ does not embed in $A$ and $\t^*$ is strict.
\end{proof}

\begin{thm}
 Let $\t$ be a theory and $A$ an object of $\uk=Mod(\tu)$. $A$ is existentially closed in $\uk$ if and only if
 $A^*$ is geometrically closed in $\uk^*=Mod(\t^*)$.
\end{thm}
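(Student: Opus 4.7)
The plan is to verify the two directions separately. The forward implication is essentially Proposition~\ref{EXM}: inspection of its proof shows that what is really used is ``$A$ is existentially closed in $\uk$'', so the hypothesis here matches exactly. I will focus on the converse.

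So assume $A^*$ is geometrically closed in $\uk^*$ and let $f:A\into B$ be an arbitrary embedding in $\uk$. First I would lift $f$ to a map $f^*:A^*\to B^*$ of $\ms L^*$-structures. Regularity of the expansions $A^*$ and $B^*$, together with $f$ being an $\ms L$-embedding, makes $f^*$ an embedding of $\ms L^*$-structures (the $R^*$-interpretations are the complements of the $R$-interpretations on both sides). Moreover, Lemma~\ref{USTAR} gives $\uk^*=Mod(\tu)\cap Mod(\emptyset^*)$, and $B^*$ satisfies both components because $B\mo\tu$ and $B^*$ is regular; hence $B^*\in\uk^*$, and similarly $A^*\in\uk^*$. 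By hypothesis, $f^*$ is then geometrically closed in $\uk^*$.

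Next I invoke Lemma~\ref{STEX} to know that $\t^*$ is strict, whence $\mb 1\not\mo(\t^*)_u=\tu^*$; since $\uk^*$ is universally axiomatised, no object of $\uk^*$ contains $\mb 1$ as a substructure, and in particular $\mb 1\not\into B^*$. Applying Lemma~\ref{GCIM} upgrades the geometrically closed morphism $f^*$ to an immersion in $\ms L^*$.

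The last step, which I expect to be the only slightly delicate piece, converts the $\ms L^*$-immersion back to $\ms L$-existential closedness. Any existential $\ms L$-formula is equivalent to a disjunction of primitive formulas $\exists\ov y\ [\bigwedge\Phi(\ov x,\ov y)\wedge\bigwedge\neg\Psi(\ov x,\ov y)]$ with $\Phi,\Psi$ finite sets of atomic formulas; replacing each negated atomic subformula $\neg R(\ov t)$ by $R^*(\ov t)$ produces a coherent $\ms L^*$-formula $\phi^*$, and in any regular $\ms L^*$-structure $\phi$ and $\phi^*$ have the same truth value. Hence, for any existential $\ms L$-sentence $\phi(\ov a)$ with parameters in $A$ satisfied by $f\ov a$ in $B$, one gets $B^*\mo\phi^*(f\ov a)$, then by immersion $A^*\mo\phi^*(\ov a)$, and by regularity of $A^*$ finally $A\mo\phi(\ov a)$. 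This is exactly the definition of $f$ being existentially closed, completing the proof of the converse.
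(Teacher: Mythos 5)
Your proposal is correct and follows essentially the same route as the paper: forward direction via Proposition~\ref{EXM}, converse via Lemmas~\ref{USTAR}, \ref{STEX} and \ref{GCIM} to get that $f:A^*\to B^*$ is an $\ms L^*$-immersion, hence an $\ms L$-existentially closed embedding. You merely spell out in more detail the two steps the paper leaves implicit (that $B^*\in\uk^*$, and the translation of existential $\ms L$-formulas into coherent $\ms L^*$-formulas over regular structures), and those details are right.
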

\begin{proof}
 The direct part is a restatement of Proposition~\ref{EXM}, so we only need to prove the converse. Suppose that $A^*$ is geometrically closed in $\uk^*$.
  If $f:A\into B$ is an extension in $\uk$, $\mb 1$ does not embed in $B^*$ by Lemmas~\ref{USTAR} and~\ref{STEX}. As $f:A^*\to B^*$ is
  geometrically closed, by Lemma~\ref{GCIM} it is an $\ms L^*$-immersion, i.e. an $\ms L$-existentially closed embedding.  
\end{proof}

As $\tu^*$ is stronger than $(\t^*)_u$, the property of the theorem is also equivalent to ``$A^*$ is a positively
existentially closed model of $(\t^*)_u$''. As a corollary, we sum up the following characterisations of model-complete
theories (note that the preceding theorem is stronger).

\begin{cor}\label{CARMC}
For any first-order theory $\t$, the following assertions are equivalent :
 \begin{enumerate}
  \item $\t$ is model-complete
  \item $\t^*$ is positively model-complete
  \item $\t^*$ is geometrically complete.
 \end{enumerate}
\end{cor}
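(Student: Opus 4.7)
The plan is to assemble the corollary from three ingredients already established earlier: the Remark immediately following the definition of atomic Morleyisation (which gives the equivalence of (1) with (2) in one stroke), Lemma~\ref{STEX} (which asserts that $\t^*$ is always strict), and Theorem~\ref{PMCGC} (which states that for a strict theory, positive model-completeness and geometric completeness coincide). The corollary is essentially the concatenation of these three facts, so no new argument beyond their combination is needed.

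Concretely, for (1) $\iff$ (2) I would simply cite the Remark that a theory $\t$ in $\ms L$ is model-complete if and only if its atomic Morleyisation $\t^*$ is positively model-complete; the point is that after adding complements $R^*$ for every relation $R$, an $\ms L$-embedding $A \hookrightarrow B$ is the same data as an $\ms L^*$-homomorphism $A^* \to B^*$, while existential $\ms L$-formulas correspond to coherent $\ms L^*$-formulas via the rewriting $\neg R(\ov x) \leftrightsquigarrow R^*(\ov x)$, so being existentially closed as an $\ms L$-embedding translates literally to being an immersion as an $\ms L^*$-homomorphism.

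For (2) $\iff$ (3) I would apply Theorem~\ref{PMCGC} to the theory $\t^*$ itself: by Lemma~\ref{STEX} the atomic Morleyisation is strict (the trivial structure cannot satisfy both $R(\ov 0)$ and $R^*(\ov 0)$, hence cannot embed into any model of $\t^*$), so the hypothesis of Theorem~\ref{PMCGC} is satisfied, and the theorem delivers the equivalence between positive model-completeness of $\t^*$ and its geometric completeness.

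There is no real obstacle here, since all the work has already been done in Lemmas~\ref{PMCCOH}, \ref{POSEL}, \ref{GCIM}, \ref{STEX}, in Theorem~\ref{POSNS}/\ref{PMCGC}, and in the Remark on the tie between model-completeness and positive model-completeness of the Morleyisation. One could alternatively prove (1) $\iff$ (3) directly by invoking the theorem immediately preceding the corollary, but at the price of an additional verification that ``existentially closed in $\uk$'' agrees with ``existentially closed in $\K$'' for models of $\t$ (and similarly on the $\ms L^*$ side), which is a routine consequence of the fact that any $B \in \uk$ embeds into some $C \in \K$ and existential formulas are preserved by embeddings. The route via the Remark plus Theorem~\ref{PMCGC} is shorter and more transparent, and this is what I would write up.
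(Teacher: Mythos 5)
Your proposal is correct: all three ingredients you invoke (the Remark that $\t$ is model-complete iff $\t^*$ is positively model-complete, Lemma~\ref{STEX} on strictness of $\t^*$, and Theorem~\ref{PMCGC}) are established in the paper, and their concatenation does yield the corollary. The paper itself gives no explicit proof, but its phrasing (``As a corollary, we sum up\ldots\ note that the preceding theorem is stronger'') indicates that the intended derivation of the link between (1) and (3) passes through the theorem immediately preceding the corollary, i.e.\ the pointwise statement that $A$ is existentially closed in $\uk$ iff $A^*$ is geometrically closed in $\uk^*$ --- which is exactly the ``alternative route'' you mention and set aside. The trade-off is the one you identify: the paper's route requires the small bookkeeping step that existential closedness in $\K$ and in $\uk$ agree for models of $\t$ (and the corresponding identification on the $\ms L^*$ side, where one must also note that geometric closedness in $\uk^*$ and in $\W_{\K^*}$ coincide because every homomorphism into $\mb 1$ is trivially geometrically closed), whereas your route ((1)$\iff$(2) by the Remark, (2)$\iff$(3) by Theorem~\ref{PMCGC} applied to the strict theory $\t^*$) stays entirely at the level of theories and avoids that verification, at the cost of not exhibiting the stronger structure-by-structure correspondence that the paper's preceding theorem records. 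Either write-up is acceptable; yours is the shorter and self-contained one.
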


This raises the questions of the relations between atomic Morleyisation and prime and strongly prime $a$-types, and in what
 measure it is possible to identify geometric completeness and weak geometric completeness. For this purpose, let $\t$ be any theory in $\ms L$ and $A$ an $\ms L$-structure. Suppose that $\mf p$ is a strongly prime $a$-type of $A[\ov x]$
 and consider the homomorphisms $j:A\to A[\ov x]$ and $f:A[\ov x]\to A[\ov x]/\mf p$, the composition of which we call $g$.
 As before,
  if $A^*$ denotes the canonical expansion of $A$, then $A^*[\ov x]$ is an $\ms L^*$-expansion of $A[\ov x]$, though
   not the canonical one. However, it should be obvious that $j:A^*\to A^*[\ov x]$ is still an $\ms L^*$-homomorphism (in fact, even an embedding).
    Call $B$ the structure $A[\ov x]/\mf p$ : the map $f$ defines an $\ms L^*$-homomorphism $A^*[\ov x]\to B^*$. Indeed,
    $f$ is already an $\ms L$-homomorphism and if $R$ is a relational $n$-ary symbol in $\ms L$ and $t_1,\ldots,t_n$
     are $n$ elements of $A^*[\ov x]$, i.e. terms with parameters in $A\sqcup \ov x$ such that $A^*[\ov x]\mo R^*(t_1,\ldots,t_n)$,
      then there must be $\ov a=(a_1,\ldots,a_n)$ in $A$ such that $t_i=j(a_i)$ for every $i$, because the relational interpretation of
      $R^*$ in $A^*[\ov x]$ is empty ``outside $j(A)$''. Then we have $A^*\mo R^*(\ov a)$, so $A\not\mo R(\ov a)$. As $g$ is
      an $\ms L$-embedding (because $\mf p$ is strongly prime), we have $(B,g)\not\mo R(\ov a)$, i.e. $(B^*,f)\mo R^*(\ov t)$.\\
We are now interested in the $a$-type $\mf P$ of $f$ in $\ms L^*$ : certainly we have $\mf p\subset \mf P$.
Suppose that $\phi=R(t_1,\ldots,t_n)$ is an atomic $\ms L(A\sqcup \ov x)$-sentence not in $\mf p$. This means that
$(B,f)\not\mo\phi$, so $(B^*,f)\mo\phi^*$ (if $\phi^*=R^*(t_1,\ldots,t_n)$) and $\phi^*\in \mf P$. Reciprocally,
 if $\phi$ is an atomic $\ms L^*(A\sqcup \ov x)$-sentence in $\mf P$, either $\phi$ is atomic for $\ms L$ and then as
  $(B^*,f)\mo\phi$, we have $(B,f)\mo\phi$, so $\phi\in \mf p$; or $\phi=\psi^*$ with $\psi$ atomic for $\ms L$ and
   then $(B^*,f)\not\mo\psi$ ($\psi^*$ is the obvious equivalent of $\neg\phi$ in $\ms L^*(A)$ modulo $\emptyset^*$).
In short, if we call $\mf p^*$ the set of formulas $\phi*$ such that $\phi$ is atomic in $\ms L(A\sqcup \ov x)$ and $\phi\notin\mf p$, then
 we have $\mf P=\mf p\cup \mf p^*$.\\
The same argument shows that if $\mf P$ is a prime $a$-type of $A^*[\ov x]$ in the language $\ms L^*(A\sqcup \ov x)$,
 $B$ is the $\ms L$-reduct of $A^*[\ov x]/\mf P$ and $\mf p$ is the $a$-type of $A[\ov x]\to B$ in $\ms L(A\sqcup \ov x)$, then we have
 $\mf P=\mf p\cup \mf p^*$ with the same conventions. This in particular means that $(A[\ov x]/\mf p)^*=A^*[\ov x]/\mf P$,
  because the two members are models of $\tu^*$ whose $\ms L$-reducts are equal. This will be completed in
  the next proposition.

\begin{lem}\label{CANEXP}
 If $A$ and $B$ are $\ms L$-structures and $f:A^*\to B^*$ is an $\ms L^*$-ho\-mo\-mor\-phism of their canonical $\ms L^*$-expansions,
  then $f$ is an embedding.
\end{lem}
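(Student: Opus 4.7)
The plan is to verify the missing half of the embedding property. Since $\ms L\subseteq \ms L^*$, the $\ms L^*$-homomorphism $f$ is \emph{a fortiori} an $\ms L$-homomorphism and so already preserves atomic $\ms L$-formulas; what remains is to prove that $f$ \emph{reflects} atomic $\ms L$-formulas, which together with preservation is exactly the embedding property stated earlier.

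The key tool is that $A^*$ and $B^*$, being canonical expansions, both satisfy the axioms of $\emptyset^*$, i.e.\ $\forall \ov x\,[\neg R(\ov x)\iff R^*(\ov x)]$ for every relation symbol $R$ of $\ms L$. Reflection then follows by contraposition. Consider an atomic $\ms L$-formula $\phi(\ov x)=R(\ov t(\ov x))$ with $R$ a relation symbol of $\ms L$ and $\ov t$ a tuple of $\ms L$-terms, and let $\ov a$ be a tuple from $A$. Suppose $A\not\mo R(\ov t^A(\ov a))$. Canonicity of $A^*$ promotes this negative statement to the positive one $A^*\mo R^*(\ov t^A(\ov a))$, which is atomic for $\ms L^*$; since $f$ is an $\ms L^*$-homomorphism and term evaluation commutes with homomorphisms, we deduce $B^*\mo R^*(\ov t^B(f\ov a))$. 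Applying the defining axiom for $R^*$ in the canonical expansion $B^*$, this forces $B\not\mo R(\ov t^B(f\ov a))$, i.e.\ $B\not\mo \phi(f\ov a)$. Hence $B\mo\phi(f\ov a)\imp A\mo\phi(\ov a)$, which is reflection.

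The single delicate point concerns atomic formulas of the form $t_1=t_2$, for which the Morleyisation does not literally supply an $=^*$ symbol. Here one uses the natural convention of the positive setting that equality is likewise Morleyised, with $\neq^*$ interpreted as genuine inequality on canonical expansions; the argument above then runs unchanged. Apart from this bookkeeping, the only real content of the proof is the duality $R\leftrightarrow R^*$ under canonical interpretation combined with the fact that $f$ preserves the complementary symbol, so the whole argument is short once the definitions are unfolded.
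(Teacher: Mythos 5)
Your proof is correct and follows essentially the same route as the paper's, which runs the identical $R\leftrightarrow R^*$ duality in the contrapositive direction: from $B^*\mo R(f\ov a)$ one gets $B^*\not\mo R^*(f\ov a)$ by regularity of $B^*$, hence $A^*\not\mo R^*(\ov a)$ since $f$ preserves $R^*$, hence $A^*\mo R(\ov a)$. Your closing remark on equality is well taken: the paper's proof treats only the relation symbols of $\ms L$ and is silent on reflecting atomic sentences $t_1=t_2$ (i.e.\ injectivity of $f$), which does require the convention you describe (or an added symbol for inequality) for the lemma to hold as stated, e.g.\ when $\ms L$ is purely equational.
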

\begin{proof}
 If $R$ is an $n$-ary relation symbol of $\ms L$ and $B^*\mo R(f\ov a)$, then $B^*\not\mo R^*(f\ov a)$, so $A^*\not\mo R^*(\ov a)$
  because $f$ is a homomorphism. This in turn means that $A^*\mo R(\ov a)$, so $f$ is an embedding.
\end{proof}
 
\begin{prop}
If $\pi$ is an $a$-type of $A[\ov x]$, then the operation $\mf p\mapsto \mf P$ defines a bijection between strongly prime $a$-types
 of $A[\ov x]$ containing $\pi$ in $\ms L$ and prime $a$-types (in $\ms L^*$) of $A^*[\ov x]$ containing $\pi$.
\end{prop}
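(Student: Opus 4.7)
The plan is to verify that the two operations sketched in the paragraph preceding the proposition are mutually inverse bijections. The forward map is $\mf p\mapsto \mf P:=\mf p\cup\mf p^*$, where $\mf p^*$ is the set of all $\phi^*$ for $\phi$ an atomic $\ms L(A\sqcup\ov x)$-sentence not in $\mf p$. The backward map is $\mf P\mapsto \mf p:=\mf P\cap\mathrm{At}(\ms L(A\sqcup\ov x))$, which (as the preamble shows) coincides with the $a$-type in $\ms L$ of the composite $A[\ov x]\to (A^*[\ov x]/\mf P)_0$. The containment of $\pi$ is preserved in both directions, because $\pi$ consists of $\ms L$-atomic sentences and thus $\pi\subset\mf p$ if and only if $\pi\subset\mf P$.

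First I would verify the forward map is well defined. Starting from a strongly prime $\mf p$, the calculation preceding the proposition exhibits $\mf P$ as the $a$-type in $\ms L^*$ of the $\ms L^*$-homomorphism $f:A^*[\ov x]\to B^*$, where $B=A[\ov x]/\mf p$; strong primality ensures that the canonical $A\to B$ is an embedding, so $B\mo\tu$ and hence $B^*\mo\tu^*$ by Lemma~\ref{USTAR}. Thus $\mf P$ is a prime $a$-type of $A^*[\ov x]$ in $\ms L^*$.

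Next I would check the backward map. Let $C^*:=A^*[\ov x]/\mf P$, a model of $\tu^*$, with $\ms L$-reduct $C$. The canonical quotient restricts to an $\ms L$-homomorphism $f:A[\ov x]\to C$, and the composite $A\to C$ is the $\ms L$-reduct of $A^*\to C^*$. Since $C^*$ is regular, it is the canonical $\ms L^*$-expansion of $C$, and Lemma~\ref{CANEXP} applies to give that this $\ms L^*$-map is an $\ms L$-embedding. Hence $\mf p:=\mf P\cap\mathrm{At}(\ms L(A\sqcup\ov x))$ is strongly prime in $\ms L$.

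Finally I would verify the two maps are mutually inverse, which reduces to the identification $(A[\ov x]/\mf p)^*=A^*[\ov x]/\mf P$ for any corresponding pair $(\mf p,\mf P)$. Both members are regular $\ms L^*$-structures in $\tu^*$, and their $\ms L$-reducts coincide by construction of the backward map; regularity forces the interpretations of the $R^*$ to coincide as well, yielding the claimed equality. Running $\mf p\mapsto\mf P\mapsto\mf p'$ yields $\mf p'=\mf p$ via the $\ms L$-reduct, and running $\mf P\mapsto\mf p\mapsto\mf P'$ yields $\mf P'=\mf p\cup\mf p^*=\mf P$, since a sentence $\phi^*$ lies in $\mf P$ exactly when $\phi$ fails in the common quotient, that is, when $\phi\notin\mf p$. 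The only delicate point is this identification of the two $\ms L^*$-expansions; once that is nailed down via regularity plus Lemma~\ref{CANEXP}, the rest is bookkeeping.
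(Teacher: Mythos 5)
Your proposal is correct and follows essentially the same route as the paper: both rely on the identification $\mf P=\mf p\cup\mf p^*$ worked out in the discussion preceding the proposition, use the fact that $A^*[\ov x]/\mf P$ is regular (hence the canonical expansion of its $\ms L$-reduct) together with Lemma~\ref{CANEXP} to recover strong primality in the backward direction, and reduce everything to the equality $(A[\ov x]/\mf p)^*=A^*[\ov x]/\mf P$. The only difference is cosmetic — you phrase the conclusion as two mutually inverse maps where the paper argues injectivity and surjectivity separately.
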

\begin{proof}
 With the same notations, let $B=A[\ov x]/\mf p$. We have $B\mo\tu$, so $B^*\mo\tu^*$, and $\mf P$ is the $a$-type in $\ms L^*$ of the
 $\ms L^*$-homomorphism $A^*[\ov x]\to B^*$. This means that $\mf P$ is prime and an $a$-type $\pi$ of $A[\ov x]$ is contained in
 $\mf p$ if and only if $\pi\subset\mf P$.\\
If the two strongly prime $a$-types $\mf p$ and $\mf q$ are different, then obviously we get two different primes $\mf P$ and
$\mf Q$ and $\mf p\mapsto \mf P$ is injective. If $\mf P$ is a prime $a$-type of $A^*[\ov x]$, let $\mf p$ be the $a$-type in
$\ms L$ of the homomorphism $A^*[\ov x]\onto A^*[\ov x]/\mf P$. We have $B^*=(A[\ov x]/\mf p)^*=A^*[\ov x]/\mf P$ by the preceding
discussion ($tp_{a,\ms L^*}(f)=\mf P$) so clearly we have $\mf p\cup \mf p^*=\mf P$.
  Again, as $A^*[\ov x]/\mf P$ is the canonical expansion of $A[\ov x]/\mf p$, the composite morphism
  $A^*\to A^*[\ov x] \to A^*[\ov x]/\mf P$ is an $\ms L^*$-embedding by the preceding lemma, whereby
  $\mf p$ is strongly prime. Finally, the map $\mf p\mapsto \mf P$ is a bijection.
\end{proof}

\begin{thm}
For an $\ms L$-theory $\t$, let $\W_{\K^*}= Mod((\t^*)_W)$ be the quasivariety generated in $\ms L^*$ by $K^*=Mod(\t^*)$. A structure $A$ is in $\uk$ if and only if $A^*$ is in $\W_{\K^*}$.
\end{thm}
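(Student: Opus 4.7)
The plan is to invoke Lemma~\ref{USTAR} in order to reduce the theorem to the equivalence $A^* \in \W_{\K^*} \iff A^* \in \uk^*$ for the canonical $\ms L^*$-expansion $A^*$. Since $A^*$ is automatically a model of $\emptyset^*$ and $\uk^* = Mod((\tu)^*)$, the right-hand side is nothing else than $A \mo \tu$, that is, $A \in \uk$.

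The forward direction is easy. Given an embedding $f : A \into B$ with $B \mo \t$, the map $f$ preserves and reflects each atomic $\ms L$-formula, hence also their complements, and therefore lifts to an $\ms L^*$-embedding $A^* \into B^*$. Since $B^* \mo \t^*$ and $\W_{\K^*}$ is closed under isomorphic copies and substructures, we conclude $A^* \in \W_{\K^*}$.

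For the converse, suppose $A^* \mo (\t^*)_W$ and let $\phi = \forall \ov x\,[\bigwedge \Phi(\ov x) \to \bigvee_{j=1}^k \psi_j(\ov x)]$ be an arbitrary element of $\tu$. For every $j$ form the quasi-algebraic $\ms L^*$-sentence
\[
\chi_j \;=\; \forall \ov x\,\bigl[\bigwedge \Phi(\ov x) \wedge \bigwedge_{j' \neq j} \psi_{j'}^*(\ov x) \to \psi_j(\ov x)\bigr],
\]
where $\psi_{j'}^*$ denotes the atomic $\ms L^*$-formula equivalent to $\neg \psi_{j'}$ modulo $\emptyset^*$ (i.e. $R^*(\ov s)$ when $\psi_{j'}=R(\ov s)$). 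Then $\chi_j$ is a consequence of $\t^*$: in any $N^* \mo \t^*$ the premise forces $N$ to satisfy $\bigwedge \Phi$ together with $\neg \psi_{j'}$ for $j' \neq j$, so $\phi$ applied in $N \mo \t$ yields $\psi_j$. Hence $A^* \mo \chi_j$ for every $j$. If some $\ov a$ in $A$ witnessed the failure of $\phi$, the canonicity of $A^*$ would make the premise of $\chi_1$ true at $\ov a$, forcing $A \mo \psi_1(\ov a)$, against $A \not\mo \psi_1(\ov a)$; so $A \mo \phi$, whence $A \mo \tu$ and $A \in \uk$.

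The main technical point in this scheme is the decomposition of an arbitrary universal $\ms L^*$-sentence into a conjunction of quasi-algebraic ones modulo $\emptyset^*$. It relies crucially on the atomic Morleyisation supplying an atomic complement for every basic atomic formula of $\ms L$ (equality included, treated as a binary predicate on equal footing with the other relation symbols), so that each $\psi_{j'}^*$ is genuinely atomic in $\ms L^*$ and each $\chi_j$ genuinely quasi-algebraic. With that in hand, the equivalence $A^* \in \W_{\K^*} \iff A^* \in \uk^*$ for regular $A^*$ reads as the assertion that the canonical expansion cannot distinguish the quasivariety from the universal class in $\ms L^*$.
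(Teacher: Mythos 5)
Your proof takes a genuinely different route from the paper's. The paper argues semantically: by Lemma~\ref{REPQV} it embeds $A^*$ into a product $\prod_{i\in I}B_i$ of models of $\t^*$, observes via Lemma~\ref{CANEXP} that each composite $p_i\circ f$ is an $\ms L^*$-embedding because both ends are regular, and disposes of the case $I=\emptyset$ by noting that $A^*$ cannot embed into $\mb 1$. You argue syntactically, trading the disjuncts of a basic universal $\ms L$-sentence for starred atomic premises so as to realise it as a conjunction of quasi-algebraic $\ms L^*$-sentences lying in $(\t^*)_W$. For sentences with a nonempty conclusion your verification is correct, and the computation is instructive: it makes explicit why Morleyisation collapses the distance between the universal class and the quasivariety. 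Your insistence that equality be Morleyised alongside the other relation symbols is also the right convention; it is tacitly needed in Lemma~\ref{CANEXP} as well, to get injectivity.

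There is, however, a gap corresponding exactly to the paper's case $I=\emptyset$. The set $\Psi$ in a basic universal formula may be empty, so $\tu$ contains $h$-universal consequences $\forall\ov x\ \neg\bigwedge\Phi(\ov x)$; these are indispensable for the equivalence between $A\in\uk$ and the embeddability of $A$ into a model of $\t$ (for $\t$ the theory of fields, $\neg(0=1)$ is what keeps the trivial ring out of $\uk$). For $k=0$ your scheme produces no sentence $\chi_j$ at all, so what you have actually shown is only that $A$ satisfies the strict Horn part of $\tu$ --- a condition the trivial structure always meets, hence strictly weaker than $A\in\uk$ whenever $\t$ is strict. This missing case is where the regularity of $A^*$ must do real work, as in Lemma~\ref{GCIM}: for such a consequence, the $\ms L^*$-sentence $\forall\ov x,y\ [\bigwedge\Phi(\ov x)\imp y=^*y]$ is quasi-algebraic and vacuously a consequence of $\t^*$, hence true in $A^*$; since $A^*$ is a canonical expansion, $y=^*y$ holds at no point, so $\bigwedge\Phi$ has no solution in $A$. (Equivalently, rewrite $\neg\bigwedge\Phi$ as $\bigvee_i\phi_i^*$ and apply your own decomposition to that nonempty disjunction.) With this case added, your argument is complete.
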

\begin{proof}
First, if $A$ is in $\uk$, then $A^*$ is in $\uk^*$, which is a subclass of $\W_{\K^*}$.\\
Reciprocally, let $A$ be an $\ms L$-structure such that $A^*$ is a model of $(\t^*)_W$. By Lemma~\ref{REPQV}, there exists a family
 $(B_i)_{i\in I}$ of models of $\t^*$ and an $\ms L^*$-embedding $f:A^*\into \prod_{i\in I} B_i$ : call $p_i$ the $i^{th}$ projection of
 the product. If $I$ is non-empty and $i\in I$, then $p_i\circ f$ is an $\ms L^*$-homomorphism between regular $\ms L^*$-structures
  (as $B_i\mo\emptyset^*$), so by Lemma~\ref{CANEXP} it is an $\ms L^*$-embedding. In other words, $A^*$ embeds in $B_i$, so it is in
  $\uk^*$, which in turn means that $A\in\uk$. If now $I=\emptyset$, then $A^*$ embeds in the trivial structure $\mb 1$ for $\ms L^*$.
  This is impossible, because $A^*$ is regular.
\end{proof}
Remark that this does not mean that the quasivariety $\wk^*$ generated by $\K^*=Mod(\t^*)$ is $\uk^*$, because $\mb 1$ is in $\W_{\K^*}$,
 whereas it is certainly not in $\uk^*$. In fact, the theorem shows that $\W_{\K^*}$ is essentially $\uk^*\cup\{\mb 1\}$.
 
\section{Some examples}\label{APP}

  \subsection{Theories of fields and affine algebraic geometry}
The archetypical example falling under Theorem~\ref{PMCGC} is the case of theories of pure fields in the
language of rings. If $\t$ is such a theory, then as every homomorphism of fields is an embedding, model-completeness
  and positive model-completeness are equivalent. But as no field contains the trivial ring, then the geometric completeness
   of $Mod(\t)$ is also equivalent to its positive model-completeness. Hence, in theories of pure fields (or even fields
    with an additional operational structure), the three notions coïncide (beware that this is not Corollary~\ref{CARMC}).\\
    This has several implications. First, if we know that an analog of Hilbert's Nullstellensatz holds for the models
     of such a theory $\t$, then we know that $\t$ is (positively) model-complete, a property which can be nice to work
      with in model theory. Second, if we know that $\t$ is model-complete, then this gives us automatically
      a relative Nullstellensatz for the models of $\t$, as well as a first theory of affine algebraic invariants,
       which by noetherianity of the models are finitely presented ``relatively'' reduced algebras. This joins
        in spirit the project of B.Plotkin of a ``universal algebraic geometry'', in somewhat a different perspective though
        (\cite{BP}). As model-complete theories of fields arise naturally in arithmetic in a continuous number
         (see \cite{MK}), it could be the subject of a treatment of its own, searching for generalisations of common
          results in complex and real affine algebraic geometry. This would however exceed the bounds of the present paper.

  \subsection{Group-based algebras}\label{GBA}
In section~\ref{VAR}, we introduced varieties of group-based algebras. It is possible to apply the results on geometric
 completeness to this situation and then generalise Hilbert's Nullstellensatz and the affine duality in its original
 ``geometric'' form. Let then $\ms L$ be an equational language and $\V=Mod(\t_0)$ be a variety of group-based algebras in $\ms L$. Let $\t$ be a theory of group-based
  algebras in $\ms L$, stronger than $\t_0$, i.e. such that $Mod(\t)=\K\subset \V$. The theory of $a$-types creates the bridge with
 the algebraic language.
\begin{lem}
 If $A$ is an algebra of $\V$, there is a bijection between closed $a$-types and ideals of $A$.
\end{lem}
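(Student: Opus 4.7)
The plan is to exhibit inverse maps $\Phi$ and $\Psi$ and check they are well-defined. In one direction, from an ideal $I$ of $A$ form the group-based-algebra quotient $A/I$ (whose operations are well-defined precisely because of the ideal conditions) together with the canonical projection $f_I \colon A \onto A/I$, and set $\Phi(I) := tp_a(f_I)$. This is automatically a closed $a$-type: if $D^+A \cup \Phi(I) \mo \phi$ for an atomic $\phi$, then $f_I \mo \phi$ since $f_I \mo D^+A$, so $\phi \in \Phi(I)$. Conversely, for a closed $a$-type $\pi$, set $\Psi(\pi) := \{a \in A : (a = e) \in \pi\}$, i.e.\ the group-theoretic kernel of the projection $f_\pi \colon A \to A/\pi$.

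The first substantive step is to verify that $\Psi(\pi)$ really satisfies the definition of ideal. Since $A$ is a group-based algebra, every instance of the group axioms and of each identity $F(e,\ldots,e) = e$ specialised to tuples from $A$ belongs to $D^+A$; combining this with closedness of $\pi$, routine derivations show that $\Psi(\pi)$ is a normal subgroup (for instance, $a,b \in \Psi(\pi)$ yields $(a * b = e * e = e) \in \pi$ and conjugation is handled similarly). The functional conditions are equally mechanical: for $\ov a \in \Psi(\pi)^n$ and $\ov b \in A^n$, the equations $F(\ov a) = F(e,\ldots,e) = e$ and $F(\ov a)^{-1} * F(\ov b)^{-1} * F(\ov a * \ov b) = F(\ov b)^{-1} * F(\ov b) = e$ lie in $\pi$, yielding both clauses of the ideal definition.

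For the mutual inversion, $\Psi \circ \Phi = \mathrm{id}$ is immediate: $a \in \Psi(\Phi(I))$ iff $f_I(a) = e$ iff $a \in I$. The reverse composition $\Phi \circ \Psi = \mathrm{id}$ is the crux, and the key observation is that the equivalence relation $a \sim_\pi b \iff (a = b) \in \pi$ coincides with cosets modulo $\Psi(\pi)$: by closedness of $\pi$ combined with the group axioms present in $D^+A$, $(a = b) \in \pi$ is equivalent to $(a * b^{-1} = e) \in \pi$, equivalently $a * b^{-1} \in \Psi(\pi)$. This identifies $A/\pi$ with the group-based-algebra quotient $A/\Psi(\pi)$ (same cosets, same pointwise operations), so $\Phi(\Psi(\pi)) = tp_a(f_{\Psi(\pi)}) = tp_a(f_\pi) = \pi$ by closedness of $\pi$. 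The only really delicate point is this last equivalence between the atomic form $a = b$ and its group-theoretic rewriting $a * b^{-1} = e$: it hinges precisely on $A$ being group-based, so that the requisite instances of the group axioms sit in $D^+A$, and on $\pi$ being closed, so that atomic consequences of $D^+A \cup \pi$ remain in $\pi$. Without the group-based hypothesis there is no reason for the congruence $\sim_\pi$ to be determined by a single distinguished set of elements, and this is exactly what the ideal definition is engineered to encode.
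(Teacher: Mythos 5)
Your proof is correct and takes essentially the same route as the paper's: both identify a closed $a$-type with the algebraic kernel of its quotient projection and rest on the key equivalence $(a=b)\in\pi \iff a*b^{-1}\in\Psi(\pi)$, the paper phrasing the conclusion as injectivity plus surjectivity of one map where you exhibit the explicit two-sided inverse. You merely supply more detail (verification of the ideal axioms for $\Psi(\pi)$ and of closedness of $tp_a(f_I)$) than the paper, which asserts these points without proof.
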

\begin{proof}
 If $\pi$ is a closed $a$-type of $A$, then $A/\pi$ is a homomorphic image of $A$, so lies in $\V$. Define
 $\Phi(\pi)$ as the algebraic kernel of the canonical projection : this is an ideal of $A$. If $\pi\neq\pi'$ are two such
 closed $a$-types, then as $\ms L$ is equational there is an equation $a=b$
 which is in $\pi-\pi'$, for example. This means that $a*b^{-1}\in\Phi(\pi)-\Phi(\pi')$, so $\Phi$ is injective. If
 now $I$ is an ideal of $A$, let $\pi=tp_a(A\onto A/I)$. For every couple $(a,b)$ of elements of $A$, the equation $a=b$
 is in $\pi$ if and only if $a*b^{-1}$ is in $I$, so clearly $\pi$ is closed and $\Phi(\pi)=I$.
\end{proof}

This means that changing from $a$-types to ideals does not raise any problem for applying what precedes.
 In particular, we may define prime and radical ideals, as in Definition~\ref{PRAT}.
\begin{defi}
 Let $A$ be an algebra in $\V$ and $I$ be an ideal of $A$.
\begin{itemize}
 \item $I$ is \emph{prime} if the correspondong $a$-type is, i.e. if $A/I\mo\tu$
 \item $I$ is \emph{radical} if the corresponding $a$-type is, i.e. if $A/I\mo\tw$ (by Theorem~\ref{REP})
 \item the radical of $I$, noted $\trap I$, is the intersection of all prime ideals $\mf p$ of $A$ such that $I\subset \mf p$.
\end{itemize}
\end{defi}

The representation theorem (\ref{rep}) may then be stated in this language, and the previous results may be applied.
With the same notations as in algebraic geometry (cf section~\ref{CACF}), if $A$ is an algebra in $\V$,
 $X_1,\ldots,X_n$ are indeterminates and $I$ is an ideal of $A[X_1,\ldots,X_n]$, put $\ms Z_A(I)=\{\ov a\in A^n:\forall P
\in A[\ov X],\ P(\ov a)=e\}$ and if $S\subset A^n$, put $\ms I(S)=\{P\in A[\ov X]:\forall \ov a\in S,\ P(\ov a)=e\}$.

\begin{thm}
 If $\t$ is positively model-complete, then for every model $A$ of $\t$, for every natural number $n$ and every
 finitely generated ideal $I$ of $A[X_1,\ldots,X_n]$, one has $\ms I(\ms Z_A(I))=\trap I$.
\end{thm}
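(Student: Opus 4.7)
The plan is to reduce the group-theoretic statement to the abstract Nullstellensatz already proved, namely Theorem~\ref{POSNS} combined with Theorem~\ref{CHARGC}, and then translate the result from the language of $a$-types back into the language of ideals via the bijection lemma stated just above.

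First I would invoke Theorem~\ref{POSNS}: positive model-completeness implies geometric completeness, so every model $A$ of $\t$ is geometrically closed in $\wk$. By Theorem~\ref{CHARGC} this gives the ``abstract Nullstellensatz'' $tp_a^A(\pi(A))=\trap\pi$ for every finite $a$-type $\pi$ of $A[\ov x]$ (here $\ov x=X_1,\ldots,X_n$, and $A[\ov x]$ now denotes the algebra of terms, which in the group-based setting is the ``polynomial algebra'' over $A$).

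Next I would translate. Given a finitely generated ideal $I=(P_1,\ldots,P_k)$ of $A[\ov X]$, set $\pi:=\{P_1=e,\ldots,P_k=e\}$, a \emph{finite} $a$-type of $A[\ov X]$. The point is to verify three compatibilities: (i) $\pi(A)=\ms Z_A(I)$, which holds because the elements of $I$ are exactly those obtained from the $P_i$'s by the ideal operations (products, conjugates, commutators with functional symbols), and each of these operations preserves vanishing on a common rational point; (ii) $tp_a^A(\pi(A))$ corresponds under the bijection of the preceding lemma to $\ms I(\ms Z_A(I))$, because an equation $P=Q$ is in the $a$-type of a tuple iff $P\ast Q^{-1}$ vanishes there; (iii) $\trap\pi$ corresponds to $\trap I$, because the bijection $\pi\leftrightarrow\Phi(\pi)$ sends prime $a$-types to prime ideals and preserves inclusion, so intersections match. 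With these three translations the equality $tp_a^A(\pi(A))=\trap\pi$ becomes precisely $\ms I(\ms Z_A(I))=\trap I$.

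The routine but mildly delicate step is (iii), where one must check that the $a$-type generated (under $D^+A$) by $\{P_i=e\}$ is semantically equivalent, when $A[\ov X]\in\V$, to the $a$-type associated by the lemma to $I$; this is what ensures that the primes containing $\pi$ in the $a$-type sense are exactly the primes containing $I$ in the ideal sense, via the quotient $A[\ov X]/I$. The main conceptual obstacle is really only notational: keeping straight the passage between atomic equations $P=Q$ and their group-based translation $P\ast Q^{-1}=e$, and checking that the commutator conditions in the definition of ``ideal'' are exactly what is needed for $\Phi$ to land in genuine ideals. Once this dictionary is set up, the theorem follows by direct substitution from Theorem~\ref{CHARGC}.
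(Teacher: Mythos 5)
Your proposal is correct and is exactly the route the paper intends: the theorem is stated there without proof as an immediate application of Theorem~\ref{POSNS} (whose second formulation already gives $tp_a^A(\pi(A))=\trap\pi$ via Theorem~\ref{CHARGC}), translated through the bijection between closed $a$-types and ideals of a group-based algebra. Your three compatibility checks (i)--(iii) simply make explicit the dictionary the paper leaves implicit, and they are all sound.
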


This means that we have ``the same'' Nullstellensatz in the following situations.
\begin{ex}\label{APEX1}
 If $\V$ is the class of differential rings and $\K$ the class of differentially closed fields of characteristic $0$, in 
the language $\langle +,-,\times,d,0,1\rangle$ of differential fields, then $\V$ is a variety of group-based algebras,
 and the ``ideals'' of an algebra $A$ are its differential ideals. Prime ideals are the prime differential ideals containing
 no nonzero integer and radical ideals are the radical differential ideals. These are proper if they contain no nonzero integer.\\
We also mention the analogous case of the variety $\V$ of $D$-rings and the class $\K$ of Hasse closed fields with fixed positive
 characteristic (see~\cite{FB}).
\end{ex}
\begin{ex}\label{APEX2}
 If $\V$ is the variety of rings with an additional endomorphism $\sigma$ in the language $\langle +,-,\times,\sigma,0,1\rangle$,
 $\V$ consists of group-based algebras, for which the ideals of an algebra are the ideals closed under $\sigma$ (``$\sigma$-closed''
 in \cite{CK}). They are radical if and only if they are perfect as $\sigma$-ideals (extend the definition from difference
 rings to rings with an endomorphism), and they are prime if and only if they are as $\sigma$-ideals.
\end{ex}

In these examples the theorem of Ritt-Raudenbusch in differential fields (\cite{MA2}) and its analog in difference
 fields (\cite{CK}, 3.8 and \cite{RR}, 10), establish a noetherianity condition on radical ideals (in the sense of
 group-based algebras) in a finite number of variables on the models of the theory $\t$. It is possible
 to systematise this and go in the further direction of defining in general a ``Zariski topology'' in this situation.

\begin{defi}
Suppose that $\V=\V_\K$, the smallest variety containing $\K=Mod(\t)$.
\begin{itemize}
 \item An algebra $A$ of $\V$ is \emph{radically noetherian} if for every finite tuple $X_1,\ldots,X_n$ of
 indeterminates and every \emph{radical} ideal $I$ of $A[X_1,\ldots,X_n]$, $I$ is finitely generated as an ideal
 \item the theory $\t$ is \emph{radically notherian} if every model $A$ of $\t$ is radically notherian.
\end{itemize}
\end{defi}

\begin{rem}
 If $\K=\V_\K$ is already a variety, then radical noetherianity amounts to ``classical noetherianity''. For exemple, if
 $\ms L=\langle +,-,\times,0,1\rangle$ and $\t$ is the theory of commutative rings in $\ms L$, an ideal of a ring $A$
  is radically noetherian in this sense if and only if it is noetherian in the algebraic sense. The definition then
   takes care of this situation as well as of the more restrictive ones aforementioned.
\end{rem}
 For the remainder of this section, let $\V$ denote $\V_\K$; note that $\K\subset \wk\subset \V$, because
  $\V$ is a quasivariety. In section~\ref{AFFQU}, we established the duality between affine algebraic varieties in an
  object of $\wk$ and their coordinate algebras. 
In varieties of group-based algebras (the reader should be careful not to merge affine algebraic varieties with varieties of
 algebras), the coordinate algebras may be described as in ring theory : if $A$ is an object of $\V$, $V$ is an affine
   variety in $A$ defined by the ``polynomials'' $f_1(\ov X),\ldots,f_m(\ov X)\in A[X_1,\ldots,X_n]$, and if $I$ is the
   ideal generated in $A[X_1,\ldots,X_n]$ by the $f_j$'s, then $A[V]=A[\ov X]/\ms I(\ms Z_A(I))$.
   
\begin{thm}
 If $A$ is a radically noetherian and geometrically closed model of a theory $\t$ of group-based algebras in the equational language
 $\ms L$, then the coordinate algebra functor $\mb A$ is a duality between the category $Aff_A$ of affines algebraic
 varieties and the category of finitely \emph{generated} reduced $A$-algebras.
\end{thm}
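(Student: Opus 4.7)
The plan is to deduce the theorem from Theorem~\ref{DUAL}, which already gives a duality between $Aff_A$ and the category $\W_A^{fin}$ of \emph{finitely presented} reduced $A$-algebras (using only that $A$ is geometrically closed). Thus it suffices to show that, under the additional hypothesis of radical noetherianity, a reduced $A$-algebra is finitely presented if and only if it is finitely generated. The direction ``finitely presented $\imp$ finitely generated'' is immediate: a presentation $(\ov x,P\cup D^+A)$ with $\ov x$ finite already exhibits a finite generating set.

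For the converse, I would start with a finitely generated reduced $A$-algebra $B$ and pick a finite tuple $\ov x$ together with a surjective $A$-algebra morphism $A[\ov x]\onto B$. Writing $\pi$ for the $a$-type of this morphism, the isomorphism theorem~\ref{iso} gives $B\simeq A[\ov x]/\pi$; since $B\in \wk$, Theorem~\ref{rep} forces $\pi$ to be radical. Next I would use the bijection from the first lemma of section~\ref{GBA} to pass from $\pi$ to the corresponding ideal $I$ of $A[\ov x]$, which is radical because $\pi$ is. By the hypothesis of radical noetherianity applied to the finite tuple $\ov x$, the ideal $I$ is finitely generated, say $I=(f_1,\ldots,f_m)$. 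Translating back via the same bijection extended to primes/radicals, this means $\pi=\trap{\pi_0}$ for the finite $a$-type $\pi_0=\{f_1=e,\ldots,f_m=e\}$, because $\trap{\pi_0}$ is the intersection of all prime $a$-types containing $\pi_0$ and those correspond exactly to prime ideals containing $I_0=(f_1,\ldots,f_m)$, whose intersection is $\trap{I_0}=I$. Hence $B\simeq A[\ov x]/\trap{\pi_0}$, which by Corollary~\ref{PRESWK} is precisely a finitely presented reduced $A$-algebra.

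Having established $\W_A^{fg}=\W_A^{fin}$ as (full) subcategories of reduced $A$-algebras, Theorem~\ref{DUAL} delivers the claimed duality between $Aff_A$ and $\W_A^{fg}$.

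The only genuinely delicate point, rather than a serious obstacle, is the translation between ``radical ideal finitely generated as an ideal'' and ``radical $a$-type equal to the radical of a finite $a$-type''. This rests on the fact that the bijection between closed $a$-types and ideals restricts to bijections on the prime (resp.\ radical) data, which follows from the definition of prime/radical ideals in the group-based setting together with Theorem~\ref{rep}; once this is spelled out, the argument reduces to the elementary observation that $I=\trap I=\trap{(f_1,\ldots,f_m)}$ corresponds to $\pi=\trap{\pi_0}$.
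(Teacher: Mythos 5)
Your proposal is correct and follows essentially the same route as the paper: reduce to Theorem~\ref{DUAL} and then show that radical noetherianity makes every finitely generated reduced $A$-algebra finitely presented, via the radicality of the kernel ideal and its finite generation. You merely spell out more explicitly the $a$-type/ideal translation (which the paper's proof leaves implicit), but the argument is the same.
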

\begin{proof}
By theorem~\ref{DUAL}, as $A$ is geometrically closed in $\W=\wk$, all we need to show is that every reduced $A$-algebra of finite type is
finitely presented \emph{in $\W_A$}. Let $B\simeq A[X_1,\ldots,X_n]/I$ be such an algebra. As $B$ is reduced, one also has $B\in \V$, so
as $A$ is radically noetherian, the ideal $I$, which is radical, is generated by a finite number $f_1,\ldots,f_m$ of polynomials
 of $A[\ov X]$. But this means that $B$ is finitely presented as an $A$-algebra in $\W$ (and in fact, in $\V$).
\end{proof}
All this may be carried out in a greater generality, and in the context of infinitary first order logic. 
 For instance, the notion of ``ideal'' may be defined in every (infinitary) quasivariety from the notion
 of $a$-type, using again quasi-algebraic sentences (see \cite{JB}, chapter 6, section 5).

\end{document}